\newtheorem{theorem}{Theorem}[section]
\newtheorem{lemma}[theorem]{Lemma}
\newtheorem{prop}[theorem]{Proposition}
\newtheorem{cor}[theorem]{Corollary}
\newtheorem{conjecture}[theorem]{Conjecture}
\theoremstyle{remark}
\newtheorem{remark}[theorem]{Remark}
\numberwithin{equation}{section}
\newcommand{\abs}[1]{\lvert#1\rvert}
\newcommand{\ds}{\displaystyle}
\renewcommand{\neg}[1]{\overline{#1}}
\newcommand{\al}{\alpha}
\newcommand{\1}{\neg{1}}
\newcommand{\bw}{\ensuremath{{1\atop\neg{1}}}}
\newcommand{\bq}{\ensuremath{{1\atop ?}}}
\newcommand{\wq}{\ensuremath{{\neg{1}\atop ?}}}
\newcommand{\wb}{\ensuremath{{\neg{1}\atop 1}}}
\newcommand{\bb}{\ensuremath{{1\atop 1}}}
\newcommand{\ww}{\ensuremath{{\neg{1}\atop\neg{1}}}}
\newcommand{\xx}{\ensuremath{{0 \atop 0}}}
\newcommand{\dg}{\ensuremath{{1 \atop }|{ \atop \neg{1}}}}
\newcommand{\st}{\ensuremath{{* \atop *}}}
\newcommand*\dashedline{\rotatebox[origin=c]{90}{$\dabar@\dabar@\dabar@$}}
\newcommand{\aver}[1]{\langle #1 \rangle}
\newcommand{\catalan}[1]{C_{#1}}
\newcommand{\ballot}[2]{C^{#1}_{#2}}
\newcommand\Wtilde{\stackrel{\sim}{\smash{W}\rule{0pt}{1.1ex}}}
\newcommand{\dstar}{$D^*$-TASEP}
\newcommand{\dt}{\D-TASEP}
\newcommand{\ct}{\chC-TASEP}
\newcommand{\bt}{\hatB-TASEP}
\newcommand{\dmt}{\D-MultiTASEP}
\newcommand{\cmt}{\chC-MultiTASEP}
\newcommand{\bmt}{\hatB-MultiTASEP}
\newcommand{\hatB}{\ensuremath{{B}}}
\newcommand{\chB}{\ensuremath{{\check B}}}
\newcommand{\hatC}{\ensuremath{{C}}}
\newcommand{\chC}{\ensuremath{{\check C}}}
\newcommand{\D}{\ensuremath{{D}}}
\DeclareMathOperator{\Col}{Col}
\DeclareMathOperator{\Row}{Row}
\DeclareMathOperator{\Hu}{UHook}
\DeclareMathOperator{\Hd}{DHook}
\begin{document}

\title[Limiting directions for random walks in Weyl groups]
{Limiting directions for random walks in classical affine Weyl groups}

\author{Arvind Ayyer}
\address{Arvind Ayyer, Department of Mathematics, Indian Institute of Science, Bangalore - 560012, India}
\email{arvind@iisc.ac.in}

\author{Svante Linusson}
\address{Svante Linusson, Department of Mathematics, Royal Institute of Technology - KTH, Stockholm, SE-10044 Sweden.}
\email{linusson@kth.se}

\author{Samu Potka}
\address{Samu Potka, Department of Mathematics, Royal Institute of Technology - KTH, Stockholm, SE-10044 Sweden.}
\email{potka@kth.se}

\subjclass[2010]{60K35, 05A05, 05A19, 82C23, 20F55}

\keywords{random walk, affine Weyl groups, limiting direction, $B_n$, $C_n$, $D_n$, Kac labels, TASEP, multispecies}

\date{\today}

\begin{abstract}
Let $W$ be a finite Weyl group and $\Wtilde$ the corresponding affine Weyl group. A random element of $\Wtilde$ can be obtained as a reduced random walk on the alcoves of $\Wtilde$. By a theorem of Lam (Ann. Prob. 2015), such a walk almost surely approaches one of $|W|$ many directions. We compute these directions when $W$ is $B_n$, $C_n$ and $D_n$ and the random walk is weighted by Kac and dual Kac labels. This settles Lam's questions for types $B$ and $C$ in the affirmative and for type $D$ in the negative. The main tool is a combinatorial two row model for a totally asymmetric simple exclusion process called the \dstar{}, with four parameters. By specializing the parameters in different ways, we obtain TASEPs for each of the Weyl groups mentioned above. Computing certain correlations in these TASEPs gives the desired limiting directions.
\end{abstract}

\maketitle

\section{Introduction}

Let $W$ be a finite Weyl group and $\Wtilde$ the corresponding affine Weyl group. In \cite{lam-2015}, Lam studied large random elements of $\Wtilde$ obtained by multiplication by a randomly chosen simple reflection at each step under the condition that the expression stays reduced. This can also be described as a random walk on the alcoves of the affine Weyl group. In each step, the reduced random walk will cross into a new alcove based on the condition that it must never cross a hyperplane that has already been crossed. See Figure \ref{F:TypeB} for an example of type $B$.
This process is a Markov chain and the walk will after a finite number of steps 
almost surely be confined to one chamber of the underlying finite Weyl group. In that chamber it will by the law of large numbers go in a certain direction. Lam proved \cite{lam-2015} that the probability that such a reduced walk will end up in an chamber corresponding to an element $w\in W$ is given by the stationary distribution $\pi_W$ of a certain finite state Markov chain with rules defined by the algebra of the group $W$. He also proved a formula for the exact direction of the walk in terms of $\pi_W$. 

\begin{figure}
\centering
\includegraphics[width=.5\textwidth]{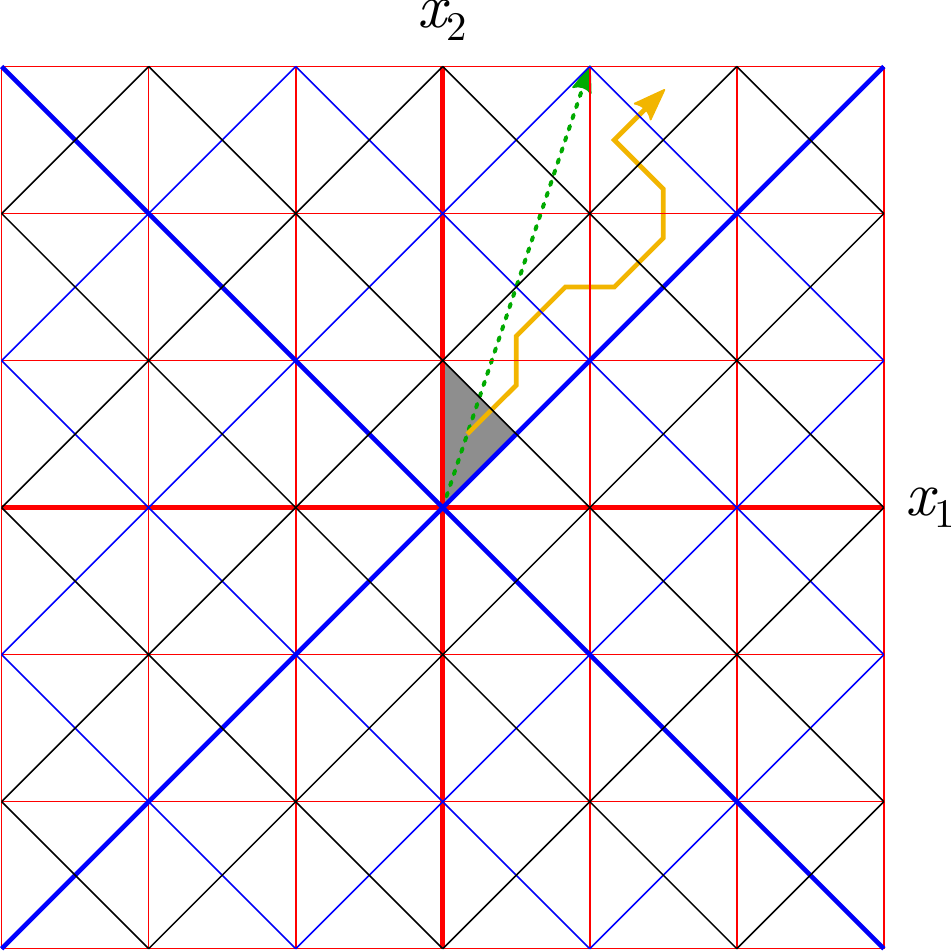}
\caption{Example of a random walk in $\tilde B_2$. The red lines ($x_i=k$) correspond to the root $\alpha_0$, changing the sign of the first element, the blue lines ($x_1\pm x_2=2k$) to $\alpha_1$, swapping the two elements, 
and the black lines ($x_1\pm x_2=2k+1$) to $\theta$, replacing $w_1,w_2$ with $w_4,w_3$ in $\tilde B_2$ which means swapping and changing the signs of the last two elements in the \bmt{}. 
The yellow path corresponds to the word $\dots\tilde s_1\tilde s_2\tilde s_0\tilde s_2\tilde s_0\tilde s_1\tilde s_0\tilde s_2$. After seven steps it is confined to the chamber of the identity of $B_2$. The (green) dotted line is the limiting direction of the reduced random walk.}
\label{F:TypeB}
\end{figure}

For the Weyl group of type $A$, Lam conjectured that this limiting direction is given by the sum of all positive roots.
This was proved to be the case by 
the second and third authors~\cite{ayyer-linusson-2016} by computing correlations in the multispecies TASEP on the ring, a model that had already been considered in the physics literature with completely different motivation~\cite{evans-ferrari-mallick-2009, prolhac-etal-2011}. 
The stationary distribution of the multispecies TASEP had previously been independently computed by Ferrari and Martin \cite{FM07} where they introduced the so-called {\em multiline queues} as a combinatorial tool to understand the stationary distribution. 

In this article, we study the corresponding exclusion processes for the classical affine Weyl groups of type $B$, $C$ and $D$, which will give us limiting directions of the reduced random walks for these groups. Here a natural thing to do is to weight the walks with the so-called {\em Kac labels} or {\em dual Kac labels}~\cite{kac-1990} for each of the types; see Table \ref{T:Kac}. 
For $\tilde A_n$ the limiting direction was proved to be equal to the sum of positive roots of $A_n$ in \cite {ayyer-linusson-2016} as conjectured in \cite{lam-2015}. 
It was also claimed~\cite{lam-2015} that, using the Kac labels as weights, 
a similar statement held true for type $B$ and was close to being true for the other types.
 We prove here that this is true for type $B$ (Section \ref{ss:hatB}), but not true for type $D$ (Section \ref{ss:D}). 
However, the expression looks similar to the sum of positive roots for type $C$ using the dual Kac labels instead; see Remark~\ref{rem:similar}. Strangely, our computations suggest that the limiting directions for types $C$ and $D$ using Kac labels are closely related; see 
Remark~\ref{rem:cequald}. 
%It would be interesting to understand the conceptual reason for this seeming numerical coincidence.

As a combinatorial tool, we use a two-row model for a more general Markov chain called the \dstar{}, studied in \cite{AALP-2019}.
This model has four parameters, and by specializing these parameters we get the TASEPs needed for each of the Weyl groups with Kac and dual Kac labels we consider here. The three TASEPs studied in detail, named the \bmt{}, \cmt{} and \dmt{}, are interesting in their own right. 
In particular, the \bmt{} has interesting properties and certain two-point correlations in that chain have the same curious independence property that was proven in type $A$; see Conjecture \ref{conj:b-corr}.
For the multiTASEPs of types $\hatC$, $\chC$ we can lump to a TASEP that has previously been studied in the literature~\cite{arita-2006,als-2009} and we can use the results therein to find the limiting direction in this case.

The paper is organized as follows. In Section \ref{sec:Background} we give the background and conventions for the Weyl groups and the root systems used. In Section \ref{sec:results} we state our main results and conjectures. In Section~\ref{sec:models} we define the TASEPs used for each Weyl group and in Section~\ref{sec:color}, we describe how we project them to TASEPs with only two species.
In Section \ref{sec:duchi-schaeffer} we give a two row combinatorial interpretation of the \dstar{}, which is a modification 
of the two-row model given by Duchi and Schaeffer~\cite{duchi-schaeffer-2005}. This is used in Section~\ref{sec:part} use to compute the partition functions and two-point correlations needed for proving the limiting direction in each case.

\section{Background}
\label{sec:Background}

Weyl groups are finite reflection groups studied extensively in both Lie theory and combinatorics. A Weyl group $W$ is generated by a number of simple reflections $\{s_0,\dots,s_{n-1}\}$ corresponding to simple roots. In the affine group $\Wtilde$ the corresponding reflections are written as $\tilde s_0,\dots,\tilde s_{n-1}$ and accompanied by one more reflection $\tilde s_n$ in the so-called highest root $\theta$ which together generate the group. Lam~\cite{lam-2015} studied the shape of a random semi-infinite element of $\Wtilde$. His model of randomness is the following: at each step, a word is multiplied on the left by a simple reflection subject to the condition that the word is reduced. This corresponds to a reduced random walk on the alcoves of the hyperplane arrangement corresponding to $\Wtilde$, where reduced means that no hyperplane is crossed twice. For an example of type $B$, see Figure \ref{F:TypeB}. Such a walk will, with probability one, be confined to one of the chambers of $W$ and tend to go in a specific direction, for which Lam gave a formula
 (see Theorem \ref{T:lam}) in terms of the stationary distribution $\pi_W$ of a corresponding TASEP defined on the
 underlying group $W$. A step $\tilde s_i$, $0\le i\le n-1$, in the random walk corresponds to $s_i$ in the TASEP, and $\tilde s_n$ corresponding to the reflection $r_\theta$ in the longest root is also easily interpreted in each case; see below.

In this article, we will determine this direction for the classical root systems $\tilde B_n$, $\tilde C_n$ and $\tilde D_n$. For the group $\tilde A_n$ of permutations this was done in \cite{ayyer-linusson-2016}.
We will use the combinatorial description in terms of signed permutations for both the finite and affine groups as presented in \cite[Chapter 8]{bjorner-brenti-2005}. 
We think of elements of $\Wtilde$ in the window notation: $w = [w_1, \dots, w_n]$ means that $w(i) = w_i$ for $1\leq i \leq n$. For an element $(w_1,\dots,w_n)$ in the finite $W$, we have $-n\le w_i\le n$. 
Taking each coordinate modulo $2n+1$ gives an element in $W$ from one in $\Wtilde$. Recall that $B_n = C_n$ consist of all possible signed permutations, whereas $D_n$ consists of only those signed permutations which have an even number of negative signs. We will use both $-i$ and $\neg i$ to denote a signed
 element. In all the finite groups $s_i$, $1\le i\le n-1$, interchanges $w_i$ and $w_{i+1}$. The reflection $s_0$ changes the sign of $w_1$ in $B_n$ and $C_n$, whereas in $D_n$ it swaps $w_1$ and $w_2$ and changes the sign of both of them. The reflection $r_\theta$ changes the sign of $w_n$ in $C_n$. In $B_n$ and $D_n$, the reflection $r_\theta$ swaps $w_{n-1}$ and $w_n$ and changes the sign of both of them. Note how this follows from the roots listed below.

We will pick explicit roots in $\mathbb{R}^n$, and let $W$ act on $\mathbb{R}^n$ by $w\cdot e_i = e_{w^{-1}(i)}$, where we define $e_{-i} = -e_i$.
Lam's formula is as follows.

\begin{theorem}[{\cite[Theorem 2]{lam-2015}}]
\label{T:lam}
The direction for a reduced random walk in the Weyl group $\Wtilde$, which is confined to the identity chamber, is parallel to 
\[
\psi = \sum_{w:r_\theta w > w} \pi_W(w) \; w^{-1}\cdot \theta. 
\]
\end{theorem}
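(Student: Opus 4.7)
The plan is to decompose each element of $\Wtilde$ as $\tilde w = u\cdot t_\lambda$ with $u\in W$ and $t_\lambda$ a translation in the relevant (co)root lattice, analyze how the finite part and the translation part evolve along the reduced walk, and then invoke the ergodic theorem for an induced Markov chain on $W$ with stationary distribution $\pi_W$.

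First I would write $\tilde w_k = u_k t_{\lambda_k}$ for the walker's element after $k$ steps. Once the walker is confined to the identity chamber, the alcove $\tilde w_k A_0$ lies within bounded distance of $\lambda_k$, so the limiting direction of the walk coincides with $\lim_{k\to\infty}\lambda_k/k$. A direct calculation shows that left multiplication by $\tilde s_i$ for $1\le i\le n-1$ sends $(u,\lambda)\mapsto(s_i u,\lambda)$, leaving $\lambda$ untouched, while $\tilde s_0 = r_\theta t_\theta$ sends $(u,\lambda)\mapsto(r_\theta u,\,\lambda+u^{-1}\theta)$. In particular, $\lambda_k$ changes only on steps where $\tilde s_0$ is chosen, and each such step adds $u_k^{-1}\theta$.

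Next I would verify that, asymptotically, the reduced-walk condition permits $\tilde s_0$ from state $u$ precisely when $r_\theta u>u$ in the Bruhat order on $W$, equivalently when $u^{-1}\theta$ is a positive root. This follows from the affine length formula: for $\lambda$ sufficiently deep in the fundamental chamber, the sign of $\tilde w^{-1}(\alpha_0)$ is determined by $u^{-1}\theta$, so $\tilde s_0\tilde w>\tilde w$ reduces to the finite-group condition $r_\theta u>u$. For the other generators the analogous equivalence $\tilde s_i\tilde w>\tilde w\iff s_i u>u$ already holds exactly for $i\ge 1$, with no dependence on $\lambda$. Consequently, the projected sequence $(u_k)$ is (asymptotically) the finite-state Markov chain on $W$ whose stationary distribution is $\pi_W$. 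Applying the ergodic theorem to the translation increments then yields
\[
\lim_{k\to\infty}\frac{\lambda_k}{k}\;=\;c\sum_{u\in W}\pi_W(u)\,\mathbf 1\{r_\theta u>u\}\,u^{-1}\theta
\]
for some positive constant $c$ (the asymptotic rate at which $\tilde s_0$ is selected), which is a positive scalar multiple of $\psi$.

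The hard part will be making the ``sufficiently deep in the chamber'' reduction rigorous. The introduction already states that the walker is almost surely eventually trapped in a single chamber of $W$, but one further needs that, after an almost surely finite number of additional steps, $\lambda_k$ becomes deep enough for the finite-group criterion for $\tilde s_0$-ascents to agree with the true affine one. I would handle this with a hitting-time estimate combined with a Borel--Cantelli argument, ensuring that only finitely many steps violate the asymptotic criterion and therefore do not affect the Ces\`aro average that determines the direction.
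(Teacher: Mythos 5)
Theorem~\ref{T:lam} is not proved in this paper at all: it is quoted from Lam \cite{lam-2015} (his Theorem~2) and used as a black box, so there is no internal proof to compare you against. Measured against the known argument (Lam's), your outline is the right one in spirit: decompose $\tilde w=u\,t_\lambda$, note that only the affine generator changes the translation part and does so by a vector parallel to $u^{-1}\theta$, identify the asymptotic law of the finite part with the finite-state chain whose stationary distribution is $\pi_W$, and finish with the ergodic theorem for additive functionals of that chain.

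Three points in your sketch need more care than you give them. First, the claim that for the finite generators the ascent condition $\tilde s_i\tilde w>\tilde w$ is \emph{exactly} equivalent to $s_iu>u$, independently of $\lambda$, is false as stated: with $\tilde w=ut_\lambda$ and left multiplication, the length criterion says $\ell(s_i\tilde w)>\ell(\tilde w)$ iff $\tilde w^{-1}\alpha_i$ is a positive affine root, and its level is $\pm\langle\lambda,(u^{-1}\alpha_i)^\vee\rangle$; so the reduced-walk constraint for \emph{every} generator depends on $\lambda$ and only stabilizes to a condition on $u$ alone once the walk is confined and deep in a chamber. Your ``deep enough'' reduction is therefore needed uniformly, not just for the affine node, and depending on which chamber and whether one tracks $\tilde w A_0$ or $\tilde w^{-1}A_0$ the stabilized condition may come out as $s_iu<u$ rather than $s_iu>u$, i.e.\ the projected chain can be the reversed one if conventions are not pinned down. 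Second, conventions generally: in this paper the affine reflection is $\tilde s_n$ (the finite group already uses $s_0,\dots,s_{n-1}$), the standard affine reflection is $s_{\theta,1}=t_\theta r_\theta$ rather than your $r_\theta t_\theta=t_{-\theta}r_\theta$, and $\theta$ versus $\theta^\vee$ must be tracked (harmless for ``parallel to,'' since they are proportional, but the overall sign is not harmless and must match the chamber in which the walk is trapped). Third, your constant $c$ is a constant only if the generator proposed at each step is chosen with state-independent probabilities (the lazy, TASEP-style formulation used by Lam and by this paper); if one instead samples among the currently allowed generators, the per-step probability of the affine move depends on $u$, and you need the extra remark that the limiting \emph{direction} is unchanged by this time change. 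None of these is a structural obstruction, and the hard analytic step you flag (almost-sure eventual confinement deep in a chamber, plus discarding finitely many exceptional steps from the Ces\`aro average) is indeed where the real work lies.
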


To determine if $r_\theta w>w$ we use the combinatorial formulas for inversions; see \cite[Chapter 8]{bjorner-brenti-2005}. 
The explicit formula for $\psi$ is given for $C_n$ in \eqref{eq:limC} and for $B_n$ and $D_n$ in \eqref{eq:limB}.

We next list the Coxeter group conventions we employ in this article for the classical types. We note that these conventions are slightly different from those used by Bj\"orner and Brenti~\cite{bjorner-brenti-2005}, but are used in \cite[Chapter 17]{boroviks-2010}, for example.
Lam's proofs are uniform for all types and therefore, do not need to assume any such choices.
For each type ($C$, $B$ and $D$), we list the choice of roots, simple roots and highest root in Table~\ref{tab:root-data}.

\begin{table}[htbp!]
\centering
\begin{tabular}{|p{0.04 \textwidth} | p{0.9 \textwidth}|}
\hline
{$C_n$} & 
{\bf Roots}: $\pm e_i \pm e_j$ for $i < j$, together with $\pm 2e_i$. \\
& {\bf Simple roots}: $(\alpha_0, \dots, \alpha_{n-1}) = (2e_1$, $e_2 - e_1$, \dots, $e_n - e_{n-1})$. \\
& {\bf Highest root}: $\theta = 2e_n$. \\
\hline
{$B_n$} & 
{\bf Roots}: $\pm e_i \pm e_j$ for $i < j$, together with $\pm e_i$. \\
& {\bf Simple roots}: $(\alpha_0, \dots, \alpha_{n-1}) = (e_1$, $e_2 - e_1$, \dots, $e_n - e_{n-1})$. \\
& {\bf Highest root}: $\theta = e_{n-1} + e_n$.\\
\hline
{$D_n$} & 
{\bf Roots}: $\pm e_i \pm e_j$ for $i < j$. \\
& {\bf Simple roots}: $(\alpha_0, \dots, \alpha_{n-1}) = (e_1 + e_2, e_2 - e_1, e_3-e_2, $\\
& $\dots, \allowbreak e_n - e_{n-1})$. \\
& {\bf Highest root}: $\theta = e_{n-1} + e_n$. \\
\hline
\end{tabular}
\caption{Root data for types $C$, $B$ and $D.$}
\label{tab:root-data}
\end{table}

\subsection{Kac and dual Kac labels as weights}

\begin{table}[htbp!]
\centering
\begin{tabular}{| l | c | c | c | c| c | }
\hline
Type & $a_0$ &$a_1$ &$\dots a_i\dots $ &$a_{n-1}$ &$a_{n}$  \\ 
\hline &&&&&\\[-0.3cm]
$A = \check{A}$ & 1 & 1 & 1 & 1 & 1 \\
\hline &&&&&\\[-0.3cm]
\hatB & 2 & 2 & 2 & 1 & 1 \\
\hline &&&&&\\[-0.3cm]
\hatC & 1 & 2 & 2 & 2 & 1 \\
\hline &&&&&\\[-0.3cm]
$D = \check{D}$ & 1 & 1 & 2 & 1 & 1 \\
\hline
 \end{tabular}
 \medskip

 \begin{tabular}{| l | c | c | c | c| c | }
\hline
Type & $\check a_0$ &$\check a_1$ &$\dots \check a_i\dots $ &$\check a_{n-1}$ &$\check a_{n}$  \\ 
\hline &&&&&\\[-0.3cm]
\chB & 1 & 2 & 2 & 1 & 1 \\
\hline &&&&&\\[-0.3cm]
\chC & 1 & 1 & 1 & 1 & 1 \\
\hline
 \end{tabular}
 \medskip
 
\caption{Kac labels $a_i$ and dual Kac labels $\check{a}_i$ for the different classical infinite families of Weyl groups.}
\label{T:Kac}
\end{table}

For the symmetric group $A_n$ the random walk that assigned equal probability to each possible step in each situation was the natural choice and corresponds to the multispecies TASEP on a ring, which was used to 
compute the limiting direction in \cite{ayyer-linusson-2016}. As Lam noted~\cite[Remark 5]{lam-2015}, it makes sense to adjust the weights according to the Kac labels for the other classical groups. They are defined as follows.
For positive integers $a_i$, $0\le i\le n-1$, write $\theta=\sum_{i=0}^{n-1} a_i\alpha_{i}$. See Table \ref{T:Kac} for the values of $a_i's$ for each type, where $a_{n}=1$ for the highest root. We will then let $a_i$ be the 
rate with which the reduced walk takes a step corresponding to $\tilde s_{i}$.

As Lam also pointed out, another natural choice of weights for the reduced walk is the dual Kac labels $\check a_i$, for reasons related to the topology of the affine Grassmanian \cite[Section 5.5]{lam-2015}. They are 
also presented in Table \ref{T:Kac}. For these labels, see \cite[Tables Aff 1 and Aff2]{kac-1990}. The duality of the Dynkin diagrams amounts to reversing the arrows and since type $A$ and $D$ are simply laced, they are self-dual. We will write the random walks with these weights as \hatB, \chB, \hatC, \chC \ and  \D\  respectively. For each of these cases one can define the corresponding TASEP. In Section \ref{sec:models} we will define 
in detail the TASEPs corresponding to the three cases $\chC$, $\hatB$, and $\D$, and omit the details for $\hatC$ and $\chB$. They can be defined using the same methods.

\section{Main Results}\label{sec:results}

The overall strategy of proof will be similar to the proof of Lam's conjecture for type $A$~\cite{ayyer-linusson-2016}. We will recast the finite version of Lam's chain on the Weyl chambers in terms of an exclusion process. 

\subsection{Type \chC}
\label{ss:chC}

For the calculations of the limiting directions in type \chC, we will appeal to the exclusion process known as the \cmt{} (see Section~\ref{sec:cmodel}). We will fix $n$ and let $\pi_\chC$ denote the stationary distribution of the \cmt{} on $n$ sites.
In this section, we denote by ${\aver i}$ the $\pi_\chC$-probability that the last site in the \cmt{} on $n$ sites is occupied by a particle of species $i$ for $1 \leq i \leq n$. We will appeal to the formula for $\aver i$ given in Theorem~\ref{thm:typec-dens}.

\begin{theorem}
\label{thm:lim-dir-c}
The limiting direction of Lam's random walk on the alcoves of the affine Weyl group $\tilde C_n$ with probabilities weighted by dual Kac labels is given by
\[
\sum_{i=1}^n (2i+1) e_i.
\]
\end{theorem}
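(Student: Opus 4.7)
The plan is to apply Lam's formula (Theorem \ref{T:lam}) with the type $C_n$ root data from Table \ref{tab:root-data} and then convert the resulting sum into a one-point marginal of the $\check{C}$-multiTASEP. For $C_n$ we have $\theta = 2e_n$, and under the convention $w\cdot e_i = e_{w^{-1}(i)}$ with $e_{-i} = -e_i$ from Section \ref{sec:Background} I immediately get
\begin{equation*}
w^{-1}\cdot\theta \;=\; 2\,e_{w_n},
\end{equation*}
a vector whose sign records the sign of the last window entry.

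My first task is to pin down the indexing set of Lam's sum. For an arbitrary reflection $s_\alpha$ one has $s_\alpha w > w$ in Bruhat order exactly when $w^{-1}\alpha$ is a positive root, which I would justify via the inversion-counting formulas of \cite[Chapter 8]{bjorner-brenti-2005}. Since the positive roots of $C_n$ of the form $2 e_k$ are precisely those with $k>0$, the condition $r_\theta w > w$ becomes $w_n > 0$. Regrouping the sum by the value $i := w_n \in \{1,\dots,n\}$, and writing $\aver{i}$ for the $\pi_{\check C}$-probability that the rightmost site in the $\check{C}$-multiTASEP carries species $i$, I obtain
\begin{equation*}
\psi \;=\; 2 \sum_{i=1}^{n} \Biggl(\sum_{\substack{w\in C_n\\ w_n=i}} \pi_{\check C}(w)\Biggr) e_i \;=\; 2\sum_{i=1}^{n}\aver{i}\, e_i.
\end{equation*}

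The final step is to substitute the closed form of $\aver{i}$ from Theorem \ref{thm:typec-dens}, which asserts that $\aver{i}$ is proportional to $2i+1$. Absorbing the positive proportionality constant and the factor $2$ into the overall scale (which affects only the magnitude, not the direction), I conclude that $\psi$ is parallel to $\sum_{i=1}^{n}(2i+1)\,e_i$, as claimed.

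The main obstacle is not the reduction above, which is essentially bookkeeping once Lam's formula is in hand, but the proof of Theorem \ref{thm:typec-dens}. That formula will come from the two-row combinatorial description of the $D^*$-TASEP in Section \ref{sec:duchi-schaeffer} (a modification of the Duchi--Schaeffer model), with parameters specialised to the $\check{C}$-multiTASEP following Section \ref{sec:models}, together with the partition-function and correlation computations of Section \ref{sec:part}. The key simplification is the lumping to the two-species TASEP already analysed in \cite{arita-2006, als-2009}, which is what makes the last-site marginal extractable in closed form and ultimately forces the clean numerator $2i+1$.
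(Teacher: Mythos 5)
Your proposal is correct and follows essentially the same route as the paper: apply Lam's formula (Theorem \ref{T:lam}) with $\theta = 2e_n$, identify the condition $r_\theta w > w$ with $w_n > 0$ via the inversion/length formulas for $C_n$, rewrite the sum as the last-site marginals $\aver{i}$ of the \cmt{}, and invoke Theorem \ref{thm:typec-dens} for $\aver{i} = \tfrac{2i+1}{2n(2n+1)}$. The only cosmetic difference is that you carry the factor $2$ from $w^{-1}\cdot\theta = 2e_{w_n}$ explicitly and absorb it into the overall scale, which the paper drops immediately since only the direction matters.
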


\begin{proof} 
By Theorem \ref{T:lam}, the limiting direction is 
\begin{align}\label{eq:limC}
\sum_{\substack{w \\ r_\theta w  > w}} \pi_\chC (w) \; w^{-1} \cdot \theta 
= \sum_{\substack{w \\ w_n > 0}} \pi_\chC(w) \; e_{w_n}
=\sum_{i=1}^n {\aver{i}} \; e_i.
\end{align}
The first equality follows from the fact that the reflection $r_\theta$ corresponds to changing the sign of the last 
position in $w$ and the formula for the length in $C_n$ (which is the same as $B_n$)
(see \cite[Proposition 8.1.1]{bjorner-brenti-2005}) in terms of the number of inversions gives that the length will increase if the last position becomes negative. 
The second equality follows from the definition of $\aver i$.
Theorem~\ref{thm:typec-dens} now gives 
\[
{\aver i}=\frac{2i+1}{2n(2n+1)},
\]
determining the limiting direction up to a constant.
\end{proof}

\begin{remark}
\label{rem:similar}
Note that the sum of positive roots for $C_n$ (from Table~\ref{tab:root-data}) is
\[
\sum_{1 \leq i < j \leq n} (e_j - e_i) + 
\sum_{1 \leq i < j \leq n} (e_j + e_i) + 2 \sum_{i=1}^n e_i
= \sum_{i=1}^n 2i \; e_i.
\]
This is very similar to, but different from, the result of Theorem~\ref{thm:lim-dir-c}.
\end{remark}

\subsection{Type \hatB} 
\label{ss:hatB}

For the calculations of the limiting directions in type \hatB, we will appeal to the exclusion process known as the \bmt{} (see Section~\ref{sec:bmodel}). We will fix $n$ and let $\pi_B$ denote the stationary distribution of the \bmt{} on $n$ sites.
In this section, we denote by $\aver {i,j}$ the $\pi_B$-probability that the last two sites in the \bmt{} on $n$ sites are occupied by particles of species $i$ and $j$ respectively. Here, we take $-n \leq i,j \leq n$ and denote $-k$ by $\neg{k}$. We will appeal to the formulas for certain sums of $\aver {i,j}$ given in Lemma \ref{L:Corr-sums-B}.

\begin{theorem}
\label{thm:lim-dir-b}
The limiting direction of Lam's random walk on the alcoves of the affine Weyl group $\tilde B_n$ with probabilities weighted by Kac labels is given by
\[
\sum_{i=1}^n (2i-1) e_i.
\]
\end{theorem}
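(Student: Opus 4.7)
The plan is to follow the template of the proof of Theorem~\ref{thm:lim-dir-c}, with the added complication that the highest root of $B_n$ is $\theta = e_{n-1} + e_n$, so the reflection $r_\theta$ acts on the last \emph{two} window entries. Accordingly, the limiting direction will be expressed in terms of two-point correlations at sites $n-1$ and $n$ in the \bmt, rather than one-point densities at site $n$.

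First I would apply Lam's formula (Theorem~\ref{T:lam}) and translate the condition $r_\theta w > w$ into a condition on $(w_{n-1}, w_n)$. Using the Coxeter-theoretic criterion that $\ell(r_\alpha w) > \ell(w)$ iff $w^{-1}\alpha$ is a positive root, together with $w^{-1}\theta = e_{w_{n-1}} + e_{w_n}$ and the convention $e_{-k}=-e_k$, a short case analysis on the signs of $w_{n-1}$ and $w_n$ collapses into the clean condition
\[
r_\theta w > w \iff w_{n-1} + w_n > 0.
\]
The three nontrivial subcases (both entries positive; opposite signs with the positive entry larger in absolute value; both entries negative, ruled out) all reduce to this one inequality because in each case $e_a + e_b$ is a positive root of $B_n$ exactly when $a+b>0$.

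Next I would expand Lam's sum in terms of the two-point correlations $\aver{i,j}$ at sites $n-1$ and $n$. Starting from
\[
\psi = \sum_{\substack{w \\ w_{n-1}+w_n>0}} \pi_B(w)\bigl(e_{w_{n-1}} + e_{w_n}\bigr)
\]
and grouping the contribution to each $e_k$ for $k=1,\ldots,n$, the relation $e_{-k}=-e_k$ produces $\psi = \sum_{k=1}^n c_k\, e_k$, where $c_k$ is a signed combination of partial row- and column-sums of the two-point correlation matrix along the slices with first or second coordinate equal to $\pm k$, restricted to pairs satisfying $i+j>0$. The proof then reduces to an arithmetic identity showing $c_k \propto 2k-1$.

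At this point I would invoke Lemma~\ref{L:Corr-sums-B}, which is designed to evaluate precisely these aggregated two-point correlation sums for the \bmt. Substituting its closed-form expressions for the four constituent sums in $c_k$ and simplifying should yield $c_k$ proportional to $2k-1$, giving the claimed limiting direction (which is only defined up to a positive scalar). The main obstacle is the careful sign and index bookkeeping when splitting $\{(i,j) : i+j>0,\ i \neq \pm j\}$ into the four pieces according to which of $\pm k$ sits in which coordinate; the probabilistic content is offloaded to Lemma~\ref{L:Corr-sums-B}, and the theorem above amounts to checking that these four pieces collapse into a single arithmetic progression in $k$.
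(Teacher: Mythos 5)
Your proposal is correct and follows essentially the same route as the paper: apply Lam's formula, reduce $r_\theta w>w$ to the condition that the larger-in-absolute-value of $w_{n-1},w_n$ is positive (the paper does this via the $B_n$ inversion formula rather than the root-positivity criterion, but the case analysis is identical), expand in the two-point correlations $\aver{i,j}$ at the last two sites, and evaluate the resulting row/column/hook sums via Lemma~\ref{L:Corr-sums-B} to get the coefficient of $e_k$ proportional to $2k-1$. The only part left implicit, the sign and index bookkeeping identifying your signed slice sums with $\Row_k(n)-\Hd_k(n)+\Col_k(n)-\Hu_k(n)$, is exactly the computation carried out in the paper.
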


\begin{proof} By Theorem \ref{T:lam}, the limiting direction is 
\begin{align*}
 \sum_{\substack{w \\ r_\theta w > w}} \pi_\hatB(w) \; w^{-1}\cdot (e_{n-1} + e_n) 
= \sum_{\substack{w \\ r_\theta w > w}} \pi_\hatB(w) (e_{w_{n-1}} + e_{w_n}).
\end{align*}
Just as in type \chC{},  we first use the interpretation of $r_\theta$ as simultaneously switching position and sign for $w_{n-1}, w_n$.
Then we use the length of $w$ in terms of its inversions in $B_n$~\cite[Proposition 8.1.1]{bjorner-brenti-2005} and perform a case analysis to determine how the $r_\theta$ move affects the length of $w$ to obtain
\begin{align*}
& \sum_{\substack{w \\ r_\theta w > w}} \pi_\hatB(w) (e_{w_{n-1}} + e_{w_n}) \\
=& \sum_{1 \leq i < j \leq n} \bigg(
\sum_{\substack{w \\ (w_{n-1},w_n) = (j,i)}} \pi_\hatB(w)(e_i+e_j) 
+ \!\sum_{\substack{w \\ (w_{n-1},w_n) = (j,\neg{i})}} \pi_\hatB(w)(-e_i + e_j) \\
&+\sum_{\substack{w \\ (w_{n-1},w_n) = (i,j)}} \pi_\hatB(w)(e_i + e_j) +\sum_{\substack{w \\ (w_{n-1},w_n) = (\neg{i},j)}} \pi_\hatB(w)(-e_i + e_j) \bigg) \\
=&\sum_{1\leq i < j \leq n} \big( {\aver{j,i}} (e_i+e_j) + {\aver{j,\neg{i}}} (-e_i+e_j) \\
&+{\aver{i,j}} (e_i+e_j) + {\aver{\neg{i},j}} (-e_i+e_j) \big).
\end{align*}
This can be further simplified to
\begin{multline}
\label{eq:limB}
\sum_{i=1}^n e_i \left(\sum_{j=i+1}^n {\aver{j,i}} - {\aver{j,\neg{i}}} + {\aver{i,j}} - {\aver{\neg{i}, j}} \right) \\
+ \sum_{j=1}^n e_j \left(\sum_{i=1}^{j-1} {\aver{j,i}} + {\aver{j,\neg{i}}} + {\aver{i,j}} + {\aver{\neg{i}, j}} \right).
\end{multline}
With notation from Section \ref{sec:limB} we can rewrite the coefficient for $e_k$ as 
\begin{multline}
\sum_{j=-k+1}^n \big( {\aver {j,k}} + {\aver {k,j}} \big) - \sum_{j=k+1}^n \big( {\aver {j,-k}} + {\aver {-k,j}} \big) \\
 = \Row_k(n)-\Hd_k(n)+\Col_k(n)-\Hu_k(n).
\end{multline}
Plugging in the formulas from Lemma \ref{L:Corr-sums-B} for $1\le k\le n$, this simplfies to $\frac{2k-1}{n(2n-1)}$, which proves the result.
\end{proof}

\begin{remark}
Note that the sum of positive roots for $B_n$ (from Table~\ref{tab:root-data}) is
\[
\sum_{1 \leq i < j \leq n} (e_j - e_i) + 
\sum_{1 \leq i < j \leq n} (e_j + e_i) + \sum_{i=1}^n e_i
= \sum_{i=1}^n (2i -1) \; e_i.
\]
This is exactly the same as the result of Theorem~\ref{thm:lim-dir-b} up to an overall scaling. This proves Lam's claim~\cite[Remark 5]{lam-2015}.
\end{remark}

We note that the proof of Theorem~\ref{thm:lim-dir-b} uses certain sums of $\aver{i,j}$'s,
but our techniques do not allow us to determine all $\aver{i,j}$'s.
However, we are able to give conjectures for all two-point correlations. 
Since we have the identity $\aver{i,\neg j} = \aver{i,j}$ from Theorem~\ref{T:Bprob},
it will suffice to conjecture $\aver{i,\neg j}$ and $\aver{\neg i,\neg j}$.
From numerical data for small values of $n$, we arrive at the the following.

\begin{conjecture} 
We conjecture the following two-point correlations for the last two positions in the \bmt{}.

\label{conj:b-corr}
\begin{enumerate}

\item For $3 \leq i \leq n, 1 \leq j \leq i-2$, 
\[
\aver{ \neg{i}, \neg{j} } = \frac 1{(2n)^2}.
\]

\item For $1\le j\le n-1$, 
\[
{\aver{\overline{j+1},\neg j}} =\frac{1}{(2n)^2}+\frac{n^2-j^2}{4n^2(2n-1)}.
\]

\item For $1 \leq i \leq n-1, i+1 \leq j \leq n$,  
\[
\aver{ \neg{i}, \neg{j} } = \frac{j-i}{2n^2 (2n-1)},
\] 
and for $1 \leq i \leq n-2, i+2 \leq j \leq n$, 
\[
\aver{ i, \neg{j} } = \frac{i+j-1}{2n^2 (2n-1)}.
\] 

\item For $1\le j\le n-1$, 
\[
{\aver{j,\overline {j+1}}}=\frac{j(n^2-j^2+2n-2)}{2n^2(2n-1)(n-1)}.
\]

\item For $2 \leq i \leq n, 1 \leq j \leq i-1$,  
\[
\aver{ i, \neg{j} } = \frac{3(i-j)(i+j-1)}{4n^2 (2n-1)(n-1)}.
\]
\end{enumerate}
\end{conjecture}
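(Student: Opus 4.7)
The plan is to refine the two-row combinatorial model for the $\hatB$-MultiTASEP developed in Sections \ref{sec:duchi-schaeffer}--\ref{sec:part}, going from the lumped quantities $\Row_k(n),\Col_k(n),\Hu_k(n),\Hd_k(n)$ used to prove Theorem \ref{thm:lim-dir-b} to the fully refined two-point correlations at the last two sites. For the limiting direction only the combinations $\Row_k+\Col_k-\Hu_k-\Hd_k$ enter, so the methods of the paper have a degree of freedom that the conjecture pins down; the aim is to constrain this freedom enough to isolate each $\aver{a,b}$.

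First I would set up the case split built into the conjecture. The five regimes correspond to distinct sign-and-order patterns for $(a,b)$ at the last two positions, and each pattern translates into a specific local configuration of the last two columns of a two-row tableau representing a stationary state. I would fix these two columns accordingly and enumerate completions of the first $n-2$ columns using the same weight-preserving operations as in Section \ref{sec:part}. The identity $\aver{i,\neg{j}}=\aver{i,j}$ coming from Theorem \ref{T:Bprob}, together with the sum identities of Lemma \ref{L:Corr-sums-B}, gives linear constraints against which intermediate formulas can be checked and which reduce the number of independent quantities to be computed.

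To trim the bookkeeping I would also exploit the colour-projection framework of Section \ref{sec:color}: project to a few-species TASEP where the stationary distribution is accessible in closed form, and then recover finer correlations by inverting the projection via M\"obius inclusion--exclusion over the refinement lattice on signed species. In particular, the conjecturally constant value $\aver{\neg{i},\neg{j}}=\tfrac{1}{(2n)^2}$ for $j\le i-2$ is a hallmark of asymptotic independence between non-adjacent negative species, analogous to the type-$A$ phenomenon in \cite{ayyer-linusson-2016}, and this suggests that the central lemma to isolate is a factorization identity for the corresponding two-row enumeration.

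The main obstacle will be precisely this constancy in case (1). The flat value points to a bijection between two-row configurations contributing to $\aver{\neg{i},\neg{j}}$ and those contributing to $\aver{\neg{i'},\neg{j'}}$ within the same regime, in the spirit of the independence results proved for type $A$. Constructing such a bijection explicitly---or, alternatively, a generating-function identity that forces the $(i,j)$-independence---is where I expect the real difficulty to lie. Once case (1) is established, the remaining cases (2)--(5) should follow by combining it with the lumped sums of Lemma \ref{L:Corr-sums-B} and controlled local refinements of the two-row enumeration for adjacent positions, using the symmetry $\aver{i,\neg{j}}=\aver{i,j}$ to halve the remaining work.
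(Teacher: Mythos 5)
The statement you are addressing is a \emph{conjecture} in the paper: the authors state explicitly that the proof of Theorem~\ref{thm:lim-dir-b} only uses certain sums of the $\aver{i,j}$'s and that ``our techniques do not allow us to determine all $\aver{i,j}$'s.'' So there is no proof in the paper to match, and your proposal, as written, does not close that gap either --- it is a plan whose two load-bearing steps are missing. First, the inversion step cannot work as described. The only projections available in the paper are the $k$-colorings of Proposition~\ref{prop:kcoloring}, which are threshold maps; applying them (even with different thresholds at the two sites, as in the proof of Lemma~\ref{L:Corr-sums-B}) only gives access to quantities of the form $\aver{1,\1}_{n,n_0}$, $\aver{\1,1}_{n,n_0}$, etc., i.e.\ precisely the row, column and hook sums $\Row_k$, $\Col_k$, $\Hu_k$, $\Hd_k$. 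There are $O(n)$ such linear functionals against $O(n^2)$ unknowns $\aver{i,j}$, so no M\"obius or inclusion--exclusion argument over this family can isolate individual correlations; this counting obstruction is exactly the reason the authors state the result only as a conjecture.

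Second, your proposed refinement of the two-row model does not exist in the paper's framework: the two-row $D^*$ process and the Duchi--Schaeffer-type tableaux encode only the two-species ($1$, $\1$, $0$, $*$) chain, so ``fixing the last two columns'' of a two-row configuration records only the coarse classes of the last two sites, not the joint event that they carry species $\neg i$ and $\neg j$ of the \bmt{}. A genuinely multispecies combinatorial description of $\pi_B$ (an analogue of the Ferrari--Martin multiline queues used in type $A$) would be needed, and constructing one --- or the bijection/factorization you invoke to explain the flat value $\aver{\neg i,\neg j}=1/(2n)^2$ in case (1) --- is the actual open problem. You correctly identify that this is where the difficulty lies, but identifying it is not the same as resolving it, so the proposal should be regarded as a research outline rather than a proof.
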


In particular, cases (1) and (3) of Conjecture~\ref{conj:b-corr} resemble the results for type $A$ in \cite[Theorem 4.2]{ayyer-linusson-2016}. Notice that the correlations in case (1) behave as if the particles were independent.
We illustrate these with data for $n=4$ in Table~\ref{tab:bvalues-n=4}. These cases are marked in boldface.

\begin{table}[htbp!]
\begin{equation*}
\begin{array}{c|cccc}
i \backslash j & \neg{4} & \neg{3} & \neg{2} & \neg{1} \\[0.1cm]
\hline
&&&& \\[-0.3cm]
\neg{4} & 0 & \frac{1}{32} & \mathbf{\frac{1}{64}} & \mathbf{\frac{1}{64}} \\[0.1cm]
\neg{3} & \mathbf{\frac{1}{224}} & 0 & \frac{19}{448} & \mathbf{\frac{1}{64}} \\[0.1cm]
\neg{2} & \mathbf{\frac{2}{224}} & \mathbf{\frac{1}{224}} & 0 & \frac{11}{224} \\[0.1cm]
\neg{1} & \mathbf{\frac{3}{224}} & \mathbf{\frac{2}{224}} & \mathbf{\frac{1}{224}} & 0 \\[0.1cm]
1 & \mathbf{\frac{4}{224}} & \mathbf{\frac{3}{224}} & \frac{1}{32} & 0 \\[0.1cm]
2 & \mathbf{\frac{5}{224}} & \frac{3}{56} & 0 & \frac{1}{224} \\[0.1cm]
3 & \frac{13}{224} & 0 & \frac{1}{112} & \frac{3}{224} \\[0.1cm]
4 & 0 & \frac{3}{224} & \frac{5}{224}& \frac{3}{112}
\end{array}
\end{equation*}
\caption{The values of $\aver{ i, \neg j }_{\hatB}$ for $1 \leq j \leq 4$ in the \bmt{} with $n=4$. 
Since $\aver{i,\neg j}_{\hatB}=\aver{i,j}_{\hatB}$, we do not show the remaining columns.
The boldfaced areas correspond to cases (1) and (3) in Conjecture~\ref{conj:b-corr}.}
\label{tab:bvalues-n=4}
\end{table}

\subsection{Type \D}
\label{ss:D}

For the calculations of the limiting directions in type \D{}, we will appeal to the exclusion process known as the \dmt{} (see Section~\ref{sec:dmodel}). We will fix $n$ and let $\pi_D$ denote the stationary distribution of the \dmt{} on $n$ sites.
In this section, we denote by $\aver {i,j}$ the $\pi_D$-probability that the last two sites in the \dmt{} on $n$ sites are occupied by particles of species $i$ and $j$ respectively. Here, we take $-n \leq i,j \leq n$ and denote $-k$ by $\neg{k}$.

\begin{theorem}
\label{thm:lim-dir-d}
The limiting direction of Lam's random walk on the alcoves of the affine Weyl group $\tilde D_n$ with probabilities weighted by Kac labels is given by
\[
\sum_{i=1}^n c_i e_i,
\]
where $c_1 = 0$ if $n > 2$, and 
\[
c_i = \frac{\frac{i-1}{n+i-2}\binom{2n-3}{n-i}}{Z_{n,i}^{D}} - \frac{\frac{i-2}{n+i-3}\binom{2n-3}{n-i+1}}{Z_{n,i-1}^{D}}
\] 
for $2 \leq i \leq n$, where $Z_{n,i}^{D} = \sum_{j=0}^{n-i}\binom{2j}{j}\binom{2n-2j-2}{n-j-i}$. 
For $i = n$, $c_i$ simplifies to $\frac{1}{n}$.
\end{theorem}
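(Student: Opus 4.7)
The proof will follow the same template as the type $B$ argument (Theorem~\ref{thm:lim-dir-b}). By Theorem~\ref{T:lam} and the fact that the highest root of $D_n$ is $\theta = e_{n-1}+e_n$, the limiting direction is
\[
\psi^D \;=\; \sum_{w:\, r_\theta w > w} \pi_D(w)\,\bigl(e_{w_{n-1}} + e_{w_n}\bigr).
\]
Since $r_\theta$ sends $(w_{n-1},w_n) = (a,b)$ to $(-b,-a)$, the summand depends on $w$ only through its last two values. My first step is to identify, via the inversion formula for the length function on $D_n$ (see \cite[Chapter 8]{bjorner-brenti-2005}), which pairs $(a,b)$ satisfy $r_\theta w > w$; this reduces to a finite case analysis over the signs of $a,b$ and the relative order of $|a|,|b|$, analogous to the one appearing in the proof of Theorem~\ref{thm:lim-dir-b} but using the $D_n$ length (which omits the $\mathrm{neg}$ contribution present in $B_n$). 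Summing $\pi_D(w)$ over all $w$ with a prescribed last pair $(a,b)$ produces the two-point correlation $\aver{a,b}_D$, so $\psi^D$ becomes an explicit $\pm 1$ linear combination of such correlations.

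The second step is to extract the coefficient of $e_k$ for $1 \le k \le n$. After grouping together the terms sharing a common $e_k$, the coefficient can be written as a signed partial sum of $\aver{\pm k,\cdot}_D$ and $\aver{\cdot,\pm k}_D$ over the remaining position, in the spirit of the expression $\Row_k(n) - \Hd_k(n) + \Col_k(n) - \Hu_k(n)$ used in type $B$.

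The heart of the argument is then to evaluate these partial correlation sums for the \dmt{}. For this I will invoke the two-row combinatorial model for the \dstar{} developed in Section~\ref{sec:duchi-schaeffer}, specialized to the type~$D$ parameters, together with the partition-function computations of Section~\ref{sec:part}. The appearance of the central binomials $\binom{2j}{j}$ inside $Z_{n,i}^D = \sum_{j=0}^{n-i}\binom{2j}{j}\binom{2n-2j-2}{n-j-i}$ strongly suggests decomposing two-row configurations according to the position where a distinguished species first appears in one of the rows and reducing the rest to a ballot-type count. The expected output is a closed-form expression for each partial correlation sum as a ratio with numerator proportional to $\tfrac{i-1}{n+i-2}\binom{2n-3}{n-i}$ and denominator $Z_{n,i}^D$; telescoping consecutive $i$ and $i-1$ contributions then yields the claimed difference formula for $c_i$.

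The main obstacle will be precisely this correlation computation. In contrast to types $C$ and $B$, where the relevant sums collapsed to clean single-term ratios proportional to $2i\pm 1$, the type~$D$ answer is genuinely a difference of two quantities indexed by $i$ and $i-1$, reflecting the weaker internal symmetry of the \dmt{} under sign reversal at the last site (only an even number of signs are allowed in $D_n$). This asymmetry is ultimately the reason that, unlike in the type $B$ and $C$ cases, the limiting direction is not a scalar multiple of the sum of positive roots; see Section~\ref{ss:D}. Once the general formula is in place, the boundary cases follow by direct substitution: for $i = n$ the second term vanishes since $\binom{2n-3}{0} = 1$ and $Z_{n,n}^D = 1$, yielding $c_n = 1/n$; and for $i = 1$ (when $n>2$) both differences in the telescoping cancel, giving $c_1 = 0$.
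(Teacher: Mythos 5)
Your route is exactly the paper's: Lam's formula together with the $D_n$ inversion/length formula reduces the direction to the same expression \eqref{eq:limB} as in type $\hatB$; the coefficient of $e_k$ becomes $\Row_k(n)-\Hd_k(n)+\Col_k(n)-\Hu_k(n)$ with correlations now taken in the \dmt{}; and these sums are to be evaluated through the $k$-coloring lumping to the \dt{}, the lumping of the \dt{} to the \dstar{} with $\alpha=\alpha_*=\beta=\beta_*=1/2$, and the two-row model. This is precisely how the paper argues in Section~\ref{sec:limD}, resting on Lemma~\ref{L:Corr-sums-D}, which in turn requires Theorems~\ref{T:partitionD} and~\ref{T:Dprob}.

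However, as written the proposal has a genuine gap and an error. The heart of the matter --- closed forms for $\Row_k$, $\Col_k$, $\Hd_k$, $\Hu_k$ in type $D$ --- is only announced as an ``expected output''; the paper spends Theorems~\ref{T:partitionD}, \ref{T:Dprob} and Lemma~\ref{L:Corr-sums-D} on this, and note that each of these four sums is itself a difference of terms with denominators $Z_{n,k}^{D}$ and $Z_{n,k-1}^{D}$, not a single ratio with denominator $Z_{n,k}^{D}$ as you predict; the two-term formula for $c_i$ emerges only after adding the four sums, so the structure of the computation is not quite the telescoping you describe. Moreover, the one boundary check you do carry out is wrong: at $i=n$ the second term of $c_i$ does \emph{not} vanish. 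Indeed
\[
\frac{i-2}{n+i-3}\binom{2n-3}{n-i+1}\Big|_{i=n}=\frac{n-2}{2n-3}\,(2n-3)=n-2,
\qquad
Z_{n,n-1}^{D}=\binom{2n-2}{1}+\binom{2}{1}=2n,
\]
so the second term equals $(n-2)/(2n)$, while the first term equals $\tfrac12$ (since $Z_{n,n}^{D}=1$); the claimed simplification is $c_n=\tfrac12-\tfrac{n-2}{2n}=\tfrac1n$. Had the second term vanished you would obtain $\tfrac12$, which is wrong for $n>2$. Finally, the paper also checks $n=2$ separately (where $c_1=\tfrac12\neq 0$), which your sketch should make explicit alongside the restriction $c_1=0$ for $n>2$.
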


The expression for the limiting direction is the same as for type \hatB{} in \eqref{eq:limB}.
The rest of the proof is very similar to the one for type $B$ and is postponed to Section \ref{sec:limD}.
Since the formula for the limiting direction is not very explicit, we give some data in Table~\ref{tab:d-ci values}.

\begin{table}[htbp!]
\[
\begin{array}{c|cccccc}
n \backslash i & 1 & 2 & 3 & 4 & 5 & 6 \\
\hline \\[-0.3cm]
2 & \frac{1}{2} & \frac{1}{2} \\[0.2cm]
3 & 0 & \frac{1}{6} & \frac{1}{3} \\[0.2cm]
4 & 0 & \frac{5}{58} & \frac{19}{116} & \frac{1}{4} \\[0.2cm]
5 & 0 & \frac{7}{130} & \frac{147}{1495} & \frac{17}{115} & \frac{1}{5}  \\[0.2cm]
6 & 0 & \frac{21}{562} & \frac{1077}{16298} & \frac{381}{3886} & \frac{53}{402} & \frac{1}{6}
\end{array}
\]
\caption{Table of values of $c_i$ from Theorem~\ref{thm:lim-dir-d} for small values of $n$.
Note that $n=2$ is a special case.}
\label{tab:d-ci values}
\end{table}

\begin{remark}
Note that the sum of positive roots for $D_n$ (from Table~\ref{tab:root-data}) is
\[
\sum_{i<j} (e_j - e_i) + \sum_{i<j} (e_i+e_j) = \sum_i (2i-2) \; e_i.
\] 
Although Lam seems to suggest that the limiting direction in type $D$ should be close to this value in~\cite[Remark 5]{lam-2015}, 
the data from Table~\ref{tab:d-ci values} suggests that this is not the case.
\end{remark}

\subsection{Type \hatC{} and \chB{}}\label{ss:hatC}

We do not consider the remaining cases of types \hatC{} and \chB{} in detail. Our techniques can be applied to these cases as well, but we do not work these out because the formulas are not as nice. 

If we write the limiting direction of Lam's random walk on the alcoves of the affine Weyl group $B_n$ (resp. $C_n$) with probabilities weighted by dual Kac labels (resp. Kac labels)
as $\sum_{i=1}^n c_i e_i$, then the values of $c_i$ are as given in Table~\ref{tab:chb-ci values} (resp. Table~\ref{tab:hatc-ci values}).

\begin{table}[htbp!]
\[
\begin{array}{c|cccc}
n \backslash i & 1 & 2 & 3 & 4 \\
\hline \\[-0.3cm]
2 & \frac{1}{10} & \frac{2}{5} \\[0.2cm]
3 & \frac{1}{22} & \frac{13}{77} & \frac{2}{7} \\[0.2cm]
4 & \frac{5}{186} & \frac{326}{3441} & \frac{52}{333} & \frac{2}{9}
\end{array}
\]
\caption{Table of values of $c_i$ for type \chB{} for small values of $n$.}
\label{tab:chb-ci values}
\end{table}

\begin{table}[htbp!]
\[
\begin{array}{c|cccc}
n \backslash i & 1 & 2 & 3 & 4 \\
\hline \\[-0.3cm]
1 & \frac{1}{2} \\[0.2cm]
2 & \frac{1}{6} & \frac{1}{3} \\[0.2cm]
3 & \frac{5}{58} & \frac{19}{116} & \frac{1}{4} \\[0.2cm]
4 & \frac{7}{130} & \frac{147}{1495} & \frac{17}{115} & \frac{1}{5}  \\[0.2cm]
\end{array}
\]
\caption{Table of values of $c_i$ for type \hatC{} for small values of $n$.}
\label{tab:hatc-ci values}
\end{table}

\begin{remark}
\label{rem:cequald}
Since the data in Tables~\ref{tab:d-ci values} and \ref{tab:hatc-ci values} are the same, it seems that the limiting directions for Lam's random walk for $D_{n+1}$ and for $C_n$ have similar formulas. Although this should be provable by plugging in $\alpha = \beta	= 1/2$ in Theorem~\ref{thm:semiperm-pf}, we do not have a conceptual understanding of this phenomenon. It would be interesting to understand this better.
\end{remark}

\section{MultiTASEP Models}
\label{sec:models}

The proofs of our theorems will follow from the analysis of certain discrete-time Markov chains known as totally asymmetric simple exclusion processes (TASEPs).  In this section, we define the TASEPs that we will need to consider.
Although TASEPs can be defined for more general graphs, it will suffice for our purposes to consider these processes on the path graph with $n$ vertices. The particle configurations satisfy the constraint that there can be at most one particle at each vertex. The dynamics is as follows. The edge between a pair of neighboring vertices is chosen with a certain probability (that depends on the specifics of the model) and the particle on the left vertex of that edge (if it exists) moves to the site on the right vertex if it is vacant. If the edge chosen is the leftmost or the rightmost, the transitions can be different, again depending on the model.

We will need to consider {\em multispecies TASEPs}, or {\em multiTASEPs} in short, in which particles of several species are present, and where there is a total order among the particles. The dynamics is then that the `larger' particle exchanges with the `smaller' particle if the former is to the left of the latter. We will label the particles $\neg n, \dots, \neg 1, 1, \dots, n$, where $n$ is the number of vertices. We think of $\neg i$ as a particle of species $-i$ so that $\neg n$ is the slowest particle and $n$ is the fastest particle moving right. In each of the multiTASEPs below, configurations consist of words of length $n$ in the alphabet $\{\neg n, \dots, \neg 1, 1, \dots, n\}$
where there is exactly one of either $i$ or $\neg i$ present for all $1 \leq i \leq n$. The sites are labeled $1,\dots, n$. In the definitions of the processes we use the Kac labels and the dual Kac labels; see Table \ref{T:Kac}. We will focus on types $\chC, \hatB$ and $\D$. The processes for types $\hatC$ and $\chB$ can be constructed in a completely analogous manner.

\subsection{\cmt{} definition}
\label{sec:cmodel}
In the \cmt{} an edge is chosen between sites $\ell$ and $\ell+1$ with probability $1/(n+1)$ for $0 \leq \ell \leq n$. If $\ell=0$ (resp. $\ell=n$), we take the edge to be the left (resp. right) boundary.
The transitions rules are given in Table~\ref{tab:cmt-transitions}.

\begin{table}[htbp!]
\resizebox{\textwidth}{!}{
\begin{tabular}{|c|c||c|c||c|c|}
\hline
\multicolumn{2}{|c||}{First site} & \multicolumn{2}{c||}{Bulk} & \multicolumn{2}{c|}{Last site} \\
Transition & Probability & Transition & Probability & Transition & Probability \\
\hline 
&&&&&\\[-0.3cm]
$\neg{k} \to k$ & $\ds \frac{1}{n+1}$ & $j i \to i j  $
& $\ds\frac{1}{n+1}$ & $k \to \neg{k}$ & $\ds \frac{1}{n+1}$  \\[0.4cm]
\hline
\end{tabular}
}
\vspace{0.5cm}
\caption{Transitions for the \cmt{}, where $\neg n \leq i < j \leq n$ and $1 \leq k \leq n$. Here the bulk includes the first and the last two sites.}
\label{tab:cmt-transitions}
\end{table}

\subsection{\bmt{} definition}
\label{sec:bmodel}

In the \bmt{}, an edge is chosen between sites $\ell$ and $\ell+1$ with probability $1/n$ for $0 \leq \ell \leq n-2$, and with probability $1/2n$ for $\ell=n-1$ or $\ell=n$. If $\ell=0$, the transition changes the sign of the first label. 
If $1\le \ell \le n-1$, the transition results in exchanging the labels at sites $\ell$ and $\ell+1$. If $\ell=n$, it exchanges the last two labels and changes their signs.
The transitions rules are given in Table~\ref{tab:bmt-transitions}.

\begin{table}[htbp!]
\renewcommand{\arraystretch}{1.2}
\resizebox{\textwidth}{!}{
\begin{tabular}{|c|c||c|c||c|c|}
\hline
\multicolumn{2}{|c||}{First site} & \multicolumn{2}{c||}{Bulk} & \multicolumn{2}{c|}{Last two sites} \\
Transition & Probability & Transition & Probability & Transition & Probability \\
\hline 
&&&&&\\[-0.3cm]
\multirow{8}{*}{$\neg{i} \to i$} & \multirow{8}{*}{$\ds \frac{1}{n}$} & 
\multirow{8}{*}{$m k \to k m  $} & 
\multirow{8}{*}{$\ds\frac{1}{n}$} & 
$j i \to \neg{i} \neg{j}$ & \multirow{8}{*}{$\ds \frac{1}{2n}$}  \\
&&&& $j i \to i j$ & \\
&&&& $j \neg{i} \to i \neg{j}$ &  \\
&&&& $j \neg{i} \to \neg{i} j$ & \\
&&&& $i j \to \neg{j} \neg{i}$ &  \\
&&&& $i \neg{j} \to \neg{j} i$ &  \\
&&&& $\neg{i} j \to \neg{j} i$ &  \\
&&&& $\neg{i} \neg{j} \to \neg{j} \neg{i}$ & \\[0.1cm]
\hline
\end{tabular}
}
\vspace{0.5cm}
\caption{Transitions for the \bmt{}, where $\neg n \leq k< m \leq n$ and $1 \leq i < j \leq n$. Here the bulk includes the first two sites.}
\label{tab:bmt-transitions}
\end{table}

\subsection{\dmt{} definition}
\label{sec:dmodel}

Configurations in this model consist of words as before with the extra condition that there are an even number of negative entries in each word. The transitions are as follows: an edge is chosen between sites $\ell$ and $\ell+1$ with probability $1/(n-1)$ for $2 \leq \ell \leq n-2$, and with probability $1/2(n-1)$ for $\ell=0, 1, n-1, n$. 
If $1\le \ell \le n-1$, the transition results in exchanging the labels at sites $\ell$ and $\ell+1$. If $\ell=0$ or $\ell=n$, it exchanges the labels of the first or last two sites, respectively, and changes their signs.
The transitions rules are given in Table~\ref{tab:dmt-transitions}.

\begin{table}[htbp!]
\renewcommand{\arraystretch}{1.2}
\resizebox{\textwidth}{!}{
\begin{tabular}{|c|c||c|c||c|c|}
\hline
\multicolumn{2}{|c||}{First two sites} & \multicolumn{2}{c||}{Bulk} & \multicolumn{2}{c|}{Last two sites} \\
Transition & Probability & Transition & Probability & Transition & Probability \\
\hline 
&&&&&\\[-0.3cm]
$\neg{i} \neg{j} \to j i$ & \multirow{8}{*}{$\ds \frac{1}{2(n-1)}$} & 
\multirow{8}{*}{$m k \to k m  $} & 
\multirow{8}{*}{$\ds\frac{1}{n-1}$} & 
$j i \to \neg{i} \neg{j}$ & \multirow{8}{*}{$\ds \frac{1}{2(n-1)}$}  \\
$\neg{i} \neg{j} \to \neg{j} \neg{i} $ &&&& $j i \to i j$ & \\
$i \neg{j} \to j \neg{i}  $ &&&& $j \neg{i} \to i \neg{j}$ &  \\
$i \neg{j} \to \neg{j} i $ &&&& $j \neg{i} \to \neg{i} j$ & \\
$\neg{j} \neg{i} \to i j  $ &&&& $i j \to \neg{j} \neg{i}$ &  \\
$j \neg{i} \to \neg{i} j $ &&&& $i \neg{j} \to \neg{j} i$ &  \\
$\neg{j} i \to \neg{i} j $ &&&& $\neg{i} j \to \neg{j} i$ &  \\
$j i \to i j  $ &&&& $\neg{i} \neg{j} \to \neg{j} \neg{i}$ & \\[0.1cm]
\hline
\end{tabular}
}
\vspace{0.5cm}
\caption{Transitions for the \dmt{}, where $\neg n \leq k < m \leq n$ and $1 \leq i < j \leq n$.}
\label{tab:dmt-transitions}
\end{table}

In a similar way one may define $\hatC$-TASEP and $\chB$-TASEP. We omit the details.

\section{Lumping to Two-Species TASEPs}
\label{sec:color}

Given a Markov chain $(X_t)$ on a state space $\Omega$ and an equivalence relation $\sim$ on $\Omega$, we naturally obtain a partition $\mathcal{S}$ of $\Omega$ into subsets. For $\omega \in \Omega$, let $[\omega]$ denote the subset containing $\omega$. One can project $(X_t)$ to a stochastic process $(Y_t)$ on 
$\mathcal{S}$ by setting $Y_t = [X_t]$.
In general, $(Y_t)$ will not be a Markov process. 
Set $\mathbb{P}_X(\sigma \to [\omega']) = \sum_{\sigma' \in [\omega']} \mathbb{P}_X(\sigma \to \sigma')$.
In the special case that $\mathbb{P}_X(\sigma \to [\omega']) = \mathbb{P}_X(\tau \to [\omega'])$ for all $\sigma,\tau \in [\omega]$, then $(Y_t)$ becomes a Markov process and we say that $Y$ is a {\em lumping} of $X$. Equivalently, we can express lumping by saying that the following diagram commutes
\begin{equation}
\label{comm-diag}
\begin{tikzcd}
\Omega \arrow[r, "M_X"] \arrow[d, "\begin{bmatrix} \cdot \end{bmatrix}"]
	& \Omega\arrow[d, "\begin{bmatrix} \cdot \end{bmatrix}"] \\
\mathcal{S}\arrow[r, "M_Y"]
	& \mathcal{S}
\end{tikzcd},
\end{equation}
where $M_X$ and $M_Y$ are the transition matrices for the two processes.
One important consequence of lumping for our purposes is the relationship between the stationary distributions of the two processes.

\begin{prop}
\label{prop:lump-ss}
Let $(X_t)$ be a Markov chain on $\Omega$ and $(Y_t)$ be a lumping of $(X_t)$ on a partition  $\mathcal{S}$ of $\Omega$. Further, let $\pi_X$ and $\pi_Y$ denote the respective stationary distributions. Then, for each $S \in \mathcal{S}$,
\[
\pi_Y(S) = \sum_{\sigma \in S} \pi_X(\sigma).
\]
\end{prop}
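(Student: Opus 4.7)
The plan is to verify the proposition by the standard uniqueness-of-stationary-distribution argument: define a candidate measure on $\mathcal{S}$ by pushing $\pi_X$ forward through the partition, check that it is $M_Y$-invariant, and appeal to uniqueness.

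First I would define $\tilde{\pi}(S) := \sum_{\sigma \in S} \pi_X(\sigma)$ for each $S \in \mathcal{S}$; this is clearly a probability distribution on $\mathcal{S}$ since $\pi_X$ is a probability distribution on $\Omega$ and the sets $S$ partition $\Omega$. The goal then reduces to showing $\tilde{\pi} M_Y = \tilde{\pi}$, so that by uniqueness of the stationary distribution (which I will assume is tacitly in force, as is standard in the lumping literature; irreducibility of $Y$ follows from irreducibility of $X$) we conclude $\pi_Y = \tilde{\pi}$.

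To verify invariance, I would fix $S' \in \mathcal{S}$ and compute
\[
(\tilde{\pi} M_Y)(S') = \sum_{S \in \mathcal{S}} \tilde{\pi}(S)\, M_Y(S, S') = \sum_{S \in \mathcal{S}} \sum_{\sigma \in S} \pi_X(\sigma)\, M_Y(S, S').
\]
Here the key input is the lumping hypothesis: for every $\sigma \in S$ one has $M_Y(S, S') = \mathbb{P}_X(\sigma \to S') = \sum_{\sigma' \in S'} M_X(\sigma, \sigma')$, which is precisely the commutativity of diagram \eqref{comm-diag}. Substituting and interchanging the two finite sums, the double sum over $S$ and $\sigma \in S$ collapses to a single sum over $\sigma \in \Omega$, giving
\[
(\tilde{\pi} M_Y)(S') = \sum_{\sigma' \in S'} \sum_{\sigma \in \Omega} \pi_X(\sigma)\, M_X(\sigma, \sigma') = \sum_{\sigma' \in S'} \pi_X(\sigma') = \tilde{\pi}(S'),
\]
where the middle equality uses the stationarity of $\pi_X$ for $M_X$.

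There is no real obstacle here; the only subtle point worth flagging is that the argument uses the lumping hypothesis in the form $M_Y(S, S') = \mathbb{P}_X(\sigma \to S')$ for \emph{any} $\sigma \in S$, which is exactly what makes $Y$ a Markov chain in the first place. If one wished to avoid invoking uniqueness, one could equivalently phrase the conclusion as: the measure $\tilde{\pi}$ is \emph{a} stationary distribution for $Y$, and in all the TASEP applications of this paper the chain $Y$ will manifestly be irreducible on its state space, so $\pi_Y = \tilde{\pi}$ is forced.
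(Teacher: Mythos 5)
Your proof is correct: defining the pushforward measure, checking $M_Y$-invariance via the lumping identity $M_Y(S,S') = \sum_{\sigma' \in S'} M_X(\sigma,\sigma')$ for any $\sigma \in S$, and concluding by uniqueness is exactly the standard argument. The paper states Proposition~\ref{prop:lump-ss} without proof, treating it as a known fact about lumped chains, so your write-up (including the remark that irreducibility of the lumped chain, inherited from $X$ or evident in the TASEP applications, supplies the needed uniqueness) matches what the authors implicitly rely on.
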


We will use various lumpings to compute stationary probabilities and correlations in the \cmt{}, the \bmt{} and the \dmt{}. To help the reader keep track of all the lumpings used in this article, we summarise these in Figure~\ref{fig:lumpings}.

\begin{figure}[htbp!]
\centering
\includegraphics[scale=0.8]{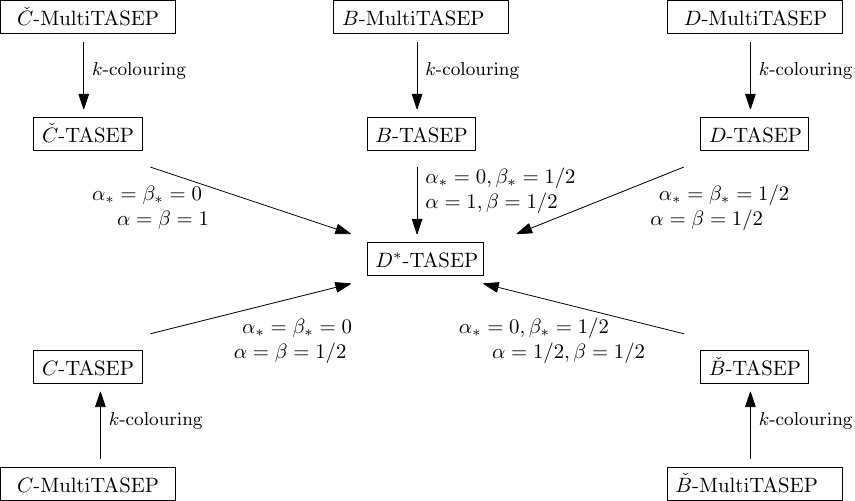}
\caption{All the TASEPs used in this article and their interrelations. All arrows correspond to lumpings. For completeness, we have also included the $\hatC$-MultiTASEP and $\chB$-MultiTASEP although we do not discuss the details in this article.}
\label{fig:lumpings}
\end{figure}

\subsection{Two-species TASEPs}

In this section, we will define two-species TASEPs on a finite one-dimensional lattice of size $n$, where the number of vacancies (i.e. $0$'s) is fixed to be $n_0$. Let
\[
\Omega_{n,n_0} = \{ \tau \in \{\1,0,1\}^n \mid \text{the number of 0's in $\tau$ is $n_0$} \}.
\]

We give the details for \ct{}, \bt{} and \dt{} whose transition probabilities are given in Tables~\ref{tab:ct-transitions}, \ref{tab:bt-transitions} and \ref{tab:dt-transitions}. We omit the details for $\hatC$-TASEP and $\chB$-TASEP.

\begin{table}[htbp!]
\resizebox{\textwidth}{!}{
\begin{tabular}{|c|c||c|c||c|c|}
\hline
\multicolumn{2}{|c||}{First site} & \multicolumn{2}{c||}{Bulk} & \multicolumn{2}{c|}{Last site} \\
Transition & Probability & Transition & Probability & Transition & Probability \\
\hline 
&&&&&\\[-0.3cm]
\multirow{3}{*}{$\1 \to 1$} & 
\multirow{3}{*}{$\ds \frac{1}{n+1}$} & 
$1 \1 \to \1 1  $ & \multirow{3}{*}{$\ds\frac{1}{n+1}$} & 
\multirow{3}{*}{$1 \to \1$} & 
\multirow{3}{*}{$\ds \frac{1}{n+1}$} \\
&& $1 0 \to 0 1  $ & & & \\
&& $0 \1 \to \1 0  $ & &  &  \\[0.1cm]
\hline
\end{tabular}
}
\vspace{0.5cm}
\caption{Transitions for the \ct{}.}
\label{tab:ct-transitions}
\end{table}

\begin{table}[htbp!]
\resizebox{\textwidth}{!}{
\begin{tabular}{|c|c||c|c||c|c|}
\hline
\multicolumn{2}{|c||}{First site} & \multicolumn{2}{c||}{Bulk} & \multicolumn{2}{c|}{Last two sites} \\
Transition & Probability & Transition & Probability & Transition & Probability \\
\hline 
&&&&&\\[-0.3cm]
\multirow{6}{*}{$\1 \to 1$} & 
\multirow{6}{*}{$\ds \frac{1}{n}$} & && $1 1 \to \1 \1$ & \multirow{6}{*}{$\ds \frac{1}{2n}$}  \\
&&  & & $1 \1 \to \1 1$ & \\
&& $1 \1 \to \1 1  $ & \multirow{3}{*}{$\ds\frac{1}{n}$} & $0 1 \to \1 0$ &  \\
&& $1 0 \to 0 1  $ & & $0 \1 \to \1 0$ & \\
&& $0 \1 \to \1 0 $ && $1 0 \to 0 \1$ &  \\
&&&& $1 0 \to 0 1$ &  \\[0.1cm]
\hline
\end{tabular}
}
\vspace{0.5cm}
\caption{Transitions for the \bt{}.}
\label{tab:bt-transitions}
\end{table}

\begin{table}[htbp!]
\renewcommand{\arraystretch}{1.2}
\resizebox{\textwidth}{!}{
\begin{tabular}{|c|c||c|c||c|c|}
\hline
\multicolumn{2}{|c||}{First two sites} & \multicolumn{2}{c||}{Bulk} & \multicolumn{2}{c|}{Last two sites} \\
Transition & Probability & Transition & Probability & Transition & Probability \\
\hline 
&&&&&\\[-0.3cm]
$\1 \1 \to 1 1$ & \multirow{6}{*}{$\ds \frac{1}{2(n-1)}$} & && $1 1 \to \1 \1$ & \multirow{6}{*}{$\ds \frac{1}{2(n-1)}$}  \\
$1 \1 \to \1 1$ && &  & $1 \1 \to \1 1$ & \\
$\1 0 \to 0 1  $ && $1 \1 \to \1 1  $ & \multirow{3}{*}{$\ds\frac{1}{n-1}$} & $0 1 \to \1 0$ &  \\
$1 0 \to 0 1 $ && $1 0 \to 0 1  $ && $0 \1 \to \1 0$ & \\
$0 \1 \to 1 0  $ && $0 \1 \to \1 0  $ && $1 0 \to 0 \1$ &  \\
$0 \1 \to \1 0 $ &&&& $1 0 \to 0 1$ &  \\[0.1cm]
\hline
\end{tabular}
}
\vspace{0.5cm}
\caption{Transitions for the \dt{}.}
\label{tab:dt-transitions}
\end{table}

We observe one symmetry property for the \ct{} and the \dt{} which will prove useful later.
The proof is an easy exercise and can be proved by a case analysis.

\begin{prop}
\label{prop:ctdt-symm}
The \ct{} and the \dt{} on $\Omega_{n,n_0}$ are invariant as Markov chains under the transformation
$\tau = (\tau_1,\dots,\tau_n) \to (-\tau_n, \dots, -\tau_1)$, where $-1$ and $- \1$ are to be interpreted as $\1$ and $1$ respectively.
\end{prop}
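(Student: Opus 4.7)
The plan is to exhibit $\phi(\tau_1, \ldots, \tau_n) := (-\tau_n, \ldots, -\tau_1)$ as an involutive symmetry of the transition matrix itself. First I would verify that $\phi$ is a bijection on $\Omega_{n,n_0}$: negation fixes $0$ and swaps $1 \leftrightarrow \1$, so the number of zero entries is preserved, and clearly $\phi \circ \phi = \mathrm{id}$. Invariance of the Markov chain then amounts to the claim that for every transition $\sigma \to \sigma'$ appearing in Table \ref{tab:ct-transitions} (resp.\ Table \ref{tab:dt-transitions}) with probability $p$, the transition $\phi(\sigma) \to \phi(\sigma')$ also appears in the same table with probability $p$.

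The analysis splits cleanly by position, since $\phi$ carries the edge at $(i, i+1)$ to the edge at $(n-i, n-i+1)$, so interior edges map to interior edges and the two boundary regions are swapped. On such an affected pair, $\phi$ sends $(\tau_i, \tau_{i+1})$ to $(-\tau_{i+1}, -\tau_i)$. For the three bulk rules $1 \1 \to \1 1$, $1 0 \to 0 1$, and $0 \1 \to \1 0$ common to both chains, a direct computation shows that $\phi$ fixes the first rule and exchanges the latter two; since all three share the common bulk probability ($1/(n+1)$ for the \ct{}, $1/(n-1)$ for the \dt{}), probabilities are matched automatically.

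It then remains to enumerate the boundary rules. For the \ct{}, $\phi$ simply swaps the unique left-boundary rule $\1 \to 1$ with the unique right-boundary rule $1 \to \1$, both carrying probability $1/(n+1)$. For the \dt{}, I would tabulate the six left-boundary rules and their $\phi$-images, checking that each image lands among the six right-boundary rules; for instance $\1 \1 \to 1 1$ pairs with $1 1 \to \1 \1$, $\1 0 \to 0 1$ pairs with $0 1 \to \1 0$, $1 0 \to 0 1$ pairs with $0 \1 \to \1 0$, and so on through all six. Since every \dt{} boundary rule carries the common probability $1/(2(n-1))$, the probability match is again automatic once the bijection of rules is established. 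I expect no real obstacle beyond the routine bookkeeping; the only subtlety is to remember that $\phi$ reverses the order within each affected pair, so e.g.\ the pattern $1 0$ on the left becomes $0 \1$ (not $0 1$) on the right.
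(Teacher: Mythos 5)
Your proposal is correct and follows exactly the route the paper has in mind: the paper simply asserts the result is an easy exercise via case analysis of the transition tables, and your argument is that case analysis spelled out (bijectivity of $\phi$ on $\Omega_{n,n_0}$, interior edges mapping to interior edges with the three bulk rules permuted among themselves, and the boundary rules of Tables~\ref{tab:ct-transitions} and \ref{tab:dt-transitions} pairing off left--right with equal probabilities). Your explicit pairings and the noted order-reversal subtlety all check out, so nothing further is needed.
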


We will now define a special class of lumpings for multiTASEPs. 
For each $X$-MultiTASEP on $n$ sites, where $X \in \{\hatB, \chB, \hatC, \chC, D \}$, we will define a lumping to the $X$-TASEP as follows. 
Fix $1 \leq k \leq n$. The {\em $k$-coloring} is a map $f_k: \Omega^{\text{X}}_n$ to $\Omega_{n,k}$ defined as follows:
\begin{equation}
f_k(\tau_1,\dots,\tau_n) = (f_k(\tau_1), \dots, f_k(\tau_n)),
\end{equation}
where
\[
f_k (i) = \begin{cases}
1 & i \geq k, \\
\1 & i \leq -k, \\
0 & -k < i < k.
\end{cases}
\]
The reason this map is called $k$-coloring is that if one imagines all particles to be of different colors, then this is the TASEP imagined through the eyes of a colorblind person, whose blindness is one that distinguishes only between `higher', `lower' and `medium' colors. 

\begin{prop}
\label{prop:kcoloring}
For $X \in \{\hatB, \chB, \hatC, \chC, D\}$, the $k$-coloring is a lumping from the $X$-MultiTASEP with $n$ sites to the $X$-TASEP with $n$ sites and $k-1$ $0$'s.
\end{prop}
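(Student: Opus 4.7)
The plan is to verify the commuting diagram \eqref{comm-diag} edge by edge, using the fact that in each $X$-MultiTASEP the edge $\ell$ is sampled independently of the current configuration with the same probabilities as in the corresponding $X$-TASEP. The key property of $f_k$ is that it is monotone with respect to the natural orders $\neg n < \cdots < \neg 1 < 1 < \cdots < n$ and $\neg 1 < 0 < 1$, and satisfies $f_k(-x) = -f_k(x)$ for $x \neq 0$. Consequently, a bulk multispecies swap $ji \to ij$ with $j > i$ projects either to a trivial move (when $f_k(i) = f_k(j)$) or to precisely one of the two-species bulk rules $1\,\neg 1 \to \neg 1\, 1$, $10 \to 01$, or $0\,\neg 1 \to \neg 1\, 0$, with the same probability. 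Thus all bulk edges in every $X$-MultiTASEP project consistently, with the probability depending only on the image pair.

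For the boundary edges I will proceed by checking each possible image pair $\sigma_\ell \sigma_{\ell+1}$ separately. For $\chC$ and $\hatB$, the first-site rules $\neg i \to i$ and, for $\chC$, the last-site rule $i \to \neg i$ project to $\neg 1 \to 1$, resp.\ $1 \to \neg 1$, with the full probability $p$ whenever $i \geq k$, and to the trivial move when $i < k$; this reproduces Tables \ref{tab:ct-transitions} and \ref{tab:bt-transitions}. The substantive check is the affine reflection in the highest root for $\hatB$ (last two sites) and in $\alpha_0 = e_1 + e_2$ for $\D$ (first two sites). In both cases the reflection acts as $(a,b) \mapsto (-b,-a)$ and is triggered by the sign of $a+b$. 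A direct analysis of the nine image pairs in $\{1, 0, \neg 1\}^2$ shows that in six of them the trigger condition on the preimage is determined by the image (either always firing or never firing), while in the three cases $1\,\neg 1$, $\neg 1\, 1$, $00$ the reflection is an involution on the image class and so acts trivially on the image, independently of whether it triggered in the preimage. In every case the resulting image transition probability matches Tables \ref{tab:bt-transitions} and \ref{tab:dt-transitions}.

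The main difficulty is this boundary analysis: one must verify that for each image pair the contributions from the two adjacent boundary edges, after projection, reproduce exactly the edge-weighted probabilities in the target two-species tables, paying particular attention to the three ambiguous image pairs where the trigger of the affine reflection does depend on the specific preimage but its net effect on the image does not. Once this bookkeeping is completed, the proposition follows immediately from \eqref{comm-diag}, and the omitted types $\hatC$ and $\chB$ are handled by a completely parallel argument.
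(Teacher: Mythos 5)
Your proposal is correct and takes essentially the same route as the paper's proof: a case-by-case verification that the $k$-coloring makes the diagram \eqref{comm-diag} commute, grouping multispecies transitions according to their image pairs. You in fact supply more of the boundary bookkeeping than the paper (which only illustrates the bulk transitions of the \cmt{} and leaves the boundary cases to the reader); your observation that the affine reflection $(a,b)\mapsto(-b,-a)$ fires according to the sign of $a+b$ and fixes the image class in the three ambiguous pairs $1\,\1$, $\1\,1$, $00$ is precisely the point needed to settle those cases.
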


\begin{proof}
We will check the commutativity \eqref{comm-diag} of the $k$-coloring for all possible transitions. In principle, this will have to be done on a case-by-case basis depending on the particles hopping. However, one can group many of these cases together. 

We illustrate this for the bulk transitions in the \cmt{}. The argument is similar for the other types.
Compare the central columns of Tables~\ref{tab:cmt-transitions} and \ref{tab:ct-transitions}.
If both $i,j > k$, then there is no transition in the \ct{}. Similarly, if $-k \leq i,j \leq k$ or $i,j < -k$. If $j > k \geq i \geq -k$, then the transition becomes $10 \to 01$. In both cases, the probability is $1/(n+1)$.
Similarly, one can check all other cases. 
For the boundary transitions, this is again a case analysis, and is left to the reader.
\end{proof}

\subsection{The \dstar{}}

We will prove results for the classical types by appealing to a two-species exclusion process known as the \dstar{} 
studied in~\cite{AALP-2019}. The \dstar{} is defined on a finite one-dimensional lattice of $n$ sites. On each site, we have exactly one particle from the set $\{*,1,0,\1\}$ subject to the following conditions:
\begin{itemize}
\item the number of $0$'s is fixed to be $n_0$;
\item sites $1$ and $n$ can only be occupied by $0$ and $*$;
\item sites $2$ through $n-1$ can only be occupied by $1,0$ and $\1$.
\end{itemize}
Let $\Omega^*_{n,n_0}$ denote the possible configurations. For example, 
\[
\Omega^*_{3,1} = \{ (0, \1, *), (0, 1, *), (*, \1, 0), (*, 1, 0), (*, 0, *) \}.
\]
Let $\alpha, \alpha_*, \beta, \beta_* \in [0,1].$ 

For our purposes, it will be convenient to think of the \dstar{} as a (discrete-time) Markov chain. Although this is defined as a continuous-time Markov process in~\cite{AALP-2019}, the stationary distributions in both cases are identical and that is what is relevant for us here.

The transitions are as follows.
With probability $1/(n-1)$, the edge between sites $\ell$ and $\ell+1$ is chosen, where $\ell\in [n-1]$. If $2 \leq \ell \leq n-2$, then a transition occurs interchanging the particles at sites $\ell$ and $\ell+1$ if 
the particle at site $\ell$ is larger than that at $\ell+1$, where we interpret $\1$ as $-1$. If $\ell=1$ or $\ell=n-1$ the probability of a transition is multiplied with the parameters according to Table \ref{tab:dstar-transitions}.
With the remaining probability, the configuration remains unchanged.

\begin{table}[htbp!]
\renewcommand{\arraystretch}{1.1}
\resizebox{\textwidth}{!}{
\begin{tabular}{|c|c||c|c||c|c|}
\hline
\multicolumn{2}{|c||}{First two sites} & \multicolumn{2}{c||}{Bulk} & \multicolumn{2}{c|}{Last two sites} \\
Transition & Probability & Transition & Probability & Transition & Probability \\
\hline 
&&&&&\\[-0.3cm]
$* \1 \to * 1$ & $\ds \frac{\alpha}{n-1}$ & $1 \1 \to \1 1  $ &  \multirow{3}{*}{$\ds\frac{1}{n-1}$} & $1 * \to \1 *$ & $\ds \frac{\beta}{n-1}$  \\[0.4cm]
$* 0 \to 0 1$ & $\ds \frac{\alpha_*}{n-1}$ & $1 0 \to 0 1  $ & & $0 * \to \1 0$ & $\ds \frac{\beta_*}{n-1}$  \\[0.4cm]
$0 \1 \to * 0$ & $\ds \frac{1}{n-1}$ & $0 \1 \to \1 0  $ & & $1 0 \to 0 *$ & $\ds \frac{1}{n-1}$  \\[0.2cm]
\hline
\end{tabular}
}
\vspace{0.5cm}
\caption{Transitions for the \dstar{}.}
\label{tab:dstar-transitions}
\end{table}

The stationary distribution of the \dstar{}, denoted $\pi^*$, was obtained using the technology of the matrix ansatz in~\cite{AALP-2019}. Here, it will be more convenient for us to obtain the stationary distribution using a process which lumps to the \dstar{}. This is done in Section~\ref{sec:duchi-schaeffer}.

As stated already in \cite{AALP-2019}, one can check that if $\alpha_*$ or $\beta_*$ are zero, then the \dstar{} is not ergodic. If the former, there are no outgoing transitions from states which begin with a $*$. Similarly for the latter. If $\alpha_* = \beta_* = 0$, there are no outgoing transitions from states which both begin and end with a $*$. 
The following result is then easy to see.

\begin{prop}
\label{prop:dstar-irred}

Suppose $\alpha, \beta > 0$ and $\alpha_*, \beta_* \geq 0$.

\begin{enumerate}
\item If both $\alpha_*$ or $\beta_*$ are nonzero,  the \dstar{} is irreducible.

\item If $\alpha_* = 0$ and $\beta_* \neq 0$, the \dstar{} restricted to configurations which have a $*$ at the first site is irreducible.

\item If $\alpha_* = \beta_* = 0$, the \dstar{} restricted to configurations which have a $*$ at both the first and last sites is irreducible.

\end{enumerate}
\end{prop}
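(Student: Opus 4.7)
The plan is to prove each part by establishing strong connectivity of the directed transition graph of the \dstar{} on the relevant (sub-)state space. In each case, I would designate a canonical sorted hub configuration (with bulk profile $\1^a 0^b 1^c$) and verify that every other configuration in the ergodic class both reaches the hub and is reached by the hub.

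For cases (2) and (3), one first checks that the asserted subsets are forward-invariant: when $\alpha_* = 0$, the only rule that deletes a $*$ at site $1$ is $*0 \to 01$, which now has rate zero, so a $*$ at site $1$ stays there, and analogously for a $*$ at site $n$ when $\beta_* = 0$. This justifies the restricted state space. The main engine then consists of three observations. (i) A sorting-by-inversions argument shows the bulk rules $1\1 \to \1 1$, $10 \to 01$, $0\1 \to \1 0$ drive any interior word into the unique sorted form $\1^a 0^b 1^c$. (ii) Since $\alpha, \beta > 0$, the boundary flips $*\1 \to *1$ and $1 * \to \1 *$ are available whenever the relevant two-letter pattern is present; combined with bulk sorting they yield the ``boundary exchanges'' $(a,c) \to (a-1, c+1)$ and $(a,c) \to (a+1, c-1)$, so all sorted states with common $b$ and $a+c$ are mutually reachable. (iii) To move between sectors with different numbers of $*$'s (needed in cases (1) and partly (2)), one uses the creation rules $0\1 \to *0$ at the left and $10 \to 0 *$ at the right (both of rate $1$), with the destruction rules $*0 \to 01$ (rate $\alpha_*$) and $0 * \to \1 0$ (rate $\beta_*$) closing the cycles when those rates are positive. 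All four are available in case (1); only the right-boundary pair is used in case (2); in case (3) neither is needed since both $*$'s are pinned and only (i), (ii) are invoked.

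The main obstacle is the bookkeeping required for the reverse direction---reaching an arbitrary non-sorted state from the sorted hub---since bulk sorting is strictly one-way. The remedy is that non-sorted configurations arise as intermediate states along paths built from boundary flips followed by partial sorting: at each step one may elect whether to apply the next bulk swap, so the state graph contains every partially-sorted intermediate, not just the fully sorted endpoint. A clean formalization proceeds by induction on a composite potential (for instance, the interior inversion count plus the discrepancy between the current and target $*$-counts); every non-hub state should admit at least one positive-rate transition strictly decreasing this potential. The most delicate situations are the extremal ones---$n_0 \in \{0, n-1, n\}$ or small $n$---where a required species may be temporarily absent; these are handled by direct inspection rather than any new idea.
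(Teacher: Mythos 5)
The paper itself records this proposition with no written proof (it is stated as ``easy to see'' from the transition table), so your proposal has to stand on its own. Its skeleton is sound: the forward-invariance check for parts (2) and (3) is correct (the only transitions destroying a $*$ at site $1$ or site $n$ carry the rates $\alpha_*$, $\beta_*$ respectively); bulk sorting drives any interior word to a sorted form $\1^a0^b1^c$; the boundary flips $*\,\1\to *\,1$ and $1\,*\to\1\,*$ (rates $\alpha,\beta>0$) connect all sorted states with the same $0$-count; and the moves $0\,\1\to *\,0$, $1\,0\to 0\,*$ together with $*\,0\to 0\,1$, $0\,*\to\1\,0$ connect the sectors with different boundary content exactly when the corresponding rates are positive.

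However, the step you yourself single out as the crux --- reaching an arbitrary configuration \emph{from} the sorted hub --- is not actually resolved. Your proposed formalization (every non-hub state admits a positive-rate transition strictly decreasing an inversion-type potential) proves only that every state can reach the hub, i.e. the easy direction; it gives no information about paths out of the hub. The informal remedy (``every configuration occurs as a partially sorted intermediate of flip-then-sort paths'') is precisely the assertion that needs proof, and it is not routine: a naive induction on the inversion count of the target fails because some states have only in-neighbours that are strictly more inverted. For instance, in case (3) with $n=6$, $n_0=2$, the configuration $(*,0,1,\1,0,*)$ has exactly two in-neighbours inside the restricted class, namely $(*,1,0,\1,0,*)$ and $(*,0,1,0,\1,*)$, both obtained by bulk moves and both with strictly more inversions; no boundary flip can produce it. Completing the argument requires an explicit construction of a path from a sorted state to an arbitrary target (e.g. injecting $1$'s at the left boundary and $\1$'s at the right boundary and sweeping them across, while shepherding the $0$'s, which move only passively when particles pass them), or a correctly chosen well-founded order adapted to the target rather than to the hub. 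As written, this is a genuine gap, even though the overall plan is salvageable.
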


We are now in a position to prove the main result of this section.

\begin{theorem} \leavevmode
\label{thm:lump}
\begin{enumerate}

\item In the \dstar{} with $n+2$ sites and $n_0$ $0$'s and $\alpha_* = \beta_* = 0, \alpha = \beta = 1$, the marginal process of sites $2$ through $n+1$ is isomorphic to the \ct{} on $n$ sites with $n_0$ $0$'s.

\item The \bt{} with $n$ sites and $n_0$ $0$'s lumps to the marginal process of sites $2$ through $n+1$ of the \dstar{} on $n+1$ sites with $n_0$ $0$'s and $\alpha_* = 0, \alpha = 1, \beta = \beta_* = 1/2$.

\item The \dt{} with $n$ sites and $n_0$ $0$'s lumps to the \dstar{} on $n$ sites with $n_0$ $0$'s and $\alpha = \alpha_* = \beta = \beta_* = 1/2$.

\end{enumerate}
\end{theorem}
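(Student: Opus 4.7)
The plan is to verify each of the three parts by direct comparison of transition rates, using Proposition~\ref{prop:dstar-irred} to pin down the appropriate irreducible component in the degenerate parameter regimes.

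For part (1), the specialization $\alpha_*=\beta_*=0$ confines the \dstar{} on $n+2$ sites to the component of Proposition~\ref{prop:dstar-irred}(3), in which sites $1$ and $n+2$ are frozen at $*$. Removing these sites leaves precisely $\Omega_{n,n_0}$, the state space of the \ct{}, and I would match each surviving transition term by term. The bulk rules $1\1\to\1 1$, $10\to 01$, $0\1\to\1 0$ transfer directly at rate $1/(n+1)$; at the boundaries only $*\1\to*1$ and $1*\to\1*$ survive, each at rate $\alpha/(n+1)=\beta/(n+1)=1/(n+1)$, projecting to the \ct{} transitions $\1\to 1$ and $1\to\1$; and the remaining boundary rules either have rate zero (by $\alpha_*=\beta_*=0$) or would escape the irreducible component. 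This gives a genuine isomorphism of Markov chains rather than a mere lumping.

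For parts (2) and (3), I would introduce a lumping map $L$ that sends $\{1,\1\}\mapsto *$ at each boundary site which the \dstar{} marks as $*$: in (3) at both ends of the $n$-site chain, giving classes of size up to $4$; in (2) only at the last site, since $\alpha_*=0$ already freezes site $1$ of the $(n+1)$-site \dstar{} at $*$ by Proposition~\ref{prop:dstar-irred}(2), so classes have size $1$ or $2$. Verifying the commutativity of \eqref{comm-diag} reduces to two kinds of boundary checks. A \dstar{}-rule whose source involves a $*$ (for instance $*\1\to*1$, $1*\to\1*$, $*0\to 01$, or $0*\to\1 0$) must be the image of one transition of the \bt{} or \dt{} from each preimage in its size-two source class, each contributing the same rate $1/(2(n-1))$ in (3) or $1/(2n)$ in (2), matching $\alpha/(n-1)$, $\beta/(n-1)$, etc.---this is exactly why the theorem specializes the relevant parameters to $1/2$. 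Conversely, a \dstar{}-rule whose source is a singleton class (such as $0\1\to *0$ or $10\to 0*$, at rate $1/(n-1)$ or $1/n$) arises by summing two distinct \bt{}- or \dt{}-transitions from the same preimage (e.g.\ $0\1\to 10$ and $0\1\to\1 0$, or $10\to 0\1$ and $10\to 01$). Proposition~\ref{prop:ctdt-symm} reduces the last-two-sites analysis in (3) to its first-two-sites counterpart.

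The main obstacle is organizational: one must enumerate every boundary configuration and verify preimage-independence, namely that each member of a size-two or size-four lumped class transitions into every other class at the same total rate. The subtle point is that the parameter choices in the theorem are precisely those that reconcile the one-per-preimage averaging forced by the \bt{} and \dt{} rules with the summed rates appearing on the \dstar{} side; with other parameter choices the lumping would fail. Once this bookkeeping is complete, Proposition~\ref{prop:lump-ss} lets one transport stationary-distribution and correlation computations from the \dstar{} back down to the three Weyl-group TASEPs of interest.
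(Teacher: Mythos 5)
Your proposal is correct and follows essentially the same route as the paper: part (1) by restricting to the irreducible component of Proposition~\ref{prop:dstar-irred}(3) and matching the tables directly, and parts (2)--(3) by identifying $1$ and $\1$ with $*$ at the relevant boundary sites and checking preimage-independence of the rates, including the key observation that the two transitions $10 \to 0\1$ and $10 \to 01$ sum to the \dstar{} rate for $10 \to 0*$ while the paired boundary transitions each match $\beta=\beta_*=1/2$ (resp.\ $\alpha=\alpha_*=1/2$). Your use of Proposition~\ref{prop:ctdt-symm} to halve the case analysis for the \dt{} is a harmless shortcut where the paper simply says the \dt{} case is analogous.
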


\begin{proof}
From Proposition~\ref{prop:dstar-irred}(3) and Tables~\ref{tab:ct-transitions} and \ref{tab:dstar-transitions}, the proof of (1) immediately follows.

The idea for the proofs of parts (2) and (3) is another kind of coloring argument, namely for particles $1$ and $\1$ can be identified at the endpoints. For the \bt{} with $n$ sites and $n_0$ $0$'s, this identification is done at site $n$ and the resulting `particle' is labelled $*$. For the \dt{} with $n$ sites and $n_0$ $0$'s, this identification is done for both sites $1$ and $n$. We will give the idea of the proof for the \bt{}. Similar ideas hold for the \dt{}.

Using Proposition~\ref{prop:dstar-irred}(2) and comparing Tables~\ref{tab:bt-transitions} and \ref{tab:dstar-transitions}, it is clear that the transitions at the first site and the bulk are unaffected, just as
for \ct{} the first site of \dstar{} stays a $*$. 
We only need to compare transitions at the last two sites. For the reader's convenience, we reproduce the transition rates here after adjusting the value of $n$ and setting $\beta = \beta_* = 1/2$.
\[
\begin{array}{|c|c|c|c|}
\hline
\multicolumn{2}{|c|}{\text{\bt{}}} & \multicolumn{2}{c|}{\text{\dstar{}}} \\
\text{Transition} & \text{Probability} & \text{Transition} & \text{Probability} \\
\hline
1 1 \to \1 \1 & \multirow{6}{*}{$\ds \frac{1}{2n}$}  & \multirow{2}{*}{$1 * \to \1 * $} & \multirow{2}{*}{$\ds \frac{1}{2n} $}  \\
1 \1 \to \1 1 & & &\\[0.1cm]
0 1 \to \1 0 &  & \multirow{2}{*}{$0 * \to \1 0$}  & \multirow{2}{*}{$\ds \frac{1}{2n}$}   \\
0 \1 \to \1 0 & & &\\[0.1cm]
1 0 \to 0 \1 &  & \multirow{2}{*}{$1 0 \to 0 *$}  & \multirow{2}{*}{$\ds \frac{1}{n} $}  \\
1 0 \to 0 1 &  & & \\[0.1cm]
\hline
\end{array}
\]
By identifying $1$ and $\1$ to $*$ in the last site, the six transitions in the first column become the transitions in the third column. It is also easy to see that the rates match correctly. In particular, the rates of the last pair of transitions on the left add up to give the last rate on the right. 
This completes the proof.
\end{proof}

As a consequence of Proposition~\ref{prop:kcoloring} and Theorem~\ref{thm:lump}, it is enough to study correlations in the \dstar{} to determine the necessary correlations in the various multiTASEPs.
This is the major technical part of this work and is taken up in Section~\ref{sec:part}.
First, we study a two-row process in Section \ref{sec:duchi-schaeffer}.

\section{The Two-Row $D^*$ Process}
\label{sec:duchi-schaeffer}

In this section, we define a Markov chain we call the {\em two-row $D^*$ process} and we will prove that it lumps to the \dstar{}.
The strategy is similar to the one used by Duchi and Schaeffer~\cite{duchi-schaeffer-2005} for what they call the 3-TASEP.

The configurations for the two-row $D^*$ process are as follows.
A \emph{two-row configuration} is a pair of rows of $n$ sites, each of which contains exactly one of  $\neg{1}$, $0$, $1$ or $*$, satisfying the following conditions:
\begin{itemize}
\item A $0$ or a $*$ occurs in the top row of a column if and only if it occurs in the bottom row of the same column. A column
containing $0$'s or $*$'s is called a {\em $0$-column} or a {\em $*$-column}, respectively.

\item Leftmost and rightmost columns can only be $0$- or $*$-columns. In addition, $*$-columns cannot appear elsewhere.

\item The {\em balance condition}: there is an equal amount of 1's and $\neg{1}$'s between any 0-columns.

\item The {\em positivity condition}: there are at least as many 1's as $\neg{1}$'s to the left of any column.

\end{itemize}

Let $\widehat{\Omega}^*_{n, n_0}$ be the set of two-row configurations with $n$ columns and $n_0$ 0-columns. 
For example,
\[
\widehat{\Omega}^*_{3, 1} = \left\{
\xx \bw \st, \; \xx \wb \st, \; \st \xx \st, \; \st \bw \xx, \; \st \wb \xx
\right\}
\]
and
\[
\widehat{\Omega}^*_{4, 0} = \left\{
\st \wb \wb \st, \; \st \wb \bw \st, \; \st \bb \ww \st, \; \st \bw \wb \st, \; \st \bw \bw \st
\right\}.
\]
For $\omega \in \widehat{\Omega}^*_{n, n_0}$, we say that the {\em wall $i$} is the vertical line between columns $i$ and $i+1$ for $1 \leq i \leq n-1$, and we denote by $\omega[i]$ the four sites around the wall $i$. 
Let $j_1 < i$ be the leftmost wall such that there are only $\neg{1}$'s on the top row between it and the wall $i-1$.
In particular, if there is no $\neg{1}$'s on the top row at site $i-1$, then $j_1 = i-1$.
Similarly, let $j_2 > i$ be the rightmost wall such that there are only $1$'s between the walls $i+1$ and $j_2$ on the top row.
To define the two-row $D^*$ process, we will first need a map $T_* : \widehat{\Omega}^*_{n, n_0} \times [n-1] \rightarrow \widehat{\Omega}^*_{n, n_0}$ given as follows. Let $\omega \in \widehat{\Omega}^*_{n, n_0}$ and $i \in [n-1]$. We describe $\omega' = T_*(\omega,i)$ now. There are three cases to consider:

\begin{itemize}
\item $2 \leq i \leq n - 2$, called a {\em bulk transition}: 

\begin{enumerate}[label=(B\arabic*)]

\item \label{it:B1}
If $\omega[i] =$ \bq$|$\wb~or \xx$|$\wb, then 
$\omega'$ is obtained by moving the $\wb$ from the right-hand side of $i$ to the right of the wall $j_1$. See Figure \ref{fig:b1_example} for an illustration.

\begin{figure}[htbp!]
\[
\scalebox{1.95}{${ \atop \bq}{\scalebox{0.5}{$j_1$} \atop |}{ \atop \wq}{ \atop {\dots \atop \dots}}{ \atop \wq}{ \atop \bq}{\scalebox{0.5}{$i$} \atop \textbf{\textbar}}{ \atop \wb}
{ \atop \rightarrow}
{ \atop \bq}{\scalebox{0.5}{$j_1$} \atop |}{ \atop \wb}{ \atop \wq}{ \atop {\dots \atop \dots}}{ \atop \wq}{\scalebox{0.5}{$i$} \atop |}{ \atop \bq}$ }
\]
\caption{A bulk transition of type~\ref{it:B1}.}
\label{fig:b1_example}
\end{figure}

\item \label{it:B2}
If $\omega[i] =$ \bq$|$\ww~or \bw$|$\xx, then 
$\omega'$ is obtained by removing the two particles that form the \dg~or the $\bw$ at $i$ and placing them at $j_2$ so that they form a \dg~if there is a $\neg{1}$ on the right-hand side of $j_2$ in the top row, or otherwise form a $\bw$ on the left-hand side of $j_2$. An illustration is provided in Figure \ref{fig:b2_example}.

\begin{figure}[htbp!]
\[\scalebox{1.95}{${ \atop \bq}{\scalebox{0.5}{$i$} \atop \textbf{\textbar}}{ \atop \ww}{ \atop \bq}{ \atop {\dots \atop \dots}}{ \atop \bq}{\scalebox{0.5}{$j_2$} \atop |}{ \atop \wq}
{ \atop \rightarrow}
{ \atop \wq}{\scalebox{0.5}{$i$} \atop |}{ \atop \bq}{ \atop {\dots \atop \dots}}{ \atop \bq}{ \atop \bq}{\scalebox{0.5}{$j_2$} \atop |}{ \atop \ww}$ }\]
\caption{A bulk transition of type~\ref{it:B2}.}
\label{fig:b2_example}
\end{figure}

\end{enumerate}

\item $i = 1$, called a {\em left border transition}:

\begin{enumerate}[label=(L\arabic*)]

\item \label{it:L1}
If $\omega[1] = \st|\wb$, we ignore the first site 
and obtain $\omega'$ by removing the $\wb$ on the left border and placing the particles at $j_2$ so that they form a \dg~if there is a $\neg{1}$ on the right-hand side of $j_2$ in the top row, or otherwise form a $\bw$ on the left-hand side of $j_2$. See Figure \ref{fig:l1_example} for an illustration.

\begin{figure}[htbp!]
\[
\scalebox{1.95}{${ \atop \st}{\scalebox{0.5}{$i = 1$} \atop \textbf{\textbar}}{ \atop \wb}{ \atop \bq}{ \atop {\dots \atop \dots}}{ \atop \bq}{\scalebox{0.5}{$j_2$} \atop |}{ \atop \xx}
{ \atop \rightarrow}
{ \atop \st}{{\scalebox{0.5}{$i = 1$} \atop |}}{ \atop \bq}{ \atop {\dots \atop \dots}}{ \atop \bq}{ \atop \bw}{\scalebox{0.5}{$j_2$} \atop |}{ \atop \xx}$ }
\]
\[
\scalebox{1.95}{${ \atop \st}{\scalebox{0.5}{$i = 1$} \atop \textbf{\textbar}}{ \atop \wb}{ \atop \bq}{ \atop {\dots \atop \dots}}{ \atop \bq}{\scalebox{0.5}{$j_2$} \atop |}{ \atop \wq}
{ \atop \rightarrow}
{ \atop \st}{{\scalebox{0.5}{$i = 1$} \atop |}}{ \atop \bq}{ \atop {\dots \atop \dots}}{ \atop \bq}{ \atop \bq}{\scalebox{0.5}{$j_2$} \atop |}{ \atop \ww}$ }
\]
\caption{Transitions of type~\ref{it:L1} at the left border.}
\label{fig:l1_example}
\end{figure}

\item \label{it:L2}
If $\omega[1] = \st | \xx$, we pretend $\st$ is $\bw$ and perform bulk transition~\ref{it:B2}.

\item \label{it:L3}
If $\omega[1] = \xx | \wb$, then 
we make a transition to $\omega'$, where the only change is that $\omega'[1]$ becomes $\st | \xx$. 

\end{enumerate}

\item $i = n-1$, called a {\em right border transition}:

\begin{enumerate}[label=(R\arabic*)]
\item \label{it:R1}
If $\omega[n-1] = \bw | \st$, we ignore the last site and 
obtain $\omega'$ by removing the rightmost $\bw$ and forming a $\wb$ on the right-hand side of wall $j_1$. See Figure \ref{fig:r1_example} for an illustration.

\begin{figure}[htbp!]
\[
\scalebox{1.95}{${ \atop \xx}{\scalebox{0.5}{$j_1$} \atop |}{ \atop \wq}{ \atop {\dots \atop \dots}}{ \atop \wq}{ \atop \bw}{\scalebox{0.35}{$i = n-1$} \atop \textbf{\textbar}}{ \atop \st}
{ \atop \rightarrow}
{ \atop \xx}{\scalebox{0.5}{$j_1$} \atop |}{ \atop \wb}{ \atop \wq}{ \atop {\dots \atop \dots}}{ \atop \wq}{\scalebox{0.35}{$i = n-1$} \atop |}{ \atop \st}$ }
\]
\caption{A transition of type~\ref{it:R1} at the right border.}
\label{fig:r1_example}
\end{figure}

\item \label{it:R2}
If $\omega[n-1] = \xx | \st$, we pretend $\st$ is $\wb$ and perform bulk transition~\ref{it:B1}.

\item \label{it:R3} 
If $\omega[n-1] = \bw | \xx$, then 
we make a transition to $\omega'$, where the only change is that $\omega'[n-1] = \xx | \st$.

\end{enumerate}

\end{itemize}

In all other cases, $T_*(\omega,i) = \omega$.
Let $\alpha, \alpha_*, \beta, \beta_* \in [0,1].$ 
The two-row $D^*$ process on $\widehat{\Omega}^*_{n, n_0} $ is then given by first picking a wall $i$ uniformly at random among $[n-1]$ 
and then making the transition to $T_*(\omega,i)$ with probability $\lambda(\omega[i])$ given by the following table:
\begin{equation}
\label{tworow-rates}
\begin{array}{|c|c|c|c|c|c|c|c|c|}
\hline
\text{Transition} &  \ref{it:B1} & \ref{it:B2} & \ref{it:L1} & \ref{it:L2} & \ref{it:L3} & 
\ref{it:R1} & \ref{it:R2} & \ref{it:R3}\\
\hline
\lambda(\omega[i]) & 1 & 1 & \alpha & \alpha_* &  1 & \beta & \beta_* & 1 \\
\hline
\end{array}
\end{equation}

\begin{prop}
\label{prop:tworow-irred}
If $\alpha, \alpha_*, \beta, \beta_* \in (0,1]$, the two-row $D^*$ process described above is irreducible and aperiodic.
\end{prop}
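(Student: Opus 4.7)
Plan. The proof splits cleanly into aperiodicity, which is essentially free once irreducibility is established, and irreducibility, which is where the real work lies.

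For aperiodicity it suffices to exhibit a single configuration $\omega \in \widehat{\Omega}^*_{n, n_0}$ with $\mathbb{P}(\omega \to \omega) > 0$, since combined with irreducibility this forces the whole chain to be aperiodic. I would use the fact that any configuration containing two adjacent bulk columns of the form $\bw\bw$ has a wall at which neither rule B1 nor rule B2 matches: B1 demands a $\wb$ to the right of the wall and B2 demands a $\ww$ or $\xx$ to the right, whereas the right-hand column here is $\bw$. This ``stuck'' wall contributes at least $1/(n-1)$ to the self-loop probability. For $\widehat{\Omega}^*_{n, n_0}$ in any nontrivial range of $(n, n_0)$ such a configuration exists (e.g.\ $\st\bw\bw\st$ for $n=4$, $n_0=0$); for the small remaining parameter values one can pick states with stuck walls like $\st|\bw$ or $\wb|\st$ at the border and check them by hand.

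For irreducibility I would fix a canonical ``sorted'' configuration $\omega_0 \in \widehat{\Omega}^*_{n, n_0}$ in which, inside each maximal block delimited by $0$-columns, the $\wb$-columns occur leftmost, the $\bw$-columns occur rightmost, and the $0$-columns sit at prescribed positions consistent with the balance and positivity conditions. The strategy is then (a) to show every $\omega$ reaches $\omega_0$ and (b) to show $\omega_0$ reaches every $\omega$. For (a) I would introduce a potential $\Phi(\omega)$ given by a weighted sum of column positions that rewards $\bw$-columns on the right, $\wb$-columns on the left, and $0$-columns at their prescribed positions. The bulk rules handle themselves: B1 slides a $\wb$ strictly left and B2 slides a $\bw$ or a $\dg$-pair strictly right, and both strictly increase $\Phi$. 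Each boundary rule is then checked case by case to also strictly increase $\Phi$ for a suitable ratio of the weights. Since $\Phi$ is bounded on the finite state space, the process must terminate at the unique maximum, which is $\omega_0$.

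For (b), the decisive observation is that at $\omega_0$ no bulk rule applies (no wall of a sorted state has the left-hand pattern required by B1 or B2), so only the boundary rules L1--L3, R1--R3 can fire. Applied to $\omega_0$, rule L1 removes the leftmost $\wb$ and deposits a $\bw$-column or a $\dg$-pair at some interior wall $j_2$, which genuinely disorders the configuration; R1 acts symmetrically on the right, and L2/R2, L3/R3 adjust the positions of the $0$-columns. Iterating these boundary injections together with the bulk transitions should be enough to realise any prescribed profile in $\widehat{\Omega}^*_{n, n_0}$. The main obstacle lies in exactly these two places: first, tuning $\Phi$ in (a) so that L3 and R3, which convert a border-adjacent mixed column into a $0$-column and thereby change the $\wb/\bw$-counts, still strictly increase $\Phi$; and second, verifying in (b) that the boundary-driven cascades available at $\omega_0$ truly cover the entire state space rather than only a proper subset. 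The first is a fiddly but standard case analysis; the second is the genuine combinatorial content of the proof and would likely proceed by an auxiliary induction on $n$ or on $n_0$, peeling off one boundary column at a time.
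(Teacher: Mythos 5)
Your aperiodicity argument is fine (and is essentially the paper's one-line remark: many local patterns trigger no transition, so self-loops exist), but the irreducibility half has genuine gaps. First, the potential-function plan in (a) is structurally impossible as stated: if $\Phi$ strictly increased under \emph{every} state-changing transition, bulk and boundary alike, the digraph of non-trivial transitions would be acyclic, so $\omega_0$ would be an absorbing sink from which nothing is reachable --- contradicting the very irreducibility you are trying to prove, and directly contradicting your own step (b), where you need the boundary moves at $\omega_0$ to fire and ``disorder'' the configuration, i.e.\ to destroy sortedness. At best $\Phi$ can be monotone under the bulk moves \ref{it:B1}, \ref{it:B2} alone, which proves far less than ``every $\omega$ reaches $\omega_0$.'' Second, the auxiliary claim that no bulk rule applies at your sorted state is false for the $\omega_0$ you describe: with $\wb$-columns leftmost inside a block delimited by $0$-columns, the wall $\xx|\wb$ triggers \ref{it:B1}, and with $\bw$-columns rightmost, the wall $\bw|\xx$ triggers \ref{it:B2}. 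Third, and most importantly, step (b) --- that the boundary-driven cascades starting from $\omega_0$ reach all of $\widehat{\Omega}^*_{n,n_0}$ --- is exactly the combinatorial heart of irreducibility, and you leave it at ``should be enough'' and ``would likely proceed by induction''; nothing is actually proved there.

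For comparison, the paper sidesteps all of this by citation: restricted to columns $2,\dots,n-1$ and the transitions \ref{it:B1}, \ref{it:B2}, \ref{it:L1}, \ref{it:R1}, the chain coincides with the Duchi--Schaeffer two-row process with a fixed number of neutral particles, whose irreducibility is proved in their paper; the only new transitions \ref{it:L2}, \ref{it:L3} (resp.\ \ref{it:R2}, \ref{it:R3}) merely toggle the leftmost (resp.\ rightmost) column between a $0$-column and a $*$-column, so connectivity of the enlarged state space follows at once, and aperiodicity comes from the self-loops. If you want a self-contained argument you must either repair (a) (e.g.\ a potential monotone only under a suitable subset of moves, identifying a small set of mutually reachable ``extremal'' states) and then genuinely carry out the reachability analysis in (b), or else reproduce the Duchi--Schaeffer irreducibility argument and add the $0$/$*$ toggling observation as the paper does.
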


\begin{proof}
Restricting to columns $2, \dots, n-1$ and transitions~\ref{it:B1},~\ref{it:B2}, \ref{it:L1} and~\ref{it:R1}, the process coincides with the Duchi-Schaeffer two-row process with a fixed number of neutral particles; see \cite[Section 4]{duchi-schaeffer-2005}. They show in \cite[Section 6]{duchi-schaeffer-2005} that this process is irreducible. Since~\ref{it:L3} changes a 0-column into a *-column and~\ref{it:L2} vice versa at the left border, and~\ref{it:R3} and~\ref{it:R2} are their counterparts at the right border, the two-row $D^*$ process is irreducible. Since there are several transitions for which $\omega' = \omega$, the process is clearly aperiodic.
\end{proof}

Before we compute the stationary distribution of the two-row $D^*$ process, we state an important property justifying the usefulness of this process. Recall the definition of lumping from Section~\ref{sec:color}.

\begin{prop}
\label{prop:tworow-lump}
The two-row $D^*$ process lumps to the \dstar{}.
\end{prop}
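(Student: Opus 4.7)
The plan is to establish the lumping via the projection $\phi: \widehat{\Omega}^*_{n, n_0} \to \Omega^*_{n, n_0}$ sending a two-row configuration to its top row. This $\phi$ is well-defined: the first and last columns of any $\omega \in \widehat{\Omega}^*_{n, n_0}$ are $\st$ or $\xx$ (projecting to $*$ or $0$), the middle columns have top entries in $\{\neg{1}, 0, 1\}$, and the $0$-column count is preserved. To verify the commutative diagram~\eqref{comm-diag}, I would check, for every wall $i \in [n-1]$ and every $\omega$, three claims: \emph{(i)} the top row of $T_*(\omega, i)$ differs from that of $\omega$ only inside the ``window'' $\{i, i+1\}$ (or $\{1,2\}$, $\{n-1,n\}$ at the boundary); \emph{(ii)} the induced top-row change and the rate $\lambda(\omega[i])$ depend only on the top-row pattern $\phi(\omega)[i]$; and \emph{(iii)} this local dynamics matches the $D^*$-TASEP rule at wall $i$ with the same rate. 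The three together imply~\eqref{comm-diag} after summing over walls.

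The crux is \emph{(i)}. For the bulk rule~\ref{it:B1}, the definition of $j_1$ forces the top row of columns $j_1+1, \dots, i-1$ to be uniformly $\neg{1}$, so the rightward shift of those columns together with the insertion of $\wb$ at $j_1+1$ leaves the top row unchanged outside the window; only positions $i, i+1$ change. An analogous argument with $j_2$ and uniform $1$ handles~\ref{it:B2}. Rules~\ref{it:L1}, \ref{it:L3} (and symmetrically \ref{it:R1}, \ref{it:R3}) alter only the two boundary columns by inspection. For~\ref{it:L2} and~\ref{it:R2}, the ``pretend $\st$ is $\bw$/$\wb$'' prescription amounts to performing~\ref{it:B2}/\ref{it:B1} with the boundary column participating as the moving particle; the result is that the $\st$ is effectively converted to an $\xx$ and the adjacent $\xx$ to a $\bq$, again a change confined to the boundary window. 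Comparing with Table~\ref{tab:dstar-transitions} case by case then verifies that the top-row transition and the rate agree exactly with the corresponding $D^*$-TASEP rule (for example, top $(1, \neg{1}) \to (\neg{1}, 1)$ at rate $1$, top $(*, 0) \to (0, 1)$ at rate $\alpha_*$, and so on).

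The main obstacle is handling two-row neighborhoods that share a top-row pattern, and ensuring that $D^*$-TASEP-inert patterns are also two-row-inert. For the former, e.g., top $(1, \neg{1})$ in the bulk can arise from $\omega[i] \in \{\bb|\wb, \bw|\wb, \bb|\ww, \bw|\ww\}$: the first two trigger~\ref{it:B1} and the last two trigger~\ref{it:B2}, but all four produce the same top-row change at rate $1$, so the projected rate is well-defined. For the latter, the subtle case is $\xx|\ww$ in the bulk, which would have top $(0, \neg{1})$ and hence demand a $D^*$-TASEP transition $0 \neg{1} \to \neg{1} 0$, yet fits no two-row rule. It is ruled out by the balance-and-positivity constraints: letting $B_q$ denote the excess of $1$'s over $\neg{1}$'s in the first $q-1$ columns, balance forces $B_q = 0$ at every $0$-column and positivity requires $B_q \geq 0$ throughout, so $B$ cannot drop by $2$ immediately after a $0$-column. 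Analogous arguments exclude $\bb|\xx$ and the residual boundary impossibilities; together with the case analysis above, summing~\eqref{tworow-rates} over walls reproduces Table~\ref{tab:dstar-transitions} exactly, completing the verification.
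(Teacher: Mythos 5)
Your proposal is correct and follows essentially the same route as the paper's proof: a case-by-case comparison of the top-row effect and the rate of each transition~\ref{it:B1}--\ref{it:R3} with Table~\ref{tab:dstar-transitions} and \eqref{tworow-rates}. You additionally spell out the checks the paper leaves implicit (locality of the top-row change, well-definedness of the projected rates across bottom rows, and the exclusion of patterns such as $\xx|\ww$ and $\bb|\xx$ via the balance and positivity conditions), which is a sound elaboration rather than a different argument.
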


\begin{proof}
By comparing with Table~\ref{tab:dstar-transitions}, one can see that the transitions in the top row of the two-row $D^*$ process are identical to those of the \dstar{}. In particular, transitions~\ref{it:L1}, \ref{it:L2} and \ref{it:L3} match those in the left columns, 
transitions~\ref{it:B1} and \ref{it:B2} match those in the middle columns,
and transitions~\ref{it:R1}, \ref{it:R2} and \ref{it:R3} match those in the right columns. The fact that the probabilities are the same is obtained by comparing  Table~\ref{tab:dstar-transitions} with \eqref{tworow-rates}.
\end{proof}

We now extend the map $T_*$ to $\bar{T_*}: \widehat{\Omega}^*_{n, n_0} \times [n-1] \rightarrow \widehat{\Omega}^*_{n, n_0} \times [n-1]$
where, if $(\omega', j) = \bar{T_*}(\omega, i)$, then $\omega' = T_*(\omega, i)$ and the value of $j$ depends on the transition according to the following rules:
\begin{equation}
\label{tworow-inverse}
\begin{array}{|c|c|c|c|c|c|c|c|c|}
\hline
\text{Transition} &  \ref{it:B1} & \ref{it:B2} & \ref{it:L1} & \ref{it:L2} & \ref{it:L3} & 
\ref{it:R1} & \ref{it:R2} & \ref{it:R3}\\
\hline
\text{Value of } j & j_1 & j_2 & j_2 & j_2 &  1 & j_1 & j_1 & n-1 \\
\hline
\end{array}
\end{equation}
If $T_*(\omega, i) = \omega$, we define $j = i$.

\begin{prop}
\label{prop:tworow-bij}
The map $\bar{T_*}$ is a bijection.
\end{prop}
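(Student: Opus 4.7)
The plan is to prove bijectivity by constructing an explicit inverse $U : \widehat{\Omega}^*_{n,n_0} \times [n-1] \to \widehat{\Omega}^*_{n,n_0} \times [n-1]$ with $U \circ \bar{T_*} = \mathrm{id}$; since the domain is finite, injectivity suffices. The identity moves (configurations and walls $i$ for which $T_*(\omega,i) = \omega$ and hence $j = i$) are trivially self-inverse, so the task reduces to pairing up the non-trivial transitions B1, B2, L1, L2, L3, R1, R2, R3 with reverse moves that undo them.

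The guiding observation is that B1 and B2 are mutually inverse at swapped walls. After a B1 at $(\omega,i)$, the wall returned is $j_1$, column $j_1{+}1$ becomes $\wb$, columns $j_1{+}2,\dots,i$ carry $\wq$ on top and column $i{+}1$ becomes $\bq$. From $(\omega', j_1)$ one can therefore read off $i$ as the first column strictly right of $j_1{+}1$ whose top is not $\neg 1$, and then undo the shift to recover $\omega$. Symmetrically, after a B2 at $(\omega,i)$ the wall $j_2$ sits at the right end of a run that uniquely identifies $i$. Once one verifies that no other transition can produce exactly these local patterns, the bulk preimage is determined.

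For the boundary, L1 and R1 are paired with the appropriate bulk-reverse moves: L1 deposits a $\wb$/$\dg$ at $j_2$ starting from a $\st|\wb$ at wall 1, and B2 applied at $(\omega', j_2)$ inverts it provided one tracks where the $\wb$ or $\dg$ came from. The pure swap transitions L2 and R2 (which temporarily ``pretend'' $\st$ is $\bw$ or $\wb$) are inverted by the B2- or B1-style move back toward the boundary, using that the $*$-column positions $1$ and $n$ are distinguished, so no ambiguity can arise. Finally, L3 and R3 are involutions in the appropriate sense: L3 toggles $\omega[1]$ between $\xx|\wb$ and $\st|\xx$ and returns $j=1$; in $\omega' = T_*(\omega,1)$ with $\omega'[1] = \st|\xx$ we fall under L2, and only the L3 preimage gives the returned wall $j=1$ with the specific border pattern, distinguishing it from L2 (which returns $j_2 \neq 1$). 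The analogous statement holds for R3.

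The main obstacle will be verifying the exhaustiveness and disjointness of the case analysis at the two boundary columns: one must check that the local pattern at wall $j$ in $\omega'$ unambiguously identifies exactly one transition type among B1/B2/L1/L2/R1/R2 when $j \in \{2,\dots,n-2\}$, and among the L3 or R3 cases when $j \in \{1, n-1\}$, while also not colliding with any fixed-point configurations (for which $\omega'[j]$ is not in the list of active patterns). Once the eight transitions are paired with their reverses and the balance and positivity conditions of $\widehat{\Omega}^*_{n,n_0}$ are used to rule out spurious ambiguities, defining $U$ piecewise by these patterns yields the desired inverse.
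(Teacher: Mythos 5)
Your overall plan --- invert $\bar{T_*}$ explicitly by reading off, from the returned wall $j$ and the configuration $\omega'$, where the displaced particles came from --- is the paper's strategy, and your reconstruction rule for (B1) (scan right from the deposited column to the first top entry that is not $\1$) is the correct kernel. However, the claims you build the argument on are false, and the step you defer as ``the main obstacle'' is precisely the content of the proof. No forward transition of the chain undoes another: after a (B1) move the pattern just right of the returned wall $j_1$ is $\wb$ with a $1$-, $0$- or $*$-topped column on its left, so (B2) does not even apply there, and ``B1 and B2 are mutually inverse at swapped walls'' is not true; likewise (B2) applied at $(\omega',j_2)$, when it applies at all, pushes the deposited $\bw$ or \dg{} further to the right rather than undoing (L1), and the inverses of (L2), (R1), (R2) are not ``B1/B2-style'' forward moves either. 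The inverse has to be constructed as a new operation: remove the displaced column (or \dg{}), scan past the run of $\1$-tops (resp.\ $1$-tops) to the first column whose top is $1$, $0$ or $*$, and reinsert there, with a flip $\wb\leftrightarrow\bw$ when the scan ends at a boundary $*$-column (undoing (R1)/(L1)) and a conversion of the boundary $0$-column into a $*$-column (undoing (R2)/(L2)); one must also check the reconstructed configuration still satisfies the balance and positivity conditions. None of this appears in your argument.

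Your disambiguation scheme is also wrong as stated. The local pattern $\omega'[j]$ alone does not identify the transition type: $\xx|\wb$ (or $\st|\wb$, $\bq|\wb$) at the returned wall can be the image of (B1), (R1) or (R2), distinguished only by what the rightward scan meets, and symmetrically for (B2)/(L1)/(L2). Moreover the split ``B1/B2/L1/L2/R1/R2 for $j\in\{2,\dots,n-2\}$, L3/R3 for $j\in\{1,n-1\}$'' fails, since (B1), (R1), (R2) can return $j_1=1$ and (B2), (L1), (L2) can return $j_2=n-1$; at the boundary walls the cases are separated by the pattern ($\st|\xx$ versus a deposited $\wb$ or $\bw$), as you correctly note only for (L3). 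What makes the proof work --- and what the paper does --- is that a single uniform scan-and-reinsert rule simultaneously inverts all transitions that can produce a given pattern, so one never needs to decide which of them occurred. As written, your proposal asserts the outcome of this case analysis without carrying it out, so there is a genuine gap.
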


\begin{proof}

To see why $\bar{T}_*$ is a bijection from $\widehat{\Omega}^*_{n, n_0} \times [n-1]$ to itself, consider the 34 possible local configurations $\omega'[j]$. It is straightforward to check that 24 of these satisfy $\bar{T}_*(\omega', i) = (\omega', i)$. We then have essentially four different cases left for which we find the pre-images $(\omega, i)$.

\begin{itemize}

\item If $\omega'[j] = \st|\xx$ (so $j = 1$), the transition performed must have been~\ref{it:L3}, so $i = 1$ and $\omega[i] = \xx|\wb$. The remaining columns of $\omega$ and $\omega'$ are the same, so whenever $\omega'$ is a valid configuration, the pre-image $(\omega, i)$ has to exist.

\item If $\omega'[j] = \xx|\st$ (so $j = n-1$), the transition performed must have been~\ref{it:R3}, so $i = n-1$ and $\omega[i] = \bw|\xx$. The remaining columns of $\omega$ and $\omega'$ are the same, so whenever $\omega'$ is a valid configuration, the pre-image $(\omega, i)$ must exist.

\item If $\omega'[j] = \st|\wb, \xx|\wb, \bb|\wb$ or $\bw|\wb$, the transition performed must have been~\ref{it:B1},~\ref{it:R1} or~\ref{it:R2}. To find the pre-image $(\omega, i)$, we remove the $\wb$ and move to the right until encountering a 1, * or 0 on the top row in some column $c$. In the first two cases the $\wb$ is inserted to the left of column $c$. In the third, we insert the $\wb$ to the right of column $c$, but if it is the rightmost column, it becomes a *-column. In all cases, $i$ is the wall between the inserted column and $c$. Note that if $\omega'$ is valid, no conditions can become violated in $\omega$.

\item If $\omega'[j] = \bw|\st, \bw|\xx, \bb|\ww$ or $\bw|\ww$, the transition performed must have been~\ref{it:B2},~\ref{it:L1} or~\ref{it:L2}. To find the pre-image $(\omega, i)$, in the first two cases we remove the $\bw$ and in the latter two the $\dg$, and then move to the left until encountering a $\1$, * or 0 in the top row in some column $c$. If a $\wq$ is encountered first, we insert a $\dg$ to get $\wq | \ww = \omega[i]$. In the *-case we place $\wb$ to the right of column $c$. Finally, in the 0-case we insert $\bw$ to the left of column $c$, but if it is the leftmost column, it becomes a *-column. In both cases $i$ is the wall between column $c$ and the inserted column. Again, if $\omega'$ is valid, no conditions can become violated in $\omega$. Note, in particular, that changing $\wq$ to $\bq \ww$ preserves the balance and positivity conditions.

\end{itemize}

This completes the proof.
\end{proof}

A \emph{block} is a part of a two-row configuration of the form $|$\bb$|\omega'|$\ww$|$, where $\omega'$ is called the \emph{inside} of the block. For the purpose of describing the stationary distribution, we introduce the following labelling for $\omega \in \widehat{\Omega}^*_{n, n_0}$: 
\begin{enumerate}

\item[$z$.] Label each $\1$ on the bottom row, to the right of the rightmost 0 (if $n_0 > 0$), and not in a block by a $z$.

\item[$z'$.] Label each $\1$ on the bottom row, to the left of the leftmost 0 (if $n_0 > 0$), and not in a block by a $z'$.

\item[$y$.] Label each 1 on the bottom row, to left of the leftmost 0 (if $n_0 > 0$), not inside a block, and such that there is no $z'$ to the left by a $y$.

\end{enumerate}

An example is provided in Figure \ref{fig:label_example}. Note that for $n_0 = 0$, a $\1$ may be labelled with both $z$ and $z'$ simultaneously. Now, we define $n_{y}(\omega)$ to be the number of $y$-labels, $n_z(\omega)$ the number of $z$-labels,
\[
n_{y_*}(\omega) = 
\begin{cases} 
1, \textrm{if there is a *-column at the left border} \\ 
0, \textrm{otherwise},
\end{cases}
\]
and 
\[
n_{z_*}(\omega) = 
\begin{cases} 
1, \textrm{if there is a *-column at the right border} \\ 
0, \textrm{otherwise}.
\end{cases}
\] 
Let 
\begin{equation}
\label{def-q}
q(\omega) = \frac{1}{\alpha^{n_{y}(\omega)} \alpha_*^{n_{y_*}(\omega)}  \beta^{n_{z}(\omega)} \beta_*^{n_{z_*}(\omega)} }.
\end{equation}
For example, $q(\omega) = \frac{1}{\alpha \beta \beta_*}$ for the configuration in Figure \ref{fig:label_example}.

\begin{figure}[htbp!]
\[
\scalebox{2}{${\bb \atop }{\wb \atop }{\ww \atop }{\wb\atop \scriptscriptstyle y}{\bw \atop \scriptscriptstyle z'}{\wb \atop }{\xx \atop }{\bb \atop }{\ww \atop }{\bw \atop\scriptscriptstyle z}{\st \atop }$}
\]
\caption{A two-row configuration and its labelling.}
\label{fig:label_example}
\end{figure}

We now state the main result of this section. By Proposition~\ref{prop:tworow-irred}, the two-row $D^*$ process has a unique stationary distribution, which we will denote by $\hat{\pi}^*$. 

\begin{theorem}
\label{thm:dist_d}
The stationary distribution of the two-row $D^*$ process is given by
\[
\hat{\pi}^*(\omega) = \frac{q(\omega)}{Z_{n,n_0}^*},
\]
where 
\[
Z_{n,n_0}^* = \sum_{\omega \in \widehat{\Omega}^*_{n, n_0}} q(\omega).
\]
\end{theorem}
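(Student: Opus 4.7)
The plan is to verify that $\hat\pi^*(\omega) = q(\omega)/Z_{n,n_0}^*$ satisfies the balance equation $\sum_\omega \hat\pi^*(\omega)\, P(\omega\to\omega')=\hat\pi^*(\omega')$ by exploiting the bijection $\bar T_*$ of Proposition~\ref{prop:tworow-bij}. Since the chain is irreducible and aperiodic (Proposition~\ref{prop:tworow-irred}), the stationary distribution is unique, so checking balance is enough. After multiplying by $n-1$ and cancelling self-loop contributions on both sides (self-loops are precisely the fixed points of $\bar T_*$), the balance equation becomes
\[
\sum_{(\omega,i):\,T_*(\omega,i)=\omega',\,\omega\ne\omega'} q(\omega)\,\lambda(\omega[i])
\;=\; q(\omega')\sum_{j:\,T_*(\omega',j)\ne\omega'} \lambda(\omega'[j]).
\]
The bijection $\bar T_*$ identifies each non-trivial triple on the left with a unique pair $(\omega',j)$ on the right via $\bar T_*(\omega,i)=(\omega',j)$; hence it suffices to prove the local identity
\[
q(\omega_j)\,\lambda(\omega_j[i_j]) \;=\; q(\omega')\,\lambda(\omega'[j]), \qquad (\omega_j,i_j):=\bar T_*^{-1}(\omega',j),
\]
for every non-fixed $(\omega',j)$.

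I would then verify this identity by case analysis on the eight transition types \ref{it:B1}--\ref{it:R3}. The key observation is that the labels $y$, $z$, $y_*$, $z_*$ depend only on the positions of bottom-row $\pm 1$'s relative to the leftmost/rightmost $0$- or $*$-column and to blocks. For the two bulk transitions \ref{it:B1} and \ref{it:B2}, a $\wb$ (respectively a $\bw$ or $\dg$) is slid across a monochromatic stretch of $\neg 1$'s (respectively $1$'s) on the top row, so neither label counts nor block structure change, and both rates equal $1$; the local identity holds trivially. For the boundary transitions \ref{it:L1} and \ref{it:R1}, sliding the particle past the leftmost or rightmost $0$-column creates or destroys exactly one $y$- or $z$-label, producing the factor $1/\alpha$ or $1/\beta$ in the ratio $q(\omega_j)/q(\omega')$ that exactly compensates the rate $\alpha$ or $\beta$ appearing in \eqref{tworow-rates}. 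For \ref{it:L2}, \ref{it:L3}, \ref{it:R2}, and \ref{it:R3}, a $*$-column at the boundary is swapped with a $0$-column: this toggles $n_{y_*}$ or $n_{z_*}$ by $\pm 1$, producing the $\alpha_*$ or $\beta_*$ factor required by \eqref{tworow-rates}.

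The main obstacle will be the label bookkeeping in \ref{it:L1}, \ref{it:B2}, and \ref{it:R1}, where particles may move across long stretches of the configuration possibly spanning several blocks, and, when $n_0 = 0$, where a single bottom-row $\neg 1$ may simultaneously carry both a $z$- and a $z'$-label. Here one must use the positivity and balance conditions on two-row configurations, together with the extremal nature of $j_1$ and $j_2$, to rule out spurious label creation and to confirm that the block structure is preserved under the transition. Once the eight local identities are established, summing over $\omega' \in \widehat{\Omega}^*_{n,n_0}$ and normalizing by $Z_{n,n_0}^*$ yields the theorem.
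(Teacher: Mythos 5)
Your proposal follows essentially the same route as the paper: uniqueness from Proposition~\ref{prop:tworow-irred}, reduction of the balance equation via the bijection $\bar{T_*}$ of Proposition~\ref{prop:tworow-bij}, and the local identity $q(\omega)\lambda(\omega[i])=q(\omega')\lambda(\omega'[j])$, which is precisely the paper's Lemma~\ref{lem:transfer_d}, proved by the same eight-case analysis. One small inaccuracy: in cases \ref{it:B1} and \ref{it:B2} the rate at the new wall $j$ need not equal $1$ (it can be $\alpha$ or $\beta$ when the moved column lands next to a boundary $*$-column, introducing a new $y$- or $z$-label), so these bulk cases are not entirely trivial, exactly as handled in the paper's proof of Lemma~\ref{lem:transfer_d}.
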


\begin{remark}
When $\alpha_*=0$, the transition~\ref{it:L2} does not occur. Therefore, the two-row $D^*$ process is irreducible only on
configurations which begin with a $*$-column, for which $n_{y_*}(\omega) = 1$. 
The way we interpret the stationary weights $q(\omega)$ from \eqref{def-q} is that we simply ignore the factor proportional to $\alpha_*$. 
Similar remarks apply to the case when $\beta_*=0$. When both $\alpha_*$ and $\beta_*$ are zero, we ignore both these factors in $q(\omega)$.
\end{remark}

The lemma below is key in the proof of Theorem \ref{thm:dist_d}.

\begin{lemma}
\label{lem:transfer_d}
For $(\omega, i) \in \widehat{\Omega}^*_{n, n_0} \times [n-1]$,
$$\lambda(\omega[i])q(\omega) = \lambda(\omega'[j])q(\omega'),$$ where $(\omega', j) = \bar{T_*}(\omega, i)$.
\end{lemma}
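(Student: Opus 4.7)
The plan is to verify the identity by a case analysis over the eight transition types \ref{it:B1}--\ref{it:R3} that can apply at $(\omega,i)$. For each type I will (i) read off $\lambda(\omega[i])$ from~\eqref{tworow-rates}; (ii) compute $(\omega',j)=\bar T_*(\omega,i)$ and identify which of the same eight types is triggered at $\omega'[j]$; (iii) read off $\lambda(\omega'[j])$; and (iv) track how the counts $n_y,n_z,n_{y_*},n_{z_*}$ change, which through the formula~\eqref{def-q} determines $q(\omega')/q(\omega)$. The claim then reduces in each case to a short algebraic check.

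For the bulk transitions \ref{it:B1} and \ref{it:B2} the verification is immediate: both rates are $1$, and the particles that move are transported only within a segment bounded by $0$-columns (or blocks), so none of $n_y,n_z,n_{y_*},n_{z_*}$ can change. Hence $q(\omega)=q(\omega')$ and the identity holds trivially.

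The substantive work concerns the four boundary transitions. The pairings one finds are as follows. Transition \ref{it:L3} at $\omega[1]=\xx|\wb$ (rate $1$) converts the leftmost $0$-column into a $*$-column, so $n_{y_*}$ increases by exactly one; the image configuration $\st|\xx$ at wall $1$ matches the pattern for \ref{it:L2} (rate $\alpha_*$), and the identity becomes $1\cdot q(\omega)=\alpha_*\cdot q(\omega)/\alpha_*$. The pair \ref{it:R3}/\ref{it:R2} is symmetric, with $\beta_*$ and $n_{z_*}$. Transition \ref{it:L1} at $\omega[1]=\st|\wb$ (rate $\alpha$) removes the unique $y$-labeled $1$ (which sits at position $2$, immediately to the right of the $*$-column and to the left of any $0$-column) and redeposits the pair at $j_2$ either as a $\dg$ or as a $\bw$ on the left of $j_2$; in both subcases the new bottom-row $\1$ receives either no label or a $z'$-label, but not a $y$-label, so $n_y$ drops by exactly one. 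The local image $\omega'[j_2]$ is one of $\bw|\st,\bw|\xx,\bb|\ww,\bw|\ww$, each of which triggers \ref{it:B2} (rate $1$), giving $\alpha\cdot q(\omega)=1\cdot\alpha\,q(\omega)$. The case \ref{it:R1} is the left-right mirror, using $\beta$ and $n_z$. Finally, \ref{it:L2} (rate $\alpha_*$) is the forward version of the \ref{it:L3}-reverse: the $*$-column at position $1$ disappears, $n_{y_*}$ drops by one, and the image again matches \ref{it:B2} (rate $1$); \ref{it:R2} is symmetric.

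The main obstacle is the careful bookkeeping of label changes in \ref{it:L1} and \ref{it:R1}. For \ref{it:L1} one must separately examine each of the four possible shapes of $\omega'[j_2]$, and use the positivity and balance conditions of the two-row model together with the defining property of $j_2$ (that the top-row entries strictly between walls $2$ and $j_2$ are all $1$) to rule out any unintended creation or destruction of $y$-labels among the columns in that range, and likewise for $z'$-labels that could inadvertently mask downstream $y$-labels. Once this label-accounting is settled, all eight cases assemble into the stated identity.
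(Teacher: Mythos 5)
Your overall strategy --- a case analysis over the eight transition types with bookkeeping of $n_y,n_z,n_{y_*},n_{z_*}$ and the rates $\lambda$ --- is the same as the paper's, but your per-case accounting has a genuine error. The bulk cases are \emph{not} trivial: in \ref{it:B1} the relocated $\wb$ column can land immediately to the right of a left $*$-column (exactly when $j_1=1$ and the first column is a $*$-column), so the image local configuration is $\omega'[j_1]=\st|\wb$, which is an \ref{it:L1}-pattern with $\lambda(\omega'[j_1])=\alpha$, and simultaneously the bottom $1$ of the moved column acquires a $y$-label, so $q(\omega')=q(\omega)/\alpha$. Both factors change and only the product is preserved. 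The mirror phenomenon occurs in \ref{it:B2}, where the deposited pair can end up as $\bw|\st$ at wall $n-1$, giving rate $\beta$ and a new $z$-label. Hence your claim that in \ref{it:B1}/\ref{it:B2} both rates equal $1$ and none of the label counts changes is false, and the identity in these subcases requires exactly the $\alpha$- and $\beta$-compensations you dismiss.

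The same omission infects your boundary cases. In \ref{it:L1} (and \ref{it:L2}) you list $\bw|\st$ among the possible images at $j_2$ but then assert that every image triggers \ref{it:B2} with rate $1$; in fact $\bw|\st$ at wall $n-1$ is the \ref{it:R1} pattern with rate $\beta$, and in that subcase the inserted bottom $\1$ receives a $z$-label (not merely ``no label or a $z'$-label''), so $q(\omega')=\alpha\, q(\omega)/\beta$ and your claimed identity $\alpha\, q(\omega)=1\cdot \alpha\, q(\omega)$ is wrong as stated --- the correct balance is $\alpha\, q(\omega)=\beta\cdot \alpha\, q(\omega)/\beta$. Likewise in \ref{it:R1} and \ref{it:R2} the reinserted $\wb$ can land next to the left $*$-column, producing the \ref{it:L1}-pattern with rate $\alpha$ and a new $y$-label. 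Your \ref{it:L3}/\ref{it:R3} cases are fine. To repair the argument you must, in each of \ref{it:B1}, \ref{it:B2}, \ref{it:L1}, \ref{it:L2}, \ref{it:R1}, \ref{it:R2}, split according to whether the image wall abuts a $*$-column and verify the compensating label change, which is precisely what the paper's proof does. (A minor further slip: in \ref{it:L1} the $y$-labelled $1$ at position $2$ need not be the unique $y$-label in $\omega$, though it is the only one removed.)
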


\begin{proof}
The proof is case-by-case. We begin with an observation.
In \bq$|$\wb, the bottom 1 does not have a label. If $? = 1$, it is inside a block, and if $? = \1$, there either is a $z'$ or a 0-column to the left, or it is inside a block.

The cases below are labelled according to the definition of the transitions. 
\begin{itemize}

\item Bulk:

\begin{enumerate}

\item[(B1)] Changing the place of the column $\wb$ does not affect the labels of other particles since it does not alter blocks and cannot have the label $z'$. We have two initial cases, $\bq|\wb$ and $\xx|\wb$. The possible local configurations around $j = j_1$ after a transition are \bq$|$\wb~($\lambda(\omega'[j]) = 1$), \xx$|$\wb~$(\lambda(\omega'[j]) = 1)$ and \st$|$\wb~($\lambda(\omega'[j]) = \alpha$). In the first case, the moved column $\wb$ remains unlabelled by observation 1. In the second case, $\wb$ cannot pass the leftmost 0-column and hence remains unlabelled. Finally, in the third case, we introduce a label $y$ either by observation 1 or since the moved column passes the leftmost 0-column. 

\item[(B2)] We have two initial cases, $\bq|\ww$ and $\bw|\xx$. In the latter, moving the $\bw$ has no effect on other particles. In the former, the bottom $\1$ is in a block and thus has no label. Moving the $\dg$ leaves either $\ww$, which is in a block and has no label, or $\wb$ if the initial configuration was $\bb | \ww$. No other particles are affected. The label of the remaining bottom 1 is preserved. The possible local configurations around $j = j_2$ after a transition are \bq$|$\ww~($\lambda(\omega'[j]) = 1$), \bw$|$\xx~$(\lambda(\omega'[j]) = 1)$ and \bw$|$\st~($\lambda(\omega'[j]) = \beta$). In the first case, the bottom $\1$ has no label since it is in a block. The same is true in the second since there is a 0-column to the right. In the third, the $\1$ gets the label $z$. No other particles are affected in these cases. 

\end{enumerate}

\item Left border:

\begin{enumerate}

\item[(L1)] The removal of \wb~removes a label $y$. $\lambda(\omega[i]) = \alpha.$ There are three cases: either there is a $\1$ to the right of $j$ and we insert \dg, there is a 0-column to the right and we insert \bw~to its left, or there is a *-column to the right and we insert \bw~to its left. In the latter two the insertion of \bw~clearly does not have an effect on other labels. In the 0-column case the $\1$ is labelled by $z'$. Hence $\lambda(\omega[i]) q(\omega) = \lambda(\omega'[j])q(\omega')$, since $\lambda(\omega'[j]) = 1$. In the *-case, $\lambda(\omega'[j]) = \beta$ and we introduce the label $z$. Hence $\lambda(\omega[i]) q(\omega) = \lambda(\omega'[j])q(\omega')$. Finally, in the remaining case the $\1$ inserted in \dg~is not labelled since it is in a block. It is straightforward to see that the insertion of \dg~does not change other labels, so $\lambda(\omega[i]) q(\omega) = \lambda(\omega'[j])q(\omega')$.

\item[(L2)] In this case we remove the \st, which causes $n_{y^*}(\omega') = 0$, and have $\lambda(\omega[i]) = \alpha_*$. We have the same three cases as above, but instead of $n_y(\omega) = n_y(\omega') + 1$ and multiplying $q(\omega)$ by $\alpha$ we have $n_{y^*}(\omega) = n_{y^*}(\omega') + 1$ and multiply $q(\omega)$ by $\alpha_*$. Hence the same analysis works here.

\item[(L3)] Before the transition the 1 in the second column $\wb$ does not have a label as it is not left of the leftmost 0. The transition only introduces a $1/\alpha_*$, and $\lambda(\omega'[j]) = \alpha_*$ as $j = i$. Hence $\lambda(\omega[i]) q(\omega) = \lambda(\omega'[j])q(\omega')$, since $\lambda(\omega[i]) = 1$.

\end{enumerate}

\item Right border:

\begin{enumerate}

\item[(R1)] In this case we remove the \bw~to the left of \st. This removes a label $z$, so $n_z(\omega) = n_z(\omega') + 1$. Then, a \wb~is inserted to the right of $j = j_1$. Note that this has no effect on the labels of other particles. There are three cases: \bq, \xx, or \st~to the left of $j$. The second introduces no new label since there is a 0-column to the left. The claim follows from $\lambda(\omega'[j]) = 1$. The third case introduces a label $y$. Then $n_y(\omega') = n_y(\omega) + 1$, and $\lambda(\omega[i]) q(\omega) = \lambda(\omega'[j]) q(\omega')$, since $\lambda(\omega[i]) = \beta, \lambda(\omega'[j]) = \alpha$. In the first $? = 1$ and the 1 in the inserted \wb~is in a block, or $? = \1$ and there is a label $z$ to the left. Hence no new label is introduced in this case, and $\lambda(\omega[i]) q(\omega) = \lambda(\omega'[j]) q(\omega')$, since $\lambda(\omega'[j]) = 1$.

\item[(R2)] This case is similar to the one above (replace $\beta$ by $\beta_*$ and $z$ by $z^*$).

\item[(R3)] Before the transition the $\1$ in the penultimate column $\bw$ has label $z'$. The transition only introduces a $1/\beta_*$, and $\lambda(\omega'[j]) = \beta_*$ as $j = i$. Hence $\lambda(\omega[i]) q(\omega) = \lambda(\omega'[j])q(\omega')$.

\end{enumerate}

\end{itemize}

We have thus shown the claim to be true in each case separately, thereby completing the proof.
\end{proof}

We are now in a position to prove the formula for the stationary distribution.

\begin{proof}[\textbf{Proof} of Theorem \ref{thm:dist_d}.] 
By Proposition~\ref{prop:tworow-irred}, the two-row $D^*$ process has a unique stationary distribution. 
Therefore, it suffices to show that $\hat{\pi}^*$ satisfies the balance equation,
\begin{equation}
\label{balance}
\hat{\pi}^*(\omega) = \sum_{\omega' \in \widehat{\Omega}^*_{n, n_0}} \mathbb{P}( \omega' \to \omega)  \hat{\pi}^*(\omega').
\end{equation}
By definition of the process, $\mathbb{P}( \omega' \to \omega) \neq 0$ if and only if $T_*(\omega',i) = \omega$ for some $i$, and if so, $\mathbb{P}( \omega' \to \omega) = \lambda(\omega'[i])/(n-1)$. For each $i$, there is also the possibility that no transition occurs with probability $1 - \lambda(\omega[i])$ if we are already in state $\omega$.
Therefore, we can rewrite the right hand side of the balance equation \eqref{balance} as
\[
\frac{1}{n-1}\sum_{i=1}^{n-1} 
\sum_{\substack{\omega' \in \widehat{\Omega}^*_{n, n_0} \\ T_*(\omega',i) = \omega}}  \lambda(\omega'[i])  \hat{\pi}^*(\omega')
+ \frac{1}{n-1}\sum_{i=1}^{n-1}  (1 - \lambda(\omega[i])) \hat{\pi}^*(\omega).
\]
But by Proposition~\ref{prop:tworow-bij}, we know that the map $\bar{T}_*$ taking $(\omega',i) \mapsto (\omega,j)$ is a bijection and hence we can rewrite the first sum as
\[
\frac{1}{n-1} \sum_{\substack{ j=1 \\ (\omega',i) = \bar{T_*}^{-1}(\omega, j)}}^{n-1} \lambda(\omega'[i])  \hat{\pi}^*(\omega')
+ \frac{1}{n-1}\sum_{i=1}^{n-1}  (1 - \lambda(\omega[i])) \hat{\pi}^*(\omega).
\]
Now, by Lemma~\ref{lem:transfer_d}, the summand in the first sum is $ \lambda(\omega[j]) \hat{\pi}^*(\omega)$. Therefore, we obtain
\[
\frac{1}{n-1} \sum_{j=1}^{n-1} \lambda(\omega[j]) \hat{\pi}^*(\omega) 
+ \frac{1}{n-1}\sum_{j=1}^{n-1}  (1 - \lambda(\omega[j])) \hat{\pi}^*(\omega),
\]
which is easily seen to sum up to $\hat{\pi}^*(\omega)$, giving the balance equation as desired.
\end{proof}

For $\omega \in \widehat{\Omega}^*_{n, n_0}$, let $\omega_1$ denote the top row of $\omega$. From Propositions~\ref{prop:lump-ss} and~\ref{prop:tworow-lump}, we immediately obtain a combinatorial formula for the stationary probabilities in the \dstar{}.
Recall that the stationary probabilities in the latter are denoted by $\pi^*$.

\begin{cor}
\label{cor:tworow}
The stationary probability of a configuration $\tau \in \Omega^*_{n,n_0}$ in the \dstar{} is given by 
\[
\pi^*(\tau) = \sum_{\substack{\omega \in \widehat{\Omega}^*_{n, n_0} \\ \omega_1 = \tau}} \hat{\pi}^*(\omega).
\]
\end{cor}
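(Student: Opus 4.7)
The plan is to observe that this corollary is an immediate consequence of the three ingredients already assembled just above it, and requires no new calculation. Specifically, Proposition~\ref{prop:tworow-lump} asserts that the top-row map $\omega \mapsto \omega_1$ is a lumping from the two-row $D^*$ process on $\widehat{\Omega}^*_{n, n_0}$ to the \dstar{} on $\Omega^*_{n,n_0}$. Theorem~\ref{thm:dist_d} supplies the stationary distribution $\hat{\pi}^*$ of the two-row process, and Proposition~\ref{prop:lump-ss} is the generic statement that the stationary measure of a lumped chain is obtained by summing the stationary measure of the original chain over fibers of the lumping map.

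First I would make explicit that the equivalence classes in $\widehat{\Omega}^*_{n,n_0}$ determined by the top-row map are exactly the sets $\{\omega \in \widehat{\Omega}^*_{n,n_0} : \omega_1 = \tau\}$ indexed by $\tau \in \Omega^*_{n,n_0}$. Then the key thing to verify (and it is verified inside the proof of Proposition~\ref{prop:tworow-lump}) is that each $\tau$ actually arises as a top row of some two-row configuration, so the lumping map is surjective onto $\Omega^*_{n,n_0}$; otherwise the sum on the right-hand side would be empty for some $\tau$. Once surjectivity and the lumping property are in hand, Proposition~\ref{prop:lump-ss} applied with $\Omega = \widehat{\Omega}^*_{n,n_0}$, $\mathcal{S} = \Omega^*_{n,n_0}$, $\pi_X = \hat{\pi}^*$ and $\pi_Y = \pi^*$ yields exactly the displayed formula.

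There is no genuine obstacle: the corollary is a packaging statement, so the only thing one has to be slightly careful about is uniqueness of the stationary distribution on both sides (which holds by Proposition~\ref{prop:tworow-irred} and the irreducibility of \dstar{} granted by Proposition~\ref{prop:dstar-irred} in the relevant parameter range). If one wanted to be fully rigorous about edge cases where $\alpha_* = 0$ or $\beta_* = 0$, one should note that the lumping restricts to the irreducible subsets identified in Proposition~\ref{prop:dstar-irred}, and the corresponding stationary distributions match because the two-row process is itself restricted to configurations with $*$-columns at the appropriate boundaries. The proof should therefore be just a few lines citing these three results.
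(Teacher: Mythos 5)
Your proposal is correct and matches the paper's argument: the corollary is stated there as an immediate consequence of Propositions~\ref{prop:lump-ss} and~\ref{prop:tworow-lump} (with $\hat{\pi}^*$ supplied by Theorem~\ref{thm:dist_d}), exactly as you describe. The extra remarks on surjectivity and uniqueness are fine but not needed beyond what those propositions already provide.
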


\section{Correlations in MultiTASEPs}
\label{sec:part}
\subsection{Limiting direction for type $\chC$}
\label{sec:cpart}

To understand correlations in the \cmt{}, it will suffice to consider the \ct{}. By Theorem \ref{thm:lump} we can use the \dstar{} with $\alpha_*=\beta_*=0$. Removing the $*$'s at the first and last sites, this is equivalent to 
a model solved in~\cite{arita-2006}, and is known as the {\em semipermeable exclusion process}. We will in this subsection borrow results from there instead of using the two-row $D^*$-process. 
We can obtain our results by setting $\alpha = \beta = 1$. 

The semipermeable exclusion process is ergodic and thus has a unique stationary distribution. Various properties of the process are known due to work of Arita~\cite{arita-2006}. The physics of the model has been studied in~\cite{als-2009}.

First recall that the ballot numbers $\ballot nk$ are given by
\begin{equation}
\label{def-ballot}
\ballot nk = \binom{n+k}n - \binom{n+k}{n+1} = \frac{n-k+1}{n+1} \binom{n+k}n, \quad 0 \leq k \leq n. 
\end{equation}
The ballot numbers $\ballot{n}{k}$ count the number of up-right paths
from $(0,0)$ to $(n,n)$ which stay on or below the diagonal $x=y$ and which touch 
the diagonal $n-k+1$ times (counting both endpoints).
The first few rows of the triangular array of ballot numbers are as follows:
\[
\begin{array}{ccccc}
1 \\
1 & 1 \\
1 & 2 & 2 \\
1 & 3 & 5 & 5 \\
1 & 4 & 9 & 14 & 14 \\
\end{array}
\]
The array satisfies the Pascal triangle-like recurrence 
\begin{equation}
\label{ballot-numbers-recurrence}
\ballot{n}{k} = \ballot{n-1}{k} + \ballot{n}{k-1}, \quad 0 < k < n.
\end{equation}
The last two diagonals $C^n_n = C^n_{n-1}$ give the $n$'th {\em Catalan number}, $\catalan n = \frac{1}{2n+1} \binom{2n}{n}$.

\begin{theorem}[{\cite[Eq.~(27)]{arita-2006}}]
\label{thm:semiperm-pf}
For $\alpha \neq \beta$, the partition function of the semipermeable exclusion process is given by
\[
Z_{n,n_0}(\alpha,\beta) = \sum_{k=0}^{n-n_0} \ballot{n+n_0-1}{n-n_0-k}
\frac{\beta^{-k} - \alpha^{-k}}{\beta^{-1} - \alpha^{-1}}.
\]
For $\alpha = \beta$, the partition function is given by
\[
Z_{n,n_0}(\alpha,\alpha) = \sum_{k=0}^{n-n_0} (k+1) \ballot{n+n_0-1}{n-n_0-k} \alpha^{-k}.
\]
\end{theorem}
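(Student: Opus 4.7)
The approach would be to re-derive Arita's formula using the two-row $D^*$ process developed in Section~\ref{sec:duchi-schaeffer}, which is the framework naturally available at this point in the paper. By Theorem~\ref{thm:lump}(1), the \ct{} on $n$ sites --- equivalent to the semipermeable exclusion process for $\alpha=\beta=1$ --- arises from the \dstar{} with $\alpha_* = \beta_* = 0$. Keeping $\alpha, \beta$ as free parameters and combining Corollary~\ref{cor:tworow} with Theorem~\ref{thm:dist_d}, the partition function becomes
$$Z_{n,n_0}(\alpha,\beta) \;=\; \sum_{\omega} \alpha^{-n_y(\omega)} \beta^{-n_z(\omega)},$$
where the sum runs over two-row configurations on $n+2$ columns with $*$-columns at both borders and $n_0$ interior $0$-columns.

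The first step would be to encode each such two-row configuration as a tuple of lattice paths. The balance and positivity conditions force the interior data to consist of Dyck-like excursions that return to $0$ at each $0$-column and stay non-negative throughout on the bottom row. I expect this to yield a bijection between the ``central'' unlabelled data and a family of paths whose enumeration, indexed by a single statistic $k = n_y(\omega) + n_z(\omega) + 1$, gives the ballot number $\ballot{n+n_0-1}{n-n_0-k}$. The Pascal-style recurrence \eqref{ballot-numbers-recurrence} should emerge directly from natural local moves, for instance inserting an additional $0$-column or an additional $\bb|\ww$ block.

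Second, since $y$-labels lie strictly to the left of the leftmost $0$-column and $z$-labels strictly to the right of the rightmost $0$-column, $n_y$ and $n_z$ factor independently given the central structure. For a fixed value of $k$, summing $\alpha^{-n_y}\beta^{-n_z}$ over all $(n_y,n_z)$ with $n_y+n_z = k-1$ yields the geometric sum
$$\sum_{i+j=k-1} \alpha^{-i}\beta^{-j} \;=\; \frac{\beta^{-k}-\alpha^{-k}}{\beta^{-1}-\alpha^{-1}},$$
producing the first formula after summation over $k$. The $\alpha=\beta$ case would then follow by taking the confluent limit (equivalently, the inner sum collapses, leaving a coefficient linear in $k$) and simplifying the result.

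The main technical obstacle I anticipate is pinning down the exact indexing of the ballot numbers, that is, identifying precisely which path-length and touch-count statistics on the two-row data produce the index $n-n_0-k$ in $\ballot{n+n_0-1}{n-n_0-k}$. This requires carefully tracking how the positivity condition on the bottom row interacts with the block structure of $\bb|\ww$ pairs and $0$-columns, and matching these to the lattice-path characterization given just above \eqref{def-ballot}. An alternative route, if the direct bijection proves awkward, would be to verify a recursion in $n$ and $n_0$ on both sides --- one side via the ballot-number recurrence \eqref{ballot-numbers-recurrence} and the other via a column-removal bijection on two-row configurations.
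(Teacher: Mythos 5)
Your route is genuinely different from the paper's: the paper does not prove this statement at all, but imports it from Arita's work, explicitly saying in Section~\ref{sec:cpart} that it borrows his results ``instead of using the two-row $D^*$-process.'' Re-deriving it from the two-row model is feasible in principle, and the needed ingredients are close to things the paper proves later for types $B$ and $D$: the refined zero-free count of Lemma~\ref{L:Vk}, the segment count of Lemma~\ref{L:endinzero}, a decomposition at the leftmost and rightmost $0$-columns as in Theorem~\ref{T:partitionB}, and the convolution identities of Lemmas~\ref{L:Cat1} and~\ref{L:Cat2}. (Two smaller points you gloss over: Theorem~\ref{thm:lump}(1) is stated only for $\alpha=\beta=1$, so the identification of the marginal of the \dstar{} with the semipermeable process at general $\alpha,\beta$ needs to be spelled out; and Arita's $Z_{n,n_0}$ is defined via matrix-product weights, so equality with $\sum_\omega q(\omega)$, not just proportionality of stationary measures, needs a word.)

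The genuine gap is that the heart of the theorem --- the claim that the number of two-row configurations with prescribed $(n_y,n_z)=(i,j)$ depends only on $i+j$ and equals an explicit ballot number --- is only asserted (``I expect this to yield a bijection\ldots''), not proved; everything else in your outline is bookkeeping around it. Worse, the indexing you guess is off by one, which a small case would have caught. For $n=2$, $n_0=1$ the two-row configurations are $\st\xx\bw\st$, $\st\xx\wb\st$, $\st\bw\xx\st$, $\st\wb\xx\st$ with weights $\beta^{-1},1,1,\alpha^{-1}$, so $Z_{2,1}=2+\alpha^{-1}+\beta^{-1}$; the correct refined count is that configurations with $(n_y,n_z)=(i,j)$ number $\ballot{n+n_0-1}{n-n_0-i-j}$ (indexed by $i+j$, not $i+j+1$), and since $\sum_{i+j=k}\alpha^{-i}\beta^{-j}=\frac{\beta^{-k-1}-\alpha^{-k-1}}{\beta^{-1}-\alpha^{-1}}$, the two-row model yields $Z_{n,n_0}(\alpha,\beta)=\sum_{k}\ballot{n+n_0-1}{n-n_0-k}\,\frac{\beta^{-k-1}-\alpha^{-k-1}}{\beta^{-1}-\alpha^{-1}}$. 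That version is the one consistent with the $\alpha=\beta$ display (its $\beta\to\alpha$ limit is $(k+1)\alpha^{-k}$), with Corollary~\ref{cor:semiperm-pf}, and with the $n_0=0$ (DEHP) case via Lemma~\ref{L:Vk}; the first display as printed is not (its exponents should read $-(k+1)$ rather than $-k$), so your plan to ``produce the first formula'' exactly as printed cannot succeed. In short: fix the indexing, and then actually prove the refined count --- e.g.\ by splitting at the extreme $0$-columns and convolving Lemma~\ref{L:Vk} with Lemma~\ref{L:endinzero} via Lemma~\ref{L:Cat1} --- since that identity is the entire content of the theorem.
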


\begin{cor}
\label{cor:semiperm-pf}
When $\alpha = \beta = 1$, the partition function is
\[
Z_{n,n_0} (1,1) = \ballot{n+n_0+1}{n-n_0}.
\]
\end{cor}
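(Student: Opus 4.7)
The plan is to specialize Theorem~\ref{thm:semiperm-pf} at $\alpha=\beta=1$. Using the symmetric formula there (with $\alpha=1$) directly gives
\[
Z_{n,n_0}(1,1)\;=\;\sum_{k=0}^{n-n_0}(k+1)\,\ballot{n+n_0-1}{n-n_0-k},
\]
so with the abbreviations $M=n+n_0-1$ and $K=n-n_0$ the corollary reduces to the purely combinatorial identity
\[
\sum_{k=0}^{K}(k+1)\,\ballot{M}{K-k}\;=\;\ballot{M+2}{K}.
\]

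I would prove this identity by applying the ballot recurrence~\eqref{ballot-numbers-recurrence} twice as a telescoping device. Extending $\ballot{n}{k}:=0$ for $k<0$ and for $k>n$ (consistent with the explicit formula \eqref{def-ballot}, since then $n-k+1\le 0$), the recurrence $\ballot{M+2}{j}-\ballot{M+2}{j-1}=\ballot{M+1}{j}$ holds for all $j\ge 0$. Summing from $j=1$ to $K$ telescopes to
\[
\ballot{M+2}{K}\;=\;\sum_{j=0}^{K}\ballot{M+1}{j}.
\]
A second application of the same telescoping rewrites each summand as $\ballot{M+1}{j}=\sum_{i=0}^{j}\ballot{M}{i}$. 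Swapping the order of summation then yields
\[
\ballot{M+2}{K}\;=\;\sum_{i=0}^{K}(K-i+1)\,\ballot{M}{i},
\]
and the substitution $k=K-i$ recovers exactly the required sum.

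The inequalities $K\le M+1$ and $K\le M+2$ needed so that no ballot numbers drop out of the natural range amount to $n-n_0\le n+n_0$, which holds for all $n_0\ge 0$, so boundary terms behave as expected. I do not anticipate any real obstacle: the content of the corollary is simply that the operation ``sum the row twice'' transforms $\ballot{M}{\cdot}$ into $\ballot{M+2}{\cdot}$, and after weighting by $k+1$ this is precisely Arita's symmetric-parameter expression. The only delicate point worth flagging is the boundary convention for the ballot numbers, which is handled uniformly by the extension by zero above.
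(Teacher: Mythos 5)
Your proof is correct, but it evaluates the sum by a genuinely different route than the paper. Both arguments start identically, by setting $\alpha=\beta=1$ in the symmetric formula of Theorem~\ref{thm:semiperm-pf}; the paper then expands each ballot number as a difference of binomial coefficients, writes $k+1=\binom{k+1}{1}$, and collapses the resulting sums with the dual Chu--Vandermonde identity to get $\binom{2n+1}{n+n_0+1}-\binom{2n+1}{n+n_0+2}$. You instead stay entirely inside the ballot triangle and prove the equivalent identity $\sum_{k=0}^{K}(k+1)\,\ballot{M}{K-k}=\ballot{M+2}{K}$ by telescoping the recurrence \eqref{ballot-numbers-recurrence} twice (``sum the row twice adds $2$ to the upper index''), which is arguably more elementary and uses only tools the paper has already stated, at the cost of some bookkeeping at the edge of the triangle. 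Two small points of care there: your blanket claim that $\ballot{M+2}{j}-\ballot{M+2}{j-1}=\ballot{M+1}{j}$ holds ``for all $j\ge 0$'' under the extension by zero is literally false one step past the diagonal (at $j=M+3$ the left side is $-\catalan{M+2}\neq 0$), but this is harmless since your sums only use $j\le K\le M+1$; and when $n_0=0$ the second telescoping does hit the diagonal entry $i=M+1$ of row $M+1$, where validity rests on the equality of the last two diagonals $C^m_m=C^m_{m-1}$ (recorded in the paper), so that case is covered as well.
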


\begin{proof}
Plug in $\alpha = 1$ in the second formula for the partition function in Theorem~\ref{thm:semiperm-pf},
\begin{align*}
Z_{n,n_0} (1,1) &= \sum_{k=0}^{n-n_0} (k+1) \ballot{n+n_0-1}{n-n_0-k} \\
&= \sum_{k=0}^{n-n_0} (k+1) \left( \binom{2n-k-1}{n+n_0-1} - 
\binom{2n-k-1}{n+n_0} \right).
\end{align*}
Now, use the fact that $k + 1 = \binom{k+1}{1}$ and the `dual' Chu-Vandermonde identity,
\[
\sum_{m=0}^p \binom{m}{j} \binom{p-m}{\ell-j} = \binom{p+1}{\ell+1},
\quad \text{valid for all $0 \leq j \leq \ell$},
\]
to evaluate both binomial sums. Then
\[
Z_{n,n_0} (1,1) = \binom{2n+1}{n+n_0+1} - \binom{2n+1}{n+n_0+2},
\]
which gives the desired result.
\end{proof}

We want to calculate the probability in the stationary distribution that the last site is occupied by a $1$. Fortunately for us, this has also been computed by Arita.

\begin{theorem}[{\cite[Eq.~(38)]{arita-2006}}]
\label{thm:semiperm-dens}
The probability of the $j$'th site being occupied by a $1$ in the semipermeable exclusion process with $n$ sites and $n_0$ $0$'s is given by
\[
\mathbb{P}^{n,n_0}_{\alpha,\beta}(w_j = 1) =  \sum_{i=0}^{n-j-1} \catalan i \frac{Z_{n-i-1,n_0}(\alpha,\beta)}{Z_{n,n_0}(\alpha,\beta)}
+ \frac{Z_{j-1,n_0}(\alpha,\beta)}{Z_{n,n_0}(\alpha,\beta)} \sum_{k=0}^{n-j} \frac{\ballot{n-j-1}{n-j-k}}{ \beta^{k+1}}.
\]
\end{theorem}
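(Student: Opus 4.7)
The plan is to derive the density formula by summing two-row weights furnished by Theorem~\ref{thm:dist_d} and Corollary~\ref{cor:tworow}. By Theorem~\ref{thm:lump}(1), the semipermeable exclusion process at $\alpha=\beta=1$ coincides with the $D^*$-TASEP on $n+2$ sites with $\ast$-columns pinned at both boundaries, and for general boundary parameters one extends the two-row model analogously. Hence the desired probability $\mathbb{P}^{n,n_0}_{\alpha,\beta}(w_j = 1)$ is expressed as the $\hat{\pi}^*$-mass of two-row configurations whose top row has a $1$ at the site corresponding to $j$, divided by the total partition function.

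I would then partition those configurations according to the type of column at position $j$: either it is $\bb$ (which necessarily opens a new block) or $\bw$ (which is either a free column carrying a label $y$, $z$, or $z'$, or sits inside some enclosing block). For the first term in the formula (the Catalan contribution $C_i$), I would set up a weight-preserving bijection sending a configuration whose column $j$ is part of a block of inner half-length $i$ to a pair consisting of the inner block structure (enumerated by $C_i$ via the Dyck-path encoding inherent in the Duchi--Schaeffer two-row model) and a residual two-row configuration of length $n-i-1$ contributing $Z_{n-i-1,n_0}(\alpha,\beta)$ to the partition-function ratio. Summing over $i$ yields the first term.

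For the second term, I would treat the free-$\bw$ case at position $j$. The $\bw$ produces a label whose weight is absorbed into the left partition function $Z_{j-1,n_0}(\alpha,\beta)$, while additional free $\neg{1}$'s on the bottom row to the right of $j$ contribute powers of $\beta^{-1}$ through their $z$-labels. The ballot numbers $C^{n-j-1}_{n-j-k}$ enumerate the admissible right-side structures (lattice paths of length $2(n-j-1)$ touching the diagonal the appropriate number of times), and the factor $\beta^{-(k+1)}$ records the $k+1$ instances of the $z$-label contributing to $q(\omega)$.

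The main obstacle will be tracking the labels through the decomposition, and in particular verifying (a) that in the block case no $y$, $z$, $z'$, or $\ast$-boundary labels are disturbed when the block is contracted, so that $Z_{n-i-1,n_0}$ appears with the number of $0$-columns unchanged; and (b) that in the free-$\bw$ case the ballot enumeration on the right correctly separates genuinely free $\bw$-columns from those embedded inside smaller blocks, so as not to double-count with the first term. Managing the interaction between block boundaries, free columns, and $0$-columns---where the balance condition forces the running $\pm 1$ counts to reset---is the principal technical hurdle.
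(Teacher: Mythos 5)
You should first be aware that the paper does not prove this statement at all: Theorem~\ref{thm:semiperm-dens} is imported verbatim from Arita's matrix-ansatz computation \cite[Eq.~(38)]{arita-2006}, so what you are proposing is a new, combinatorial derivation via the two-row $D^*$ process rather than a reconstruction of an argument in the paper. That route is not unreasonable in spirit (it is how the paper handles the analogous $B$ and $D$ computations in Sections~\ref{sec:bpart}--\ref{sec:dpart}), but as written your plan has two concrete gaps. The central one is the decomposition itself: the dichotomy ``column $j$ is (in) a block $\leftrightarrow$ first term, column $j$ is a free $\bw$ $\leftrightarrow$ second term'' does not match the two terms of the formula. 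Test the smallest case $n=2$, $n_0=0$, $j=1$, $\alpha=\beta=1$: the two-row configurations with a $1$ on top at column $j$ are $\st\bb\ww\st$, $\st\bw\wb\st$, $\st\bw\bw\st$, each of weight $1$, while the formula gives first term $\catalan{0}\,Z_{1,0}/Z_{2,0}=2/5$ and second term $Z_{0,0}/Z_{2,0}=1/5$. Your block class contains only $\st\bb\ww\st$ (weight $1/5$) and your free-$\bw$ class has weight $2/5$, so the weights are allocated to the wrong terms. Moreover the second term carries the factor $Z_{j-1,n_0}$, i.e.\ it only collects configurations in which \emph{all} $n_0$ zero-columns lie strictly left of site $j$; a free $\bw$ at column $j$ can have $0$-columns to its right (it then carries a $z'$ rather than a $z$ label), so your second class does not even have the right support. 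The correct split has to be the one underlying the matrix-ansatz reduction: it is governed by how far to the right of $j$ one must go before the particle at $j$ is ``absorbed'' (giving $\catalan{i}$ and a glued system of size $n-i-1$) versus its influence reaching the right boundary and picking up the $\beta^{-(k+1)}$ factors, and this is not determined by the column type at $j$ alone. Relatedly, your statement that the label of the $\bw$ at column $j$ ``is absorbed into the left partition function $Z_{j-1}$'' cannot be right as stated, since $Z_{j-1,n_0}$ only involves the first $j-1$ columns.

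The second gap is normalization. The theorem is stated in terms of Arita's partition functions $Z_{m,n_0}(\alpha,\beta)$ at several sizes $m\in\{j-1,\,n-i-1,\,n\}$, whereas your construction produces ratios of two-row partition functions $\sum_\omega q(\omega)$. Since the formula mixes different system sizes, you must also prove that the two-row partition function coincides with Arita's $Z_{m,n_0}(\alpha,\beta)$ (at least up to an $m$-independent constant) for general $\alpha,\beta$ --- essentially a combinatorial reproof of Theorem~\ref{thm:semiperm-pf}, which neither you nor the paper supplies (the paper only uses the specialization $\alpha=\beta=1$ via Corollary~\ref{cor:semiperm-pf}, again quoting Arita). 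Finally, note that the label bookkeeping you defer (the interplay of $y$, $z$, $z'$ labels, blocks, and $0$-columns at general $\alpha,\beta$ with both boundary $*$'s pinned) is precisely where the difficulty sits; until the decomposition is corrected and that factorization across the cut is verified, the proposal does not yet constitute a proof.
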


Setting $j = n$ and $\alpha = \beta = 1$ in Theorem~\ref{thm:semiperm-dens} leads using Corollary~\ref{cor:semiperm-pf} and the  fact that $\ballot{-1}{0} = 1$ to
\begin{equation}
\label{c-prob-last-site}
\mathbb{P}^{n,n_0}_{1,1}(w_n = 1) = \frac{\ballot{n+n_0}{n-1-n_0}}{\ballot{n+n_0+1}{n-n_0}} = \frac{(n-n_0)(n+n_0+2)}{2n(2n+1)}.
\end{equation}
We are now in a position to prove our main result for type $C$.

\begin{theorem}
\label{thm:typec-dens}
The probability that the last site is occupied by $i$ in the \cmt{} of size $n$ is given by
\[
\aver{i}_{\chC} = \frac{2i+1}{2n(2n+1)}.
\]
\end{theorem}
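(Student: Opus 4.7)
The plan is to reduce the multispecies problem to the two-species \ct{} via the coloring lumping and then invoke the already-established marginal formula \eqref{c-prob-last-site}.

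First I would apply Proposition~\ref{prop:kcoloring}. Under the $k$-coloring $f_k$, a particle at the last site of the \cmt{} is mapped to $1$ precisely when its species is at least $k$, to $\1$ when its species is at most $-k$, and to $0$ otherwise. Since lumping preserves stationary distributions (Proposition~\ref{prop:lump-ss}) and the $k$-coloring sends the \cmt{} on $n$ sites to the \ct{} on $n$ sites with $k-1$ zeros, we obtain
\begin{equation*}
\mathbb{P}^{n,k-1}_{1,1}(w_n=1) \;=\; \sum_{j\geq k} \aver{j}_{\chC},
\end{equation*}
where the \ct{} parameters are $\alpha=\beta=1$ as specified by Theorem~\ref{thm:lump}(1) (after ignoring the $*$ columns at sites $1$ and $n$ of the associated \dstar{}).

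Next, telescoping across consecutive values of $k$ yields the clean identity
\begin{equation*}
\aver{i}_{\chC} \;=\; \mathbb{P}^{n,i-1}_{1,1}(w_n=1) - \mathbb{P}^{n,i}_{1,1}(w_n=1),
\end{equation*}
with the convention $\mathbb{P}^{n,n}_{1,1}(w_n=1)=0$ (no particles of species $\geq n+1$ exist). Now I would plug in equation~\eqref{c-prob-last-site}, namely $\mathbb{P}^{n,n_0}_{1,1}(w_n=1) = \frac{(n-n_0)(n+n_0+2)}{2n(2n+1)}$, evaluated at $n_0=i-1$ and $n_0=i$.

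The remaining step is a short algebraic simplification: the numerator becomes $(n-i+1)(n+i+1) - (n-i)(n+i+2) = 2i+1$, which gives the claimed value $\frac{2i+1}{2n(2n+1)}$. The main conceptual obstacle is already resolved by the lumping machinery and Arita's formula; verifying that all three are compatible (the coloring respects the \cmt{} dynamics, the \ct{} corresponds to $\alpha=\beta=1$ in the semipermeable chain, and the $*$-columns do not contribute at site $n$) is routine given the setup in Sections~\ref{sec:color} and~\ref{sec:cpart}. One small sanity check worth including is that $\sum_{i=1}^{n}(2i+1)/(2n(2n+1)) = 1$, confirming that the probabilities sum correctly.
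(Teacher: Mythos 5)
Your argument is correct and is essentially the paper's own proof: lump the \cmt{} to the \ct{} via the $k$-coloring (Propositions~\ref{prop:lump-ss} and~\ref{prop:kcoloring}), express $\aver{i}_{\chC}$ as the difference $\mathbb{P}^{n,i-1}_{1,1}(w_n=1)-\mathbb{P}^{n,i}_{1,1}(w_n=1)$, and evaluate with \eqref{c-prob-last-site}. One small correction: your final sanity check fails, since the last site may also be occupied by a negative species; in fact $\sum_{i=1}^{n}\frac{2i+1}{2n(2n+1)}=\frac{n+2}{2(2n+1)}$, which is exactly $\mathbb{P}^{n,0}_{1,1}(w_n=1)$, not $1$.
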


\begin{proof}
We use the coloring argument from Section~\ref{sec:color}. 
Using the $k$-coloring procedure, we can lump the \cmt{} to the \ct{} with $n$ sites and $k-1$ $0$'s
by Proposition~\ref{prop:kcoloring} for $1 \leq k \leq n$. If the last site is an $i$ in the \cmt{}, 
it will become a $0$ (resp. $1$) if $k > i$ (resp. $k \le i$) in the \ct{}. 
It then follows that the stationary probability in \cmt{} that
the last site is occupied by a particle of species $i$ is the difference of the stationary probabilities in the \ct{} with 
$i-1$ $0$'s and $i$ $0$'s. As a result, we obtain
\begin{align*}
\aver{i}_{\chC} &= \mathbb{P}^{n,i-1}_{1,1}(w_n = 1)
-\mathbb{P}^{n,i}_{1,1}(w_n = 1) \\
&= \frac{(n-i+1)(n+i+1)}{2n(2n+1)} - \frac{(n-i)(n+i+2)}{2n(2n+1)},
\end{align*}
using \eqref{c-prob-last-site}, which proves the result.
\end{proof}

The result for the limiting direction of Lam's random walk in Theorem~\ref{thm:lim-dir-c} now follows from Theorem~\ref{thm:typec-dens}.

\subsection{Partition function and correlations in \bt{}}
\label{sec:bpart}
In this section we use the combinatorial description of the stationary distribution of the two-row $D^*$ process explained in Section~\ref{sec:duchi-schaeffer} to compute the partition function, and the probabilities of the entries of the last two positions in a state.

Recall the ballot numbers  $C_n^m$ from \eqref{def-ballot}.
They also enumerate Dyck paths from the origin to $(2m+2,0)$ with $m-n$ intermediate returns to the $x$-axis. 
Recall also that $C_n = C^n_n = C^n_{n-1}$ is the $n$'th Catalan number.

A Motzkin path of length $k$ is a path from $(0,0)$ to $(k,0)$ that does not go below the $x$-axis and uses steps $(1,1),(1,-1)$ and $(1,0)$. There is a translation from the two-row model to bicolored Motzkin paths 
(in which the horizontal steps can have two different colors). For non-zero and non-star columns we translate to steps in a Motzkin path as follows:
\[
\begin{tikzpicture}[scale=0.5]
\begin{scope}
\draw(0,1)node{$1$}(0,0)node{$1$};
\draw[->](1,0)--(2,1);
\draw(5,1)node{$\1$}(5,0)node{$\1$};
\draw[->](6,1)--(7,0);
\draw(10,1)node{$\1$}(10,0)node{$1$};
\draw[->, dashed, red](11,0.5)--(12,0.5);
\draw(15,1)node{$1$}(15,0)node{$\1$};
\draw[->, thick, blue](16,0.5)--(17,0.5);
\end{scope}
\end{tikzpicture}
\]
This map is also used in \cite[Section 7]{duchi-schaeffer-2005}.
A configuration in the two-row $D^*$ process without zeros corresponds directly to a bicolored Motzkin path (ignoring the star columns in the ends). In terms of Motzkin paths, the weights can be described as follows.
The weight is $1/\beta$ on all horizontal steps of the second color that are on the $x$-axis. 
The weight $1/\al$ is assigned to all up-steps from the $x$-axis and horizontal steps of the first color on the $x$-axis, with the extra condition that no $1/\al$-weights 
are counted if they are to the right of a $1/\beta$-weight. The weight of a path is the product of its step weights.

There is a well-known bijection from bicolored Motzkin paths of length $k$ to Dyck paths of length $2k+2$, where the Dyck path starts with an up and ends with a down-step, the up and down-steps of the Motzkin path are doubled, and the horizontal 
steps are mapped to up-down and down-up, respectively. Here, the color which has label $1/\alpha$ corresponds to an up-down step. 
Using this bijection, Dyck paths will have the following weights: every down-step to the $x$-axis will be weighted $1/\beta$, except for the last one, and every up-step from the horizontal line $x=1$ with no preceding $1/\beta$ will be weighted $1/\alpha$. 
The number of bicolored Motzkin paths of length $k$ with $i$ steps of weight $1/\beta$ is thus $C^k_{k-i}$, using the interpretation above.

Let $V_k(\al, \beta)$ be the generating function for bicolored Motzkin paths of length $k$ with weights as above. For example, 
\[
V_2(\al,\beta)=\al^{-2}+\beta^{-2}+\al^{-1}\beta^{-1}+\al^{-1}+\beta^{-1};
\] 
see Figure \ref{F:bijection}.
Let $M_k(\beta) :=V_k(1,\beta)$. 
Then the bijection to Dyck paths of length $2k+2$ gives $V_k(1,1)=\catalan{k+1}$ and 
\[
M_k(\beta)=\sum_{i=0}^k C_{k-i}^k\beta^{-i}.
\]

\begin{figure}
\centering
\begin{tikzpicture}[scale=0.5]
\put(0,0)
{
\draw[->](0,0)--(1,1);
\draw[->](1,1)--(2,0);
\draw[red](0,1)node{$\alpha$};
\draw[->, dashed, red](5,0)--(6,0);
\draw[->, dashed, red](6,0)--(7,0);
\draw[red](5.5,0.5)node{$\alpha$}(6.5,0.5)node{$\alpha$};
\draw[->, dashed, red](10,0)--(11,0);
\draw[->, thick, blue](11,0)--(12,0);
\draw[red](10.5,0.5)node{$\alpha$}[blue](11.5,0.5)node{$\beta$};
\draw[->, thick, blue](14,0)--(15,0);
\draw[->, dashed, red](15,0)--(16,0);
\draw[blue](14.5,0.5)node{$\beta$};
\draw[->, thick, blue](18,0)--(19,0);
\draw[->, thick, blue](19,0)--(20,0);
\draw[blue](18.5,0.5)node{$\beta$}[blue](19.5,0.5)node{$\beta$};
}
\put(0,-30){
\draw[->](0,0)--(0.5,0.5);
\draw[->](0.5,0.5)--(1,1);
\draw[->](1,1)--(1.5,1.5);
\draw[->](1.5,1.5)--(2,1);
\draw[->](2,1)--(2.5,0.5);
\draw[->](2.5,0.5)--(3,0);
\draw[->](5,0)--(5.5,0.5);
\draw[->](5.5,0.5)--(6,1);
\draw[->](6,1)--(6.5,0.5);
\draw[->](6.5,0.5)--(7,1);
\draw[->](7,1)--(7.5,0.5);
\draw[->](7.5,0.5)--(8,0);
\draw[->](10,0)--(10.5,0.5);
\draw[->](10.5,0.5)--(11,1);
\draw[->](11,1)--(11.5,0.5);
\draw[->](11.5,0.5)--(12,0);
\draw[->](12,0)--(12.5,0.5);
\draw[->](12.5,0.5)--(13,0);
\draw[->](14,0)--(14.5,0.5);
\draw[->](14.5,0.5)--(15,0);
\draw[->](15,0)--(15.5,0.5);
\draw[->](15.5,0.5)--(16,1);
\draw[->](16,1)--(16.5,0.5);
\draw[->](16.5,0.5)--(17,0);
\draw[->](18,0)--(18.5,0.5);
\draw[->](18.5,0.5)--(19,0);
\draw[->](19,0)--(19.5,0.5);
\draw[->](19.5,0.5)--(20,0);
\draw[->](20,0)--(20.5,0.5);
\draw[->](20.5,0.5)--(21,0);
};
\end{tikzpicture}
\vskip1cm
\caption{The five bicolored Motzkin paths of length $k=2$ and the corresponding Dyck paths of length 6.}
\label{F:bijection}
\end{figure}

We will need a sequence of identities which we formulate as lemmas. 
They are probably not new, but we have not found good references.

\begin{lemma} 
\label{L:Cat1} 
For $a\ge b\ge j\ge 0$, we have 
\[
\sum_{i=j}^b C_{b-i}^{a-i}\cdot C_{i-j}^i=C_{b-j}^{a+1}.
\]
\end{lemma}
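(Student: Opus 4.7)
The plan is to reduce the identity to coefficient extraction in powers of the Catalan generating function $C(x) = \sum_{n\ge 0} C_n x^n$, using the well-known ``Catalan triangle'' generating function identity
\[
C(x)^{c+1} \;=\; \sum_{n\ge 0} C^{n+c}_{n}\, x^n \qquad (c\ge 0).
\]

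First I would reindex the sum by $p = b-i$ (so $p$ runs from $0$ to $b-j$) and rewrite the LHS in the more symmetric form
\[
\sum_{p=0}^{b-j} C^{(a-b)+p}_{p}\, C^{(b-p-j)+j}_{b-p-j}.
\]
Each factor is now a ballot number of the form $C^{n+c}_n$ appearing in the identity above: $C^{(a-b)+p}_p$ is $[x^p]\,C(x)^{a-b+1}$ (legitimate since $a\ge b$ forces $a-b+1\ge 1$), and $C^{(b-p-j)+j}_{b-p-j}$ is $[x^{b-p-j}]\,C(x)^{j+1}$ (legitimate since $j\ge 0$).

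Next, the sum is visibly a convolution, so it equals the coefficient of $x^{b-j}$ in the product
\[
C(x)^{a-b+1}\cdot C(x)^{j+1} \;=\; C(x)^{a-b+j+2}.
\]
Applying the Catalan-triangle identity a second time with $c=a-b+j+1\ge 0$ and $n=b-j$ yields
\[
[x^{b-j}]\,C(x)^{a-b+j+2} \;=\; C^{(b-j)+(a-b+j+1)}_{b-j} \;=\; C^{a+1}_{b-j},
\]
which is the right-hand side.

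The one real input is the Catalan-triangle generating function identity. This is classical and the only place where work is needed; it can be proved by induction on $c$ from the functional equation $C(x)=1+xC(x)^2$, combined with the convolution form of the ballot recurrence \eqref{ballot-numbers-recurrence}, or alternatively by the combinatorial interpretation of $C^{n+c}_n$ as the number of $(c{+}1)$-tuples of Dyck paths with total length $2n$. Once it is in hand, the identity above reduces to three lines of coefficient juggling, and no case analysis on $j$ or $b$ is required.
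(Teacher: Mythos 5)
Your proof is correct, but it takes a genuinely different route from the paper's. The paper argues bijectively: it reads $C^{a+1}_{b-j}$ as the number of non-negative NE-SE paths from $(0,0)$ to $(a+1+b-j,\,a+1-b+j)$ and splits each such path at the last time it sits at height $a-b$; the two pieces are counted by $C^{a-i}_{b-i}$ and $C^{i}_{i-j}$, so the identity falls out of a single decomposition with no input beyond the lattice-path meaning of the ballot numbers set up around \eqref{def-ballot}. You instead recognize each factor as a coefficient of a power of the Catalan generating function via $[x^n]C(x)^{c+1}=C^{n+c}_n$, note that after the substitution $p=b-i$ the sum is the Cauchy convolution $[x^{b-j}]\bigl(C(x)^{a-b+1}C(x)^{j+1}\bigr)=[x^{b-j}]C(x)^{a-b+j+2}$, and apply the same identity once more; your index bookkeeping and the uses of $a\ge b$, $j\ge 0$ are all in order, and $[x^n]C(x)^{c+1}=\tfrac{c+1}{n+c+1}\binom{2n+c}{n}$ does agree with \eqref{def-ballot}, so the computation is sound. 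What each approach buys: the paper's argument is self-contained and explanatory, while yours is shorter and mechanical once the Catalan-triangle identity is granted, and the same technique disposes of Lemma \ref{L:Cat2} as well (via $\sum_{m\ge 0}\binom{2m+r}{m}x^m=C(x)^r/\sqrt{1-4x}$). The one caveat is that the classical identity you import is roughly equivalent in strength to the family of $j=0$ cases of the lemma, so it must be established independently; the routes you sketch do work --- the recurrence \eqref{ballot-numbers-recurrence} gives $F_{c+1}=F_c+xF_{c+2}$ for the diagonal generating functions $F_c=\sum_n C^{n+c}_n x^n$, and $C=1+xC^2$ together with the base cases $F_0=C$, $F_1=C^2$ closes the induction --- but you should avoid the tempting shortcut $F_{c+1}=C(x)\,F_c$, which is precisely the identity being proved with $j=0$.
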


\begin{proof}
The right-hand side is also the number of non-negative NE-SE paths from $(0,0)$ to $(a+1+b-j,a+1-b+j)$. The left-hand side counts the same, but summing over the last time the path has $y$-coordinate $a-b$, which happens in position $(a+b-2i,a-b)$ for $j \leq i \leq b$. The number of paths to this position is $C_{b-i}^{a-i}$. We then take one NE-step and the number of possible ways to continue from there is $C_{i-j}^i$.
\end{proof}

\begin{lemma} 
\label{L:Cat2} 
For $n\ge b$ and $b-d-a \ge 0$, 
we have 
\[
\sum_{i=0}^{n-b} C_{i}^{i+d} \binom{2n-2i-d-a}{n-b-i}=\binom{2n-a+1}{n-b}.
\]
\end{lemma}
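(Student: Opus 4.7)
The plan is to prove the identity by a generating function computation, treating $a, b, d$ as fixed parameters and comparing the generating functions in $n$ of both sides. The key ingredients are two standard identities involving the Catalan generating function $C(x) = (1-\sqrt{1-4x})/(2x)$, which satisfies $C(x) = 1 + xC(x)^2$:
\begin{enumerate}
\item[(i)] $\sum_{i \geq 0} C_i^{i+d}\, x^i = C(x)^{d+1}$ for every $d \geq 0$, which follows from the classical decomposition of a non-negative NE/SE lattice path ending at height $d$ into $d+1$ Dyck excursions separated by $d$ up-steps to new levels, each excursion contributing a factor of $C(x)$.
\item[(ii)] $\sum_{m \geq 0} \binom{2m+c}{m}\, x^m = C(x)^c / \sqrt{1-4x}$ for every $c \geq 0$, proven by induction on $c$ using Pascal's identity $\binom{2m+c}{m} = \binom{2m+c-1}{m} + \binom{2m+c-1}{m-1}$ together with the functional equation for $C$; the base case $c=0$ is the classical $\sum_m \binom{2m}{m} x^m = 1/\sqrt{1-4x}$.
\end{enumerate}

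Given these, the hypothesis $b \geq a+d$ ensures that both exponents $2b-d-a$ and $2b-a+1$ are nonnegative, so (ii) applies below. For the right-hand side, substituting $m = n-b$ and invoking (ii) with $c = 2b-a+1$ yields
\[
\sum_n \binom{2n-a+1}{n-b}\, x^n = x^b \cdot \frac{C(x)^{2b-a+1}}{\sqrt{1-4x}}.
\]
For the left-hand side, I will interchange the order of summation, substitute $m = n-b-i$ in the inner sum and apply (ii) with $c = 2b-d-a$, and then apply (i) to the remaining sum over $i$:
\[
\sum_n \left( \sum_{i} C_i^{i+d} \binom{2n-2i-d-a}{n-b-i} \right) x^n
= \left( \sum_{i \geq 0} C_i^{i+d}\, x^i \right) \cdot x^b \cdot \frac{C(x)^{2b-d-a}}{\sqrt{1-4x}}
= x^b \cdot \frac{C(x)^{2b-a+1}}{\sqrt{1-4x}}.
\]
Equating coefficients of $x^n$ on the two sides yields the claimed identity (the case $n < b$ is vacuous, as both sides vanish).

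The main technical obstacle will be establishing identity (ii); the inductive argument sketched above is routine but is the real content of the proof and should be checked carefully. Identity (i) is classical and the remaining manipulation is a mechanical computation in formal power series.
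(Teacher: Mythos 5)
Your proposal is correct, but it proves the identity by a genuinely different route than the paper. The paper argues combinatorially: $\binom{2n-a+1}{n-b}$ counts NE-SE lattice paths from $(0,0)$ to $(2n-a+1,2b-a+1)$, and the left-hand side refines this count by the last position at which the path lies below the line $y=2b-d-a+1$, with $\binom{2n-2i-d-a}{n-b-i}$ counting the unrestricted initial segment and $C_i^{i+d}$ the tail constrained to stay weakly above that line. You instead fix $a,b,d$ and compare generating functions in $n$, reducing everything to the two standard facts $\sum_{i\ge 0} C_i^{i+d}x^i=C(x)^{d+1}$ and $\sum_{m\ge 0}\binom{2m+c}{m}x^m=C(x)^c/\sqrt{1-4x}$; the hypothesis $b-d-a\ge 0$ enters exactly where you say it does (to make both exponents $c$ nonnegative, given that $a,d\ge 0$ implicitly), and the interchange of summation and coefficient extraction are harmless since the omitted binomials vanish. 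The paper's approach buys a short, self-contained bijective argument in the same spirit as its proof of Lemma~\ref{L:Cat1}; yours buys a mechanical computation that handles all $n$ simultaneously and fits naturally with the Catalan generating function $C(t)=(1-\sqrt{1-4t})/(2t)$ that reappears in the remark after Theorem~\ref{T:partitionD}. One small repair to your sketch of identity (ii): summing Pascal's rule gives the three-term relation $F_c=F_{c-1}+xF_{c+1}$ for $F_c(x)=\sum_m\binom{2m+c}{m}x^m$, so the upward induction needs both base cases $c=0$ and $c=1$ (or, alternatively, use the first-passage convolution $\binom{2m+c}{m}=\sum_i C_i\binom{2(m-i)+c-1}{m-i}$, which gives $F_c=C(x)F_{c-1}$ directly); with that, $F_{c+1}=(F_c-F_{c-1})/x=C(x)^{c+1}/\sqrt{1-4x}$ follows from $C(x)-1=xC(x)^2$ exactly as you intend, and this identity is indeed classical (it is Eq.~(5.72) of Graham--Knuth--Patashnik).
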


\begin{proof}
The right-hand side counts NE-SE lattice paths from $(0,0)$ to $(2n-a+1,2b-a+1)$. On the left-hand side we are summing over the last position $(2n-2i-d-a, 2b-d-a)$ where the path is below the line $y = 2b-d-a+1$. Note that $\binom{2n-2i-d-a}{n-b-i}$ counts all NE-SE paths from $(0, 0)$ to $(2n-2i-d-a, 2b-d-a)$, and $C_{i}^{i+d}$, after shifting, NE-SE paths from $(2n-2i-d-a+1, 2b-d-a+1)$ to $(2n-a+1,2b-a+1)$ staying weakly above the line $y = 2b-d-a+1$. There is a northeast-step between the two parts.
\end{proof}

The next result appears as \cite[Equation (A11)]{dehp}, but we restate it because we will use the proof strategy later.

\begin{lemma}
\label{L:Vk} 
For $k\ge 0$, we have 
\[
V_k(\al,\beta)=\sum_{i=0}^k\sum_{j=0}^{k-i} C_{k-i-j}^{k-1}\al^{-i}\beta^{-j}.
\]
\end{lemma}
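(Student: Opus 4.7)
My plan is to pass to the Dyck path side via the bijection given in the excerpt and then apply Lemma~\ref{L:Cat1} after a short direct enumeration.

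The first step is to translate the weight $\alpha^{-i}\beta^{-j}$ under the bijection between bicolored Motzkin paths of length $k$ and Dyck paths of length $2k+2$. A $1/\beta$-weighted step on the Motzkin side (a blue horizontal at the $x$-axis) becomes a $1/\beta$-weighted down-step of the Dyck path that lands on the $x$-axis and is not the terminal step; a $1/\alpha$-weighted step becomes an up-step of the Dyck path starting from the line $y=1$ with no preceding $1/\beta$-weight. If the Dyck path decomposes into primitive arches $A_1, A_2, \ldots, A_R$, this gives exactly $R-1$ weights of $1/\beta$ (so $R = j+1$), and all the $1/\alpha$ weights lie inside $A_1$, where they count the primitive sub-arches of the interior of $A_1$ viewed as a Dyck path at level $1$. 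Thus the coefficient of $\alpha^{-i}\beta^{-j}$ in $V_k(\alpha,\beta)$ equals the number of Dyck paths of semi-length $k+1$ with exactly $j+1$ primitive arches whose first primitive arch has exactly $i$ sub-arches at level $1$.

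Next I would enumerate such Dyck paths by conditioning on the length $2a$ of the first arch ($a \ge 1$). For $a \ge 2$, the interior of $A_1$ is a Dyck path of semi-length $a-1$ with $i$ primitive arches, enumerated by $C^{a-2}_{a-1-i}$ using the excerpt's interpretation of the ballot numbers; the trivial case $a=1, i=0$ contributes $1$. The concatenation $A_2 \cdots A_{j+1}$ has semi-length $k+1-a$ with $j$ primitive arches, enumerated by $C^{k-a}_{k+1-a-j}$. For $i, j \ge 1$, reindexing $a = i+1+b$ gives the total count
\[
\sum_{b=0}^{k-i-j} C^{b+i-1}_{b}\, C^{k-i-b-1}_{k-i-j-b}.
\]

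Finally, I would apply Lemma~\ref{L:Cat1} with parameters $a_{\mathrm{lem}} = k-2$, $b_{\mathrm{lem}} = k-i-1$, $j_{\mathrm{lem}} = j-1$; after reversing the order of summation via $b \mapsto k-i-j-b$ the sum exactly matches the lemma and evaluates to $C^{k-1}_{k-i-j}$, as required. The edge cases $i=0$ and $j=0$ collapse to a single term in the enumeration and give $C^{k-1}_{k-j}$ and $C^{k-1}_{k-i}$ directly, with the doubly degenerate case $i=j=0, k \ge 1$ returning $0$, consistent with the convention $C^{k-1}_k = 0$. The main obstacle is the careful index bookkeeping in the middle step to align the resulting convolution of ballot numbers with the statement of Lemma~\ref{L:Cat1}.
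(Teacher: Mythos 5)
Your proposal is correct and takes essentially the same route as the paper: both proofs pass to the weighted Dyck paths of length $2k+2$, decompose at the first return to the $x$-axis (the first arch), count the two pieces by ballot numbers, and conclude with Lemma~\ref{L:Cat1}. The only difference is presentational — you extract the coefficient of $\alpha^{-i}\beta^{-j}$ directly and sum over the first arch's length, whereas the paper keeps the weighted sum and exchanges the order of summation — and your index bookkeeping, including the degenerate cases $i=0$, $j=0$, checks out.
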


\begin{proof} 
We use the interpretation of Dyck paths of length $2k+2$. A standard recursion for $V_k$ is given by letting $i$ be the first index for which step $2i+2$ is on the $x$-axis and thus labelled $1/\beta$ if $i<k$. Further, let $j$ be the number of intermediate returns to the line $x=1$ by the path before $2i-1$, and $r$ the number of intermediate returns to the $x$-axis by the path after $2i+2$. We get the equation
\begin{align*}
V_{k}(\alpha, \beta) =& \sum_{j=0}^{k-1}\alpha^{-j-1}C_{k-1-j}^{k-1} \\
&+ \sum_{i=0}^{k-1}\beta^{-1}  \sum_{r=0}^{k-i-1}  \beta^{-r}C_{k-i-1-r}^{k-i-1}\sum_{j=0}^{i-1}\alpha^{-j-1}C_{i-1-j}^{i-1} \\
=&  \sum_{r=0}^{k-1}\sum_{j=0}^{k-1}  \beta^{-r-1}\alpha^{-j-1}  \sum_{i=j+1}^{k-r-1}  C_{k-i-1-r}^{k-i-1}C_{i-1-j}^{i-1}\\
&+   \sum_{j=0}^{k-1}(\beta^{-j-1}  +  \alpha^{-j-1})C_{k-1-j}^{k-1}.
\end{align*} 
The statement now follows from Lemma \ref{L:Cat1}. 
\end{proof}

\begin{lemma} 
\label{L:Mk} 
For $k\ge 0$, we have $M_k(\frac{1}{2})=\binom{2k+1}{k}$ and $V_k(\frac{1}{2},\frac{1}{2})=2^{2k}$.
\end{lemma}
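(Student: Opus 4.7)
The plan is to prove both identities by computing ordinary generating functions in $z$ and recognizing the results as known series involving the Catalan generating function $\phi(z) = \sum_{n\ge 0} C_n z^n = (1-\sqrt{1-4z})/(2z)$, which satisfies the key functional identity $1 - 2z\phi(z) = \sqrt{1-4z}$.

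First I would establish the auxiliary identity
\[
\sum_{k\ge r} C^k_{k-r}\, z^k \;=\; z^r \phi(z)^{r+1}, \qquad r\ge 0.
\]
This follows directly from the Dyck-path interpretation of $C^k_{k-r}$ stated just before Theorem~\ref{thm:semiperm-pf}: a Dyck path of length $2k+2$ with $r$ intermediate returns to the $x$-axis factors uniquely as a concatenation of $r+1$ arches, and the generating function for arches (indexed by half the length minus one) is exactly $\phi(z)$.

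For $M_k(1/2)$, inserting this into the defining expansion $M_k(\beta) = \sum_{i\ge 0} C^k_{k-i}\beta^{-i}$ gives
\[
\sum_{k\ge 0} M_k(\beta) z^k \;=\; \sum_{i\ge 0} \beta^{-i} z^i \phi(z)^{i+1} \;=\; \frac{\phi(z)}{1 - z\phi(z)/\beta}.
\]
Specializing $\beta = 1/2$ and using $1 - 2z\phi(z) = \sqrt{1-4z}$, the right-hand side becomes $\phi(z)/\sqrt{1-4z} = \frac{1}{2z}\bigl(1/\sqrt{1-4z} - 1\bigr)$. Expanding $1/\sqrt{1-4z} = \sum_{k\ge 0}\binom{2k}{k}z^k$ and extracting coefficients gives $M_k(1/2) = \tfrac12\binom{2k+2}{k+1} = \binom{2k+1}{k}$, as desired.

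For $V_k(1/2,1/2)$, I would first observe that specializing Lemma~\ref{L:Vk} at $\alpha=\beta$ collapses the double sum: grouping by $s = i+j$ and using $\#\{(i,j):i,j\ge 0,\ i+j=s\}=s+1$ gives $V_k(\alpha,\alpha) = \sum_{s\ge 0}(s+1)C^{k-1}_{k-s}\alpha^{-s}$. The same generating function manipulation as above (with the shift $k\mapsto k-1$) then yields
\[
\sum_{k\ge 0} V_k(\alpha,\alpha)\, z^k \;=\; \frac{1}{(1 - z\phi(z)/\alpha)^2}.
\]
Setting $\alpha = 1/2$ collapses the denominator to $(1-2z\phi(z))^2 = 1-4z$, so the generating function equals $1/(1-4z) = \sum_k 4^k z^k$, giving $V_k(1/2,1/2) = 4^k = 2^{2k}$.

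I expect the only technical obstacle to be bookkeeping at the boundary indices: the identity $\sum_k C^k_{k-r}z^k = z^r\phi^{r+1}$ requires $r\ge 0$, and the Lemma~\ref{L:Vk} expansion degenerates at $k=0$ (where $C^{-1}_\cdot$ is not defined). Both issues are handled by checking the $k=0$ term separately—where $M_0(1/2)=V_0(1/2,1/2)=1$ matches $\binom{1}{0}=4^0$—and by extending ballot numbers to be zero outside $0\le k\le n$ so that the $s=0$ contribution to the $V$-generating function vanishes except through the $V_0$ initial value, which the geometric series $1/(1-z\phi/\alpha)^2$ picks up correctly.
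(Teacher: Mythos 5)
Your proof is correct, but it takes a genuinely different route from the paper. You package everything into ordinary generating functions: the identity $\sum_{k\ge r} C^k_{k-r}z^k = z^r\phi(z)^{r+1}$ (via the arch factorization of Dyck paths) turns $M_k(\beta)$ into the series $\phi/(1-z\phi/\beta)$ and, after collapsing Lemma~\ref{L:Vk} at $\alpha=\beta$ to $\sum_s (s+1)C^{k-1}_{k-s}\alpha^{-s}$, turns $V_k(\alpha,\alpha)$ into $(1-z\phi/\alpha)^{-2}$; both identities then fall out of the single functional relation $1-2z\phi(z)=\sqrt{1-4z}$, and your boundary bookkeeping (the $k=0$ term, ballot numbers vanishing outside range so that $C^{k-1}_k=0$) is handled correctly. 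The paper instead works coefficientwise: for $M_k(1/2)$ it decomposes at the first return to the $x$-axis to get the recursion $M_k(\tfrac12)=C_k+2\sum_i C_{i-1}M_{k-i}(\tfrac12)$, proves the formula by induction using Lemma~\ref{L:Cat2}, and for $V_k(\tfrac12,\tfrac12)$ it reuses the factorization $V_k(\alpha,\beta)=\sum_i \beta^{-1}M_{k-i-1}(\beta)\,\alpha^{-1}M_{i-1}(\alpha)+\alpha^{-1}M_{k-1}(\alpha)$ from the proof of Lemma~\ref{L:Vk} together with the central-binomial convolution $\sum_i\binom{2i}{i}\binom{2k-2i}{k-i}=4^k$. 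Your approach buys uniformity and brevity (no induction, no separate appeal to Lemmas~\ref{L:Cat1}--\ref{L:Cat2} or Vandermonde-type identities), at the cost of introducing the Catalan generating function machinery; the paper's approach stays entirely within the ballot-number identities it has already set up and reuses throughout Section~\ref{sec:part}, which keeps the toolkit consistent with the later partition-function computations.
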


\begin{proof} 
For the first identity, we note that it is clearly true for $k=0$. 
We have 
\[
M_k\left( \frac{1}{2} \right) =\sum_{i=0}^k C_{k-i}^k 2^{i}.
\] 
If we let $2i$ be the first return of the path to the $x$-axis, we get the recursion 
\[
M_k\left( \frac{1}{2} \right) =\catalan k+\sum_{i=1}^k \catalan {i-1}\cdot 2\cdot M_{k-i} \left( \frac{1}{2} \right) .
\] 
By induction this gives 
\[
M_k\left( \frac{1}{2} \right) =\catalan k+2\sum_{i=1}^k \catalan {i-1} \binom{2(k-i)+1}{k-i},
\] 
which by Lemma \ref{L:Cat2} becomes 
\[
M_k\left( \frac{1}{2} \right) =\catalan k+2\binom{2k}{k-1}=\binom{2k+1}{k}.
\]
For the second identity, we use that in the proof of Lemma \ref{L:Vk} we get 
\begin{align*}
V_k(\alpha,\beta) &= \sum_{i=0}^{k-1}  \sum_{r=0}^{k-1-i}  \beta^{-r-1}C_{k-i-1-r}^{k-i-1}\sum_{j=0}^{i-1}\alpha^{-j-1}C_{i-j-1}^{i-1} +   \sum_{j=0}^{k-1}\alpha^{-j-1}C_{k-1-j}^{k-1}\\ 
&= \sum_{i=0}^{k-1}  \beta^{-1}M_{k-i-1}(\beta)\alpha^{-1}M_{i-1}(\alpha)+\alpha^{-1}M_{k-1}(\alpha),
\end{align*}
which by the first identity gives
\begin{align*}
V_k\left( \frac{1}{2},\frac{1}{2} \right) &= \sum_{i=0}^{k-1}2\binom{2k-2i-1}{k-1-i}2\binom{2i-1}{i-1}+2\binom{2k-1}{k-1} \\
&= \sum_{i=0}^{k}\binom{2k-2i}{k-i}\binom{2i}{i}=2^{2k},
\end{align*}
which completes the proof.
\end{proof}

Now we start analysing the two-row model in more detail. First, note that for $k$ consecutive columns with no zeros and $\al=\beta=1$ there are $V_k(1,1)=\catalan {k+1}$ possible two-row configurations~\cite{duchi-schaeffer-2005}. We use this as base case for the following inductive proof.

\begin{lemma} 
\label{L:endinzero} 
For $k$ consecutive columns in the two-row model with $n_0$ zeros and $\al=\beta=1$ the (weighted, each with weight 1) number of possible configurations is $C_{k-n_0}^{k+n_0+1}.$
\end{lemma}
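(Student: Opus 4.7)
The plan is to induct on the number $n_0$ of $0$-columns. The base case $n_0 = 0$ is the identity $V_k(1,1) = C_{k+1}$ recorded in the paragraph preceding the lemma, together with the equality $C^{k+1}_k = C_{k+1}$ that is immediate from the definition of the ballot numbers.

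For the inductive step with $n_0 \geq 1$, I would condition on the position of the leftmost $0$-column, say column $j+1$ with $0 \leq j \leq k - n_0$. The balance condition forces the first $j$ columns to contain equal numbers of $1$'s and $\neg{1}$'s, so these columns form a bicolored Motzkin configuration with no $0$-columns; they are counted by $V_j(1,1) = C_{j+1}$ via the base case. Balance at the first $0$-column also guarantees that the positivity condition inherited by the $k - j - 1$ columns to the right of column $j+1$ matches the positivity condition satisfied when those columns are viewed in isolation, so that block is an independent valid two-row configuration on $k - j - 1$ columns with $n_0 - 1$ $0$-columns, contributing $C^{k - j + n_0 - 1}_{k - j - n_0}$ by the induction hypothesis. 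Summing over $j$ and substituting $i = j + 1$, then using $C_i = C^i_{i-1}$, expresses the desired count as
\[
\sum_{j=0}^{k - n_0} C_{j+1}\, C^{k - j + n_0 - 1}_{k - j - n_0}
\;=\; \sum_{i=1}^{k - n_0 + 1} C^i_{i-1}\, C^{k + n_0 - i}_{k - n_0 + 1 - i}.
\]

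The right-hand sum is exactly the specialization $a = k + n_0$, $b = k - n_0 + 1$, $j = 1$ of Lemma~\ref{L:Cat1} (the condition $a \geq b$ holds because $n_0 \geq 1$), which evaluates to $C^{a+1}_{b-j} = C^{k + n_0 + 1}_{k - n_0}$, completing the induction. The one conceptual point requiring care is the decomposition at the leftmost $0$-column: balance pins the $1$-minus-$\neg{1}$ count in the left block to zero, which both makes the left block a legitimate stand-alone configuration and ensures the positivity condition inherited by the right block agrees with its stand-alone positivity condition. Once the decomposition is verified, everything else is a direct invocation of Lemma~\ref{L:Cat1}.
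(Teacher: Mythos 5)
Your proof is correct and follows essentially the same route as the paper: induction on $n_0$, splitting at a distinguished $0$-column into a zero-free block counted by $V_\bullet(1,1)=\catalan{\bullet+1}$ and a smaller instance handled by the induction hypothesis, and then a single application of Lemma~\ref{L:Cat1}. The only difference is that you condition on the leftmost $0$-column while the paper conditions on the rightmost one, a mirror-image variation of no consequence.
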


\begin{proof} 
We perform induction on $n_0$.
Let $i$ be the position of the last zero column. Then by induction the number of configurations is 
\[
\sum_{i=n_0}^k C_{i-1-(n_0-1)}^{i-1+(n_0-1)+1} \catalan {k-i+1}=\sum_{i=2n_0-1}^{k+n_0-1} C_{i-2n_0+1}^{i} C_{k+n_0-i-1}^{k+n_0-i},
\] 
since $\catalan {n}=C_{n-1}^{n}$. The statement now follows from Lemma \ref{L:Cat1}.
\end{proof}

Now we are in a position to compute the partition function for the \bt{}, which we will denote in this section by $Z_{n,n_0}$.

\begin{theorem} 
\label{T:partitionB} 
For any $n\ge n_0\ge 0$, the partition function for the \bt{} is $Z_{n,n_0}=\binom{2n}{n-n_0}$.
\end{theorem}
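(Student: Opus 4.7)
The plan is to combine the lumping of Theorem~\ref{thm:lump}(2) with the combinatorial stationary distribution of Theorem~\ref{thm:dist_d}. By lumping, $Z_{n,n_0}$ equals the partition function $Z^*_{n+1,n_0}$ of the \dstar{} on $n+1$ sites with $n_0$ zeros and parameters $\alpha_* = 0$, $\alpha = 1$, $\beta = \beta_* = 1/2$; by Theorem~\ref{thm:dist_d}, this latter equals $\sum_\omega q(\omega)$ over $\omega \in \widehat{\Omega}^*_{n+1,n_0}$. Since $\alpha_* = 0$ restricts us to configurations whose first column is $*$, we simply read $q(\omega) = 2^{n_z(\omega) + n_{z_*}(\omega)}$.

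Next, I would split this sum according to whether the last column of $\omega$ is a $0$-column (Case A) or a $*$-column (Case B). The balance and positivity conditions force each substring between consecutive separators (the leading $*$, the zero columns, and possibly the trailing $*$) to be a bicolored Motzkin path. Because $\alpha = 1$ kills the $y$-labels, and because a $\neg{1}$ on the bottom is not inside a block precisely when it corresponds to a $\bw$-step at height zero, every Motzkin segment lying entirely to the left of the rightmost zero contributes the unweighted count $\catalan{\ell+1}$, while the segment between the rightmost zero and the final $*$ (in Case B) contributes the weighted count $M_\ell(1/2) = \binom{2\ell+1}{\ell}$ by Lemma~\ref{L:Mk}.

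In Case A the rightmost zero is the final column, so $n_z = n_{z_*} = 0$; stripping off this last zero and applying Lemma~\ref{L:endinzero} to the remaining $n-1$ columns with $n_0 - 1$ zeros yields a contribution of $C^{n+n_0-1}_{n-n_0}$ (automatically zero when $n_0 = 0$). In Case B the final $*$ gives a global factor $2$ from $n_{z_*}$; summing over the position $p$ of the rightmost zero with Lemma~\ref{L:endinzero} for the prefix and Lemma~\ref{L:Mk} for the trailing segment, the substitution $q = p - n_0 - 1$ puts the remaining sum into the shape of Lemma~\ref{L:Cat2} with $a = 2$, $b = n_0 + 1$, $d = 2n_0 - 1$, collapsing it to $2\binom{2n-1}{n-n_0-1}$; this also handles $n_0 = 0$, where only Case B applies and the answer is $2M_{n-1}(1/2) = \binom{2n}{n}$.

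Finally, the ballot-number expansion $C^{n+n_0-1}_{n-n_0} = \binom{2n-1}{n-n_0} - \binom{2n-1}{n-n_0-1}$ combined with Pascal's rule yields $Z_{n,n_0} = \binom{2n-1}{n-n_0} + \binom{2n-1}{n-n_0-1} = \binom{2n}{n-n_0}$. The main obstacle is the weight bookkeeping in Case B: correctly identifying $z$-labels with $\bw$-steps at height zero in the correct segment, verifying that the $y$- and $z'$-label factors drop out under our choice of parameters, and checking that Lemma~\ref{L:Cat2} still applies here, since its stated hypothesis $b-d-a \geq 0$ fails but the weaker condition $2b-d-a \geq 0$ underlying its combinatorial proof holds ($2b-d-a = 1$).
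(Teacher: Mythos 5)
Your proposal is correct and follows essentially the same route as the paper: decompose by whether the final column is the rightmost $0$-column or a trailing $*$-column (equivalently, sum over the position of the last $0$), count the unweighted prefix with Lemma~\ref{L:endinzero}, the weighted trailing Motzkin segment with Lemma~\ref{L:Mk} (with the factor $2$ from $\beta_*$), collapse the sum via Lemma~\ref{L:Cat2}, and finish with the ballot-number expansion and Pascal's rule. Your closing remark about Lemma~\ref{L:Cat2} is a fair and careful observation (its combinatorial proof indeed only needs $2b-d-a\ge 0$, or one can re-parametrize with $a=2-2n_0$, $b=1$, $d=2n_0-1$ so that $b-d-a=0$), but this does not change the substance: it is the same argument as in the paper.
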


\begin{proof} 
Recall that in the \bt{} we have $\alpha_* = 0$, $\alpha = 1$ and $\beta = \beta_* = \frac{1}{2}$.
If $n_0=0$, then a two-row configuration must end with stars and thus $Z_{n,0}=2M_{n-1}(\frac{1}{2})=2\binom{2n-1}{n-1}=\binom{2n}{n}$, by Lemma \ref{L:Mk}. The number 2 is the weight of the star at the end.
For $n_0>0$, let $i$ be the position of the last 0-column. Since the $\beta$-weights only contribute to the right of the rightmost zero column, the total weight is, using Lemma \ref{L:endinzero}, given by
\begin{align*}
Z_{n,n_0} &= C_{n-1-(n_0-1)}^{n-1+(n_0-1)+1} + \sum_{i=n_0}^{n-1} C_{i-n_0}^{i+n_0-1} M_{n-i-1} \left( \frac{1}{2} \right) \cdot 2 \\
&= C_{n-n_0}^{n+n_0-1} + 2\sum_{i=n_0}^{n-1} C_{i-n_0}^{i+n_0-1} \binom{2n-2i-1}{n-i-1},
\end{align*} 
where the first term corresponds to $i=n$. 
Using Lemma \ref{L:Cat2}, we obtain
\[
Z_{n,n_0}=C_{n-n_0}^{n+n_0-1} + 2 \binom{2n-1}{n-n_0-1}=\binom{2n}{n-n_0},
\]
proving the result.
\end{proof}

We fix $n$ and $n_0$ such that $n\ge n_0\ge 0$.
In this subsection, let ${\aver {i} }_{n,n_0}$ and ${\aver {i,j}}_{n,n_0}$ denote the stationary probability of having an $i$ in the last position and $i,j$ in the last two positions, respectively, in the \bt{} on $n$ sites with $n_0$ $0$'s.

\begin{theorem} 
\label{T:Bprob} 
For any $n\ge n_0\ge 0$ and $\beta=1/2$, we have the following table for 
$Z_{n,n_0}\cdot\aver {i,j}_{n,n_0}$ in the \bt{}.

\begin{center}

\renewcommand{\arraystretch}{2.5} 
\begin{tabular}{| c | c | c | c| }
\hline
$i\backslash j$ & $\1$ & $0$ & $1$ \\ 
\hline
$\1$ & $\displaystyle \binom{2n-2}{n-n_0-2}$ & $\displaystyle C_{n-n_0-1}^{n+n_0-1}$ & $\displaystyle \binom{2n-2}{n-n_0-2}$ \\
\hline
$0$ & $\displaystyle C_{n-n_0-1}^{n+n_0-2}$& $\displaystyle C_{n-n_0}^{n+n_0-3}$ & $\displaystyle C_{n-n_0-1}^{n+n_0-2}$\\
\hline
$1$ & $\displaystyle 2\binom{2n-3}{n-n_0-2}$ & $\displaystyle C_{n-n_0-1}^{n+n_0-2}$ & $\displaystyle 2\binom{2n-3}{n-n_0-2}$ \\
\hline
\end{tabular}

\end{center}
\end{theorem}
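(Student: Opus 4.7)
The plan is to parallel the proof of Theorem \ref{T:partitionB}, using the two-row $D^*$ model of Section \ref{sec:duchi-schaeffer} to enumerate weighted configurations under additional constraints on columns $n-1$ and $n$. Via the lumping in Theorem \ref{thm:lump}(2) and Proposition \ref{prop:tworow-lump}, each $\aver{i,j}_{n,n_0}$ equals the sum of $\hat{\pi}^*(\omega)$ over two-row configurations with top row ending in the appropriate pair, up to a factor of $1/2$ whenever column $n$ is a $\st$-column (which accounts for the two \bt{}-states ending in $1$ or $\neg 1$ lumping to the same \dstar{}-state). Since $Z_{n,n_0}=\binom{2n}{n-n_0}$ by Theorem \ref{T:partitionB}, it suffices to evaluate the unnormalised weight sum for each case.

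I would first record the symmetry $\aver{i,\neg j}=\aver{i,j}$, which follows from checking against Table \ref{tab:bt-transitions} that negating the last coordinate is an involution of the \bt{}-dynamics: this cuts the table from nine entries to six. Next I would translate the constraints on $(i,j)$ into constraints on columns $n-1$ and $n$ of the two-row model. Column $n$ is a $0$-column (when $j=0$) or a $\st$-column (when $j=\neg 1$, contributing the factor $1/2$). Column $n-1$ is a $0$-column when $i=0$; it is forced to $\bw$ when $i=1$, because a $\bb$ there would need a matching $\ww$ past the boundary; and it can be either $\wb$ or $\ww$ when $i=\neg 1$, the $\ww$-subcase closing a block whose matching $\bb$ lies further to the left.

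For each of the six cases I would decompose the sum by the position $k$ of the rightmost ``ordinary'' $0$-column (i.e.\ the last $0$-column strictly before the constrained ending), with a separate $n_0=0$ branch. By Lemma \ref{L:endinzero}, columns $1,\dots,k$ contribute $C^{k+n_0-1}_{k-n_0}$ when column $k$ is forced to be a $0$-column. Columns $k+1,\dots,n$ then form a bicolored Motzkin-type structure without zeros, whose generating function is $M_{n-k-2}(1/2)$ or a refinement thereof depending on the shape of column $n-1$: when it is $\wb$ or $\bw$, removing that last horizontal step leaves an unconstrained Motzkin path; when it is $\ww$, one must further sum over the position of the matching $\bb$, with the block interior being itself a balanced bicolored Motzkin path. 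The $\beta=\beta_*=1/2$ weighting supplies factors of $2$ for each $\bw$ on the $x$-axis to the right of the last $0$-column and for the $\st$ at column $n$. Finally, the closed form $M_k(1/2)=\binom{2k+1}{k}$ from Lemma \ref{L:Mk}, together with the ballot-number identities in Lemmas \ref{L:Cat1} and \ref{L:Cat2}, should collapse every resulting single or double sum to the binomial or ballot expressions listed in the table.

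The main obstacle I anticipate is the block bookkeeping in the two cases $\aver{\neg 1,\neg 1}$ and $\aver{\neg 1,0}$, where the $\ww$-subcase at column $n-1$ produces a convolution between a Motzkin prefix up to the matching $\bb$, a balanced block interior, and the remaining zero-free tail. Collapsing this triple structure to the clean forms $\binom{2n-2}{n-n_0-2}$ and $C^{n+n_0-1}_{n-n_0-1}$ is the heart of the calculation; I expect to carry it out by applying Lemma \ref{L:Cat1} to merge the block interior with one adjacent Motzkin factor and then Lemma \ref{L:Cat2} to eliminate the remaining sum. As a cross-check I would verify that, with the $1/2$ factors absorbed into the $\aver{\cdot,\pm 1}$ entries and with $\aver{\cdot,0}=\aver{0,\cdot}=0$ when $n_0=0$, the nine table entries sum to $\binom{2n}{n-n_0}=Z_{n,n_0}$, reproducing Theorem \ref{T:partitionB}; any error in the block enumeration will surface immediately in this identity.
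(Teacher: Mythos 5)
Your proposal runs on the same machinery as the paper's proof: the lumping of Theorem~\ref{thm:lump}(2) to the \dstar{} and its two-row version, the sign symmetry at the last site (your involution check against Table~\ref{tab:bt-transitions} is a valid substitute for the paper's appeal to the lumping), the observation that a top-row $1$ immediately before a $0$- or $\st$-column forces a $\bw$ column, Lemma~\ref{L:endinzero} for the part up to the last $0$-column, $M_k(1/2)$ for the zero-free tail, and Lemmas~\ref{L:Cat1} and~\ref{L:Cat2} to collapse the sums; this is exactly how the paper obtains the entries $\aver{0,0}$, $\aver{0,\1}$, $\aver{1,0}$ and $\aver{1,\1}$ (and, as you do, it halves the $\st$-ending weight against the $1/\beta_*=2$ factor; note only that with $\alpha_*=0$ the two-row configurations carry a forced $\st$-column on the left, so your ``columns $n-1$ and $n$'' are really the last two of $n+1$ columns, which is harmless since $\alpha=1$). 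The genuine divergence is in the two entries you yourself call the heart of the calculation, $\aver{\1,\1}$ and $\aver{\1,0}$: the paper never performs the $\ww$-block bookkeeping at the penultimate column. Instead it reads off the column sums of the table from the case analysis already done in the proof of Theorem~\ref{T:partitionB} --- $\binom{2n-1}{n-n_0-1}$ for the columns $j=\1$ and $j=1$, and $\ballot{n+n_0-1}{n-n_0}$ for $j=0$ --- and gets $\aver{\1,0}$ and $\aver{\1,\1}$ by subtracting the directly computed entries, the ballot recurrence \eqref{ballot-numbers-recurrence} yielding the stated closed forms. Your direct route (prefix up to the matching $\bb$, balanced block interior carrying no $\beta$-weights, weighted zero-free tail) should also close via Lemmas~\ref{L:Cat1} and~\ref{L:Cat2}, but it is materially heavier and is precisely the step of your argument that remains a sketch rather than a computation; replacing it by the column-sum subtraction finishes the proof at essentially no extra cost, after which your proposed consistency check (the nine entries summing to $\binom{2n}{n-n_0}$) holds automatically rather than serving as a test.
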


\begin{proof} 
From the lumping to the \dstar{} by Theorem~\ref{thm:lump}(2), we know that $1$ and $\1$ have the same probability of being at the last site, and so the corresponding columns are equal. 
By the computation in the proof of Theorem \ref{T:partitionB}, the column sums are $\binom{2n-1}{n-n_0-1}$ and for column $0$ the sum is $C_{n-n_0}^{n+n_0-1}$. 
By Lemma \ref{L:endinzero} we get directly that $Z_{n,n_0}\aver {0,0}_{n,n_0} =C_{n-2-(n_0-2)}^{n-2+(n_0-2)+1}$ and 
$Z_{n,n_0}\aver{0,\1}_{n,n_0} = C_{n-2-(n_0-1)}^{n-2+(n_0-1)+1}$ as desired. 
A word ending in $10$ must have a bottom row ending in $\1 0$ in the two-row $D^*$ configuration, which means that again Lemma \ref{L:endinzero} is applicable and we get $Z_{n,n_0}\aver {1,0}_{n,n_0}=C_{n-2-(n_0-1)}^{n-2+(n_0-1)+1}$. We can then also deduce that 
\[
Z_{n,n_0}\aver {\1,0}_{n,n_0}=Z_{n,n_0}({\aver 0}_{n,n_0} -\aver{1,0}_{n,n_0}-\aver{0,0}_{n,n_0}) =C_{n-n_0-1}^{n+n_0-1}
\]
using the recursion for ballot numbers.
A word ending in $1*$ in the \dstar{} must have a bottom row ending in $\1 *$ in the two-row $D^*$ configuration. We get a factor $1/\beta=2$ from the $\1$ in the bottom row, and the factor $1/\beta_*=2$ cancels by
the fact that both $1$ and $\1$ lump to $*$. Letting $i$ be the position of the last 0, we get, as in the proof of Theorem \ref{T:partitionB},
\begin{align*}
Z_{n,n_0}\aver {1,\1}_{n,n_0} &= \sum_{i=n_0}^{n-2} C_{i-n_0}^{i+n_0-1} M_{n-i-2} \left( \frac{1}{2} \right) \cdot 2 \\
&= 2\sum_{i=n_0}^{n-2} C_{i-n_0}^{i+n_0-1} \binom{2n-2i-3}{n-i-2}=2\binom{2n-3}{n-n_0-2},
\end{align*}
where the last equality follows from Lemma \ref{L:Cat2}. 
Finally,
\begin{align*}
Z_{n,n_0}\aver{\1,\1}_{n,n_0} &= Z_{n,n_0}({\aver \1}_{n,n_0} -\aver{0,\1}_{n,n_0} -\aver{1,\1}_{n,n_0}) \\
&= \binom{2n-1}{n-n_0-1}- \left( C_{n-n_0-1}^{n+n_0-2}+2\binom{2n-3}{n-n_0-2} \right) \\
&= \binom{2n-2}{n-n_0-2},
\end{align*}
completing the proof.
\end{proof}

\subsection{Limiting direction for type $\hatB$}
\label{sec:limB}

In this section, we will prove formulas for the two-point correlations for the \bmt{}
using the two-point correlations in Section \ref{sec:bpart} for the \bt{}.
As before $\aver {i,j}_{B}$ means the stationary probability of having a particle of species $i$ at site $n-1$
and a particle of species $j$ at site $n$ in the \bmt{} on $n$ sites. 
To avoid confusion, we will reuse the notation $\aver{i,j}_{n,n_0}$ as in Theorem~\ref{T:Bprob} to mean the corresponding probability in the \bt{} with $n$ sites and $n_0$ $0$'s. We will continue to use $Z_{n,n_0}$ for the partition function of the latter.

We will use the lumping described in Proposition \ref{prop:kcoloring}. There are only $n$ different such lumpings and this is 
not enough to determine all the $4n^2$ two-point correlations $\aver {i,j}_{B}$; see Table \ref{tab:bvalues-n=4}. 
But it turns out that we 
can get just enough information to prove Theorem \ref{thm:lim-dir-b}. As usual, we identify $-k$ and $\neg k$. There are no zeros in the \bmt{} but for ease of notation we write $\aver {j,0}_{B}=\aver {0,j}_{B}=0$. We define row and column sums, and up and down-hooks as follows. 
\begin{align*}
\Col_i(n) &:=\sum_{j=-n}^n \aver {j,i}_{B},\quad -n\le i\le n\\
\Row_i(n) &:=\sum_{j=-n}^n \aver {i,j}_{B},\quad -n\le i\le n \\
\Hd_i(n) &:=\sum_{j=i+1}^n \aver {i,-j}_{B}+ \aver {j,-i}_{B},\quad 1\le i\le n \\
\Hu_i(n) &:=\sum_{j=i+1}^n \aver {-j,i}_{B}+ \aver {-i,j}_{B},\quad 1\le i\le n.
\end{align*}
See Figure \ref{F:hooks} for an illustration.

\begin{figure}[htbp!]
\begin{subfigure}{0.45\textwidth}
\centering
\arrayrulewidth=1px
\resizebox{1.0\textwidth}{!}{$\begin{array}{c|c|c|c|c|c|c|c|c|c|}
\multicolumn{1}{c}{} & \multicolumn{1}{c}{-n} & \multicolumn{1}{c}{\dots} & \multicolumn{1}{c}{-(i+1)} & \multicolumn{1}{c}{-i} & \multicolumn{1}{c}{\dots} & \multicolumn{1}{c}{-1} & \multicolumn{1}{c}{1} & \multicolumn{1}{c}{\dots} & \multicolumn{1}{c}{n}\\[0.1cm]
\hhline{~|*{9}{-}|}
&&&&&&&&& \\[-0.3cm]
-n &&&&& & & & &  \\[0.1cm] \hhline{~|*{9}{-}|}
\vdots &&&&& & & & &  \\[0.1cm] \hhline{~|*{9}{-}|}
&&&&&&&&& \\[-0.3cm]
-1 & &  & & & & &&&\\[0.1cm] \hhline{~|*{9}{-}|}
&&&&&&&&& \\[-0.3cm]
1 & &  & & & & &&&\\[0.1cm] \hhline{~|*{9}{-}|}
\vdots &&&&& & & & & \\[0.1cm] \hhline{~|*{9}{-}|}
& \cellcolor{gray!50} & \cellcolor{gray!50} & \cellcolor{gray!50} &&&&&& \\[-0.3cm]
i & \cellcolor{gray!50} & \cellcolor{gray!50} & \cellcolor{gray!50} & & & &&&\\[0.1cm] \hhline{~|*{9}{-}|}
&&&& \cellcolor{gray!50}&&&&& \\[-0.3cm]
i+1 & & & & \cellcolor{gray!50} & & &&&\\[0.1cm] \hhline{~|*{9}{-}|}
\vdots & & & & \cellcolor{gray!50} & &&&& \\[0.1cm] \hhline{~|*{9}{-}|}
&&&&\cellcolor{gray!50}&&&&& \\[-0.3cm]
n & & & &  \cellcolor{gray!50} & &&&& \\[0.1cm] \hhline{~|*{9}{-}|}
\end{array}$}
\caption{$\Hd_i(n)$}
\end{subfigure}
\qquad
\begin{subfigure}{0.45\textwidth}
\centering
\arrayrulewidth=1px
\resizebox{1.0\textwidth}{!}{$\begin{array}{c|c|c|c|c|c|c|c|c|c|}
\multicolumn{1}{c}{} & \multicolumn{1}{c}{-n} & \multicolumn{1}{c}{\dots} & \multicolumn{1}{c}{-1} & \multicolumn{1}{c}{1} & \multicolumn{1}{c}{\dots} & \multicolumn{1}{c}{i} & \multicolumn{1}{c}{i+1} & \multicolumn{1}{c}{\dots} & \multicolumn{1}{c}{n}\\[0.1cm]
\hhline{~|*{9}{-}|}
&&&&&&\cellcolor{gray!50}&&& \\[-0.3cm]
-n &&&& & & \cellcolor{gray!50} & & & \\[0.1cm] \hhline{~|*{9}{-}|}
\vdots &&&& & & \cellcolor{gray!50} & & &\\[0.1cm] \hhline{~|*{9}{-}|}
&&&&&&\cellcolor{gray!50}&&& \\[-0.3cm]
-(i+1) &&&& & & \cellcolor{gray!50} & & & \\[0.1cm] \hhline{~|*{9}{-}|}
&&&&&&& \cellcolor{gray!50} & \cellcolor{gray!50}& \cellcolor{gray!50} \\[-0.3cm]
-i &&&& & & & \cellcolor{gray!50} & \cellcolor{gray!50} & \cellcolor{gray!50} \\[0.1cm] \hhline{~|*{9}{-}|}
\vdots & & & & & & &&&\\[0.1cm] \hhline{~|*{9}{-}|}
&&&&&&&&& \\[-0.3cm]
-1 & & &  & & & &&&\\[0.1cm] \hhline{~|*{9}{-}|}
&&&&&&&&& \\[-0.3cm]
1 & & &  & & & &&&\\[0.1cm] \hhline{~|*{9}{-}|}
\vdots & & & & & & &&&\\[0.1cm] \hhline{~|*{9}{-}|}
&&&&&&&&& \\[-0.3cm]
n & & & & & &&&& \\[0.1cm] \hhline{~|*{9}{-}|}
\end{array}$}
\caption{$\Hu_i(n)$}
\end{subfigure}
\caption{The (A) down-hook $\Hd_i(n)$ and (B) up-hook $\Hu_i(n)$ are shaded.}
\label{F:hooks}
\end{figure}

\begin{lemma}
\label{L:Corr-sums-B}
For any $n\ge 1$, we have in the \bmt{} on $n$ sites,
\begin{align*}
\Col_i(n) &= \frac{1}{2n}\quad -n\le i\le n,\ i \neq 0,\\
\Row_i(n) &= \begin{cases}
\ds \frac{1}{2n} & -n\le i\le -2,\\[0.2cm]
\ds \frac{n-1}{2n(2n-1)}& i=-1,\\[0.2cm]
\ds \frac{n^2+2n(2i-1)-3i^2-i+1}{2n(2n-1)(n-1)} & 1\le i\le n,\\
\end{cases}\\
\Hd_i(n) &= \frac{(n-i)(n+3i-1)}{2n(2n-1)(n-1)} \quad 1\le i\le n,\\
\Hu_i(n) &= \frac{n-i}{n(2n-1)} \quad 1\le i\le n.
\end{align*}
\end{lemma}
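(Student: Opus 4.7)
The plan is to reduce all four sums in the \bmt{} to marginals and two-point correlations in the two-species \bt{}, via the $k$-coloring lumping of Proposition~\ref{prop:kcoloring}. Under $f_k$, every species $\pm j$ with $j\ge k$ is mapped to $\pm 1$ in the \bt{}, while each of the $k-1$ species in $\{\pm 1,\dots,\pm(k-1)\}$ present in the \bmt{} configuration becomes a single $0$ in the \bt{}; thus the lumped process is the \bt{} on $n$ sites with exactly $n_0=k-1$ zeros. Consequently $\{w_n\ge i\}$ in the \bmt{} is precisely $\{w_n=1\}$ in the \bt{} with $n_0=i-1$, and any event of the form $\{w_{n-1},w_n\in A\}$ defined by thresholds $\pm i$ corresponds to a single two-point event in the lumped \bt{}. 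Throughout, I write $\aver{i,j}_{n,n_0}$ for two-point correlations in the \bt{} as in Theorem~\ref{T:Bprob}.

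For $\Col_i(n)$ with $i\ge 1$, telescoping over the thresholds $k=i$ and $k=i+1$ gives $\Col_i(n) = \aver{\cdot,1}_{n,i-1}-\aver{\cdot,1}_{n,i}$, where the marginal $\aver{\cdot,1}_{n,n_0}$ is the probability that the last site of the \bt{} carries a $1$. Summing column $1$ of the Theorem~\ref{T:Bprob} table and simplifying via Pascal's rule and the ballot-number closed form yields $Z_{n,n_0}\aver{\cdot,1}_{n,n_0} = \binom{2n-1}{n-n_0-1}$, so $\aver{\cdot,1}_{n,n_0} = (n-n_0)/(2n)$ and the telescoped difference equals $1/(2n)$. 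The case $i\le -1$ is identical with column~$\1$, which has the same marginal by the $1\leftrightarrow\1$ symmetry at the last site recorded in Theorem~\ref{T:Bprob}.

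For the hooks, the same two-threshold coloring shows that
\[
\Hd_i(n) = \aver{1,\1}_{n,i-1} - \aver{1,\1}_{n,i},\qquad \Hu_i(n) = \aver{\1,1}_{n,i-1}-\aver{\1,1}_{n,i},
\]
since the event $\{w_{n-1}=i,\ w_n=-i\}$ is empty. Substituting the values $Z_{n,n_0}\aver{1,\1}_{n,n_0}=2\binom{2n-3}{n-n_0-2}$ and $Z_{n,n_0}\aver{\1,1}_{n,n_0}=\binom{2n-2}{n-n_0-2}$ from the table, converting the binomial ratios to falling factorials, and factoring $(n-i)$ from the numerator of the resulting telescoping difference yields the announced closed forms $\tfrac{(n-i)(n+3i-1)}{2n(2n-1)(n-1)}$ and $\tfrac{n-i}{n(2n-1)}$.

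The row sums are the most delicate step. For $i\ge 1$, $\Row_i(n)=\aver{1,\cdot}_{n,i-1}-\aver{1,\cdot}_{n,i}$; summing row~$1$ of the table, rewriting the ballot number via the closed form $C^{m}_{k}=\frac{m-k+1}{m+1}\binom{m+k}{k}$, and absorbing it into the binomial term through $\binom{2n-3}{n-n_0-1}/\binom{2n-3}{n-n_0-2} = (n+n_0-1)/(n-n_0-1)$ collapses the marginal to
\[
\aver{1,\cdot}_{n,n_0} = \frac{(n^2-n_0^2)(2n-n_0-2)}{2n(2n-1)(n-1)},
\]
and telescoping in $n_0$ expands directly to the claimed numerator $n^2+2n(2i-1)-3i^2-i+1$. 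For $-n\le i\le -2$ the analogous row~$\1$ sum satisfies $\aver{\1,\cdot}_{n,n_0}=(n-n_0)/(2n)$ whenever $n_0\ge 1$, so telescoping gives $1/(2n)$. The hard case is $i=-1$: at $n_0=0$ the row~$\1$ sum formula from the table overcounts, since its $\aver{\1,0}_{n,0}$ entry evaluates to the Catalan number $\catalan{n-1}$ under the ballot-number extension even though no zero column can actually occur in a configuration with no zeros. A direct count of the two surviving entries in row~$\1$ at $n_0=0$ gives $\aver{\1,\cdot}_{n,0} = 2\binom{2n-2}{n-2}/\binom{2n}{n} = (n-1)/(2n-1)$; combined with $\aver{\1,\cdot}_{n,1}=(n-1)/(2n)$, this yields $\Row_{-1}(n)=(n-1)/(2n(2n-1))$, matching the statement.
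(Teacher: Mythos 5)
Your proposal is correct and follows essentially the same route as the paper's proof: lump via the $k$-coloring to the \bt{}, read off the needed marginals and two-point correlations from Theorem~\ref{T:Bprob} and Theorem~\ref{T:partitionB}, and telescope over the thresholds $n_0=i-1$ and $n_0=i$ (including the special treatment of the row-$\1$ marginal at $n_0=0$, which the paper handles the same way via $\aver{\1,\cdot}_{n,0}=\tfrac{n-1}{2n-1}$). Your explicit remarks that $\aver{i,-i}_{B}=0$ is needed for the hook identities and that the $\aver{\1,0}$ table entry must be dropped at $n_0=0$ are correct points of care that the paper leaves implicit.
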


\begin{proof} 
Since ${\aver 1}_{n,k}=\sum_{i > k}{\aver i}_{B}$, we have 
${\aver i}_{B}={\aver 1}_{n,i-1}-{\aver 1}_{n,i}$. By Theorem \ref{T:Bprob} we can compute 
\[
{\aver 1}_{n,n_0}=\frac{\binom{2n-2}{n-n_0-2}+C_{n-n_0-1}^{n+n_0-2}+2\binom{2n-3}{n-n_0-2}}{Z_{n.n_0}}
=\frac{n-n_0}{2n}
\] 
using \eqref {def-ballot} and Theorem \ref{T:partitionB}. This gives  $\Col_i(n)={\aver i}_{B}=\frac{1}{2n}$.
Similarly, we can compute $\Row_i(n)$ by first computing the row sum for the \bt{} from Theorem \ref{T:Bprob}. 
We obtain
\[
{\aver {1,\cdot}}_{n,n_0} = \frac{2\binom{2n-3}{n-n_0-2}+C_{n-n_0-1}^{n+n_0-2}+2\binom{2n-3}{n-n_0-2}}{Z_{n.n_0}}
= \frac{(n^2-n_0^2)(2n-n_0-2)}{2n(2n-1)(n-1)}.
\]
Similar computations give  
${\aver {\1,\cdot}}_{n,n_0}=\frac{n-n_0}{2n}$ for $n_0\ge 1$ and ${\aver {\1,\cdot}}_{n,0}=\frac{n-1}{2n-1}.$
For $i\ge 2$ we get, from the same argument as above, 
\[
\Row_{-i}(n)={\aver {-i,\cdot}}_{B}={\aver {\1,\cdot}}_{n,i-1}-{\aver {\1,\cdot}}_{n,i}=\frac{1}{2n},
\] 
whereas 
\[
\Row_{-1}(n)=\frac{n-1}{2n-1}-\frac{n-1}{2n}=\frac{n-1}{2n(2n-1)}.
\] 
For any $i\ge 1$ we get 
\begin{multline*}
\Row_i(n)={\aver {1,\cdot}}_{n,i-1}-{\aver {1,\cdot}}_{n,i} \\
= \frac{(n^2-(i-1)^2)(2n-(i-1)-2)-(n^2-i^2)(2n-i-2)}{2n(2n-1)(n-1)},
\end{multline*}
which simplifies to the desired formula.
The $(i+1)$-coloring procedure (see Section~\ref{sec:color}) gives that ${\aver {1,\1}}_{n,i}=\sum_{j,k\ge i+1}{\aver{j,-k}}_{B}$, which means $\Hd_i(n) \allowbreak ={\aver {1,\1}}_{n,i-1}-{\aver {1,\1}}_{n,i}$.
By Theorem \ref{T:Bprob} this becomes 
\[
\Hd_i(n)=\frac{2\binom{2n-3}{n-(i-1)-2}}{Z_{n,i-1}}-\frac{2\binom{2n-3}{n-i-2}}{Z_{n,i}}
=\frac{(n-i)(n+3i-1)}{2n(2n-1)(n-1)}
\] 
as claimed.
Similarly 
\[
\Hu_i(n)={\aver {\1,1}}_{n,i-1}-{\aver {\1,1}}_{n,i}
=\frac{\binom{2n-2}{n-(i-1)-2}}{Z_{n,i-1}}-\frac{\binom{2n-2}{n-i-2}}{Z_{n.i}}=\frac{n-i}{n(2n-1)},
\]
completing the proof.
\end{proof}

Lemma~\ref{L:Corr-sums-B} is the key ingredient in the proof of the limiting direction of Lam's walk of type \hatB{} in Theorem~\ref{thm:lim-dir-b}.
From Lemma~\ref{L:Corr-sums-B} and the fact that ${\aver i}_{B}=\Col_i(n)$, we immediately arrive at the following 
curious observation, which might be of independent interest.

\begin{cor}
The probability of an $i$ at the last site in the \bmt{} on $n$ sites  is $1/(2n)$ for all $-n \leq i \leq n, i \neq 0$.
\end{cor}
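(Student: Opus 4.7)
The plan is to observe that this corollary is essentially a restatement of one line of Lemma~\ref{L:Corr-sums-B}. By definition,
\[
\Col_i(n) = \sum_{j=-n}^n \aver{j,i}_B
\]
is the marginal probability of finding a particle of species $i$ at the last site of the \bmt{}, summing over all possible species $j$ at the penultimate site (with the convention $\aver{j,0}_B = 0$, which is consistent here because $i \neq 0$). Hence $\aver i_B = \Col_i(n)$, and the first case of Lemma~\ref{L:Corr-sums-B} immediately gives $\aver i_B = 1/(2n)$.

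For completeness, I would also indicate an independent derivation that does not rely on first establishing the full two-row partition function table. The idea is to apply the $k$-coloring lumping (Proposition~\ref{prop:kcoloring}) to collapse the \bmt{} to the \bt{}. For $1 \leq k \leq n$, the stationary probability of having a $1$ at the last site of the \bt{} with $n$ sites and $k-1$ zeros equals the probability that the last site of the \bmt{} is occupied by some species $j \geq k$. Writing $p_k := \aver{1}_{n,k-1}$ and using the formula $\aver{1}_{n,n_0} = (n-n_0)/(2n)$ derived from Theorems~\ref{T:partitionB} and~\ref{T:Bprob} (as in the proof of Lemma~\ref{L:Corr-sums-B}), one obtains by telescoping
\[
\aver{i}_B = p_i - p_{i+1} = \frac{n-(i-1)}{2n} - \frac{n-i}{2n} = \frac{1}{2n}
\]
for $1 \leq i \leq n$ (with the convention $p_{n+1} = 0$). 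The case $i < 0$ is handled symmetrically, using the corresponding $k$-coloring and the fact that $\aver{\neg 1}_{n,n_0}$ also equals $(n-n_0)/(2n)$ for $n_0 \geq 1$, together with the boundary value $\aver{\neg 1}_{n,0} = (n-1)/(2n-1)$ which only enters when passing from $i = -1$ to $i = -2$; the contribution at $i = -n$ comes out correctly as well.

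There is no real obstacle here: all the analytic work sits in Lemma~\ref{L:Corr-sums-B} (and behind it in the partition function computations of Section~\ref{sec:bpart}). The only thing to be careful about is the bookkeeping near $i = \pm 1$, where the $0$-column in the lumped \bt{} interacts with the asymmetric boundary rules; but this is precisely the case analysis already performed in deriving the $\Col_i(n)$ formula, so nothing new arises.
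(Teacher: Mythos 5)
Your primary argument is exactly the paper's proof: the corollary is read off from the $\Col_i(n)$ line of Lemma~\ref{L:Corr-sums-B} together with the identity $\aver{i}_{B}=\Col_i(n)$, and inside that lemma the paper obtains $\Col_i(n)=1/(2n)$ by precisely the coloring/telescoping computation you sketch, using $\aver{1}_{n,n_0}=(n-n_0)/(2n)$ from Theorems~\ref{T:partitionB} and~\ref{T:Bprob}. On this route there is nothing to add.

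Your ``independent'' treatment of the negative species, however, mixes up two different marginals. The values $(n-n_0)/(2n)$ for $n_0\ge 1$ and $(n-1)/(2n-1)$ for $n_0=0$ that you invoke are the row sums $\aver{\1,\cdot}_{n,n_0}$, i.e.\ the probability that the \emph{penultimate} site of the \bt{} carries a $\1$; they are not the last-site probability $\aver{\1}_{n,n_0}$ that enters the telescoping for $\aver{\neg{i}}_{B}$. Had you telescoped with the numbers you quote, the $n_0=0$ value would enter at $i=-1$ (not ``when passing from $i=-1$ to $i=-2$'') and would yield $(n-1)/(2n-1)-(n-1)/(2n)=(n-1)/(2n(2n-1))$, which is $\Row_{-1}(n)$, not $1/(2n)$. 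The correct statement is that, by the lumping of Theorem~\ref{thm:lump}(2), the species $1$ and $\1$ have the same last-site probability in the \bt{} for every $n_0\ge 0$, so $\aver{\1}_{n,n_0}=(n-n_0)/(2n)$ with no boundary anomaly at $n_0=0$, and the same telescoping as in the positive case gives $\aver{\neg{i}}_{B}=1/(2n)$ for all $1\le i\le n$.
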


Let $\pi_B(w_1=i)$ be the probability of having a particle of species $i$ in the first site of the \bmt{} on $n$ sites.
Another interesting curiosity is that the coefficient of $e_{k}$ in the type $B$ limiting direction is twice $\mathbb{P}(w_1=-k)$ for $1 \le k \le n$, as we shall see next.

\begin{theorem}
Let $n \geq 2$. Then
\[
\pi_B(w_1=k)= 
\begin{cases}
\ds \frac{2|k|-1}{2n(2n-1)} &  -n \le k \le -1,\\
\ds \frac{n^2 + n - 1}{2n(2n - 1)} & k = 1,\\
\ds \frac{1}{2n} & 1 < k \le n.
\end{cases}
\]
\end{theorem}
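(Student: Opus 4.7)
The plan is to reduce the marginal at site $1$ of the \bmt{} to analogous marginals in the \bt{} via the $k$-coloring of Proposition~\ref{prop:kcoloring}, and then evaluate the latter using the two-row $D^*$ process of Section~\ref{sec:duchi-schaeffer}. Let $\tilde\pi_{n,n_0}(w_1=\sigma)$ denote the stationary probability that $w_1=\sigma$ in the \bt{} on $n$ sites with $n_0$ zeros (with the convention $\tilde\pi_{n,n}(w_1=\pm 1):=0$). The $k$-coloring gives, for $1\le k\le n$,
\begin{align*}
\pi_B(w_1=k)&=\tilde\pi_{n,k-1}(w_1=1)-\tilde\pi_{n,k}(w_1=1),\\
\pi_B(w_1=-k)&=\tilde\pi_{n,k-1}(w_1=\neg 1)-\tilde\pi_{n,k}(w_1=\neg 1).
\end{align*}

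Setting $N^{n,n_0}_\sigma:=Z^{\hatB}_{n,n_0}\cdot\tilde\pi_{n,n_0}(w_1=\sigma)$ for $\sigma\in\{1,0,\neg 1\}$, Theorem~\ref{thm:lump}(2) and Proposition~\ref{prop:tworow-lump} identify $N^{n,n_0}_\sigma$ with the sum of $q(\omega)$ over two-row configurations on $n+1$ columns with $n_0$ zeros, first column $\st$, and column $2$ having top entry $\sigma$. I claim
\[
N^{n,n_0}_0=\binom{2n-2}{n-n_0},\qquad N^{n,n_0}_{\neg 1}=\binom{2n-2}{n-n_0-1},\qquad N^{n,n_0}_1=\binom{2n-1}{n-n_0-1}
\]
(the last for $n_0\ge 1$). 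For $N^{n,n_0}_0$: deleting the $\xx$ at column $2$ is a weight-preserving bijection onto two-row configurations on $n$ columns with $n_0-1$ zeros starting with $\st$, since $n_z$ and $n_{z_*}$ depend only on the unchanged portion to the right, giving $N^{n,n_0}_0=Z^{\hatB}_{n-1,n_0-1}$ by Theorem~\ref{T:partitionB}. For $N^{n,n_0}_{\neg 1}$ with $n_0\ge 1$: condition on the length $k\ge 1$ of the first bicolored Motzkin segment starting at the fixed $\wb$ in column $2$; the remaining $k-1$ steps are arbitrary ($\catalan k$ choices, unweighted since this is not the last segment), followed by an $\xx$ column and a sub-configuration whose total weight is $Z^{\hatB}_{n-1-k,n_0-1}$ by the same bijection. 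Thus
\[
N^{n,n_0}_{\neg 1}=\sum_{k=1}^{n-1}\catalan k\binom{2(n-1-k)}{n-n_0-k},
\]
which evaluates to $\binom{2n-2}{n-n_0-1}$ by the convolution identity coming from $\sum_j\binom{2j+2Z+1}{j}x^j=C(x)^{2Z+1}/\sqrt{1-4x}$ together with $xC(x)^2=C(x)-1$, or combinatorially by applying Lemma~\ref{L:Cat2} after rewriting $\catalan k=C^k_k$. The case $n_0=0$ is handled separately via $N^{n,0}_{\neg 1}=2M_{n-2}(1/2)=\binom{2n-2}{n-1}$ (Lemma~\ref{L:Mk} and Pascal). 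Finally, $N^{n,n_0}_1=Z^{\hatB}_{n,n_0}-N^{n,n_0}_0-N^{n,n_0}_{\neg 1}$ simplifies to $\binom{2n-1}{n-n_0-1}$ by two applications of Pascal's rule.

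Substituting the closed forms into the telescoping differences gives the theorem: using $\binom{2n-1}{n-k}/\binom{2n}{n-k+1}=(n-k+1)/(2n)$ and $\binom{2n-2}{n-k}/\binom{2n}{n-k+1}=(n-k+1)(n+k-1)/(2n(2n-1))$, taking consecutive differences yields $\pi_B(w_1=k)=1/(2n)$ for $2\le k\le n$ and $\pi_B(w_1=-k)=(2k-1)/(2n(2n-1))$ for $1\le k\le n$. The case $k=1$ is handled separately because $N^{n,0}_1=\binom{2n}{n}-\binom{2n-2}{n-1}$ has a different structure from the $n_0\ge 1$ formula; computing $\tilde\pi_{n,0}(w_1=1)=(3n-2)/(2(2n-1))$ and subtracting $\tilde\pi_{n,1}(w_1=1)=(n-1)/(2n)$ yields $(n^2+n-1)/(2n(2n-1))$. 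The main obstacle is proving the convolution identity for $N^{n,n_0}_{\neg 1}$ in a manner consistent with the paper's combinatorial style; while the generating function argument is short, a direct ballot-number bijection in the style of Lemmas~\ref{L:Cat1} and~\ref{L:Cat2} would fit more naturally.
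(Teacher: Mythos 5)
Your proposal is correct and follows essentially the same route as the paper: reduce to the first-site marginals of the \bt{} via the $k$-coloring, evaluate those marginals in the two-row $D^*$ model, get the $1$-case by complementation, and telescope. The only real difference is cosmetic: the paper obtains $N^{n,n_0}_{\neg 1}=Z_{n-1,n_0}=\binom{2n-2}{n-n_0-1}$ in one step by noting that the positivity condition forces the column below a top-row $\neg 1$ at site $1$ to be $\wb$ (a fact you use implicitly when writing ``the fixed $\wb$'' and should state) and that deleting this weightless column is a bijection, whereas you reach the same number by a segment decomposition at the first $0$-column plus a Catalan convolution handled through Lemma~\ref{L:Cat2}.
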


\begin{proof}
We use the coloring argument, which gives $\pi_B(w_1=k) = {\aver 1}_{n,k-1} - {\aver 1}_{n,k}$ for $k \geq 1$. 
Similarly, $\pi_B(w_1=-k) = {\aver{\neg{1}}}_{n,k-1} - {\aver{\neg{1}}}_{n,k}$ for $k \geq 1$.

Consider the two-row $D^*$ process. If the second position of the top row contains a $\neg{1}$, then the bottom row must contain a 1 in the same position. Hence 
\[
{\aver{\neg{1}}}_{n,k} = \frac{\binom{2n-2}{n-k-1}}{\binom{2n}{n-k}}
\] 
by Theorem \ref{T:Bprob} since the 1 does not contribute any extra weight. 
This yields, after some computations, the claim for $\pi_B(w_1=-k)$, $1 \le k \le n$.
Now, for ${\aver 1}_{n,k}$ we use ${\aver 1}_{n,k} = 1 - {\aver 0}_{n,k} - {\aver{\neg{1}}}_{n,k}$. For $k > 0$, 
\[
{\aver 0}_{n,k} = \frac{\binom{2n-2}{n-k}}{\binom{2n}{n-k}},
\] 
and for $k = 0$, ${\aver 0}_{n,0}  = 0$. Using these formulas for different values of $k$, one can compute the remaining cases.
\end{proof}

\subsection{Partition function and correlations in \dt{}}
\label{sec:dpart}

In this section, we will denote the partition function for the \dt{} by $Z_{n,n_0}$.

\begin{theorem} 
\label{T:partitionD} 
For $n\ge n_0\ge 1$, the partition function of the \dt{} is 
\[
Z_{n,n_0}=\sum_{j=0}^{n-n_0}\binom{2j}{j}\binom{2n-2j-2}{n-j-n_0}.
\] 
For $n_0=0,1$, we get $Z_{n,n_0} =4^{n-n_0}$.
\end{theorem}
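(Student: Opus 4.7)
By Theorem~\ref{thm:lump}(3), the \dt{} lumps to the \dstar{} with $\alpha=\alpha_*=\beta=\beta_*=1/2$, whose stationary distribution is captured by the two-row $D^*$ process via Theorem~\ref{thm:dist_d}. I would therefore compute $Z_{n,n_0}$ by summing $q(\omega)$ over $\omega\in\widehat{\Omega}^*_{n,n_0}$, adapting the strategy used in the proof of Theorem~\ref{T:partitionB} but with both boundaries free (so each end can carry either a $0$-column or a $*$-column).

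For $n_0\ge 1$, I would decompose each two-row configuration by the positions $L\le R$ of the leftmost and rightmost $0$-columns. The segment to the left of position~$L$ is either the $0$-column at position~$1$ (when $L=1$) or a $*$-column followed by a bicolored Motzkin tail carrying the $y$-weights; analogously for the segment to the right of~$R$ with $z$-weights. The middle part (positions $L$ through $R$, containing all $n_0$ zeros) is enumerated by Lemma~\ref{L:endinzero}, and the weighted generating function for the tails is $M_k(1/2)=\binom{2k+1}{k}$ from Lemma~\ref{L:Mk}. Summing over the four endpoint types ($*/*$, $0/*$, $*/0$, $0/0$) and then manipulating the resulting convolutions via the ballot-number identities (Lemmas~\ref{L:Cat1} and~\ref{L:Cat2}) should yield the claimed double-index sum, with $j$ playing the role of the Motzkin length of the left tail.

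The case $n_0=1$ follows as a specialization: plugging $n_0=1$ into the general sum and invoking the classical central-binomial convolution $\sum_j\binom{2j}{j}\binom{2m-2j}{m-j}=4^m$ with $m=n-1$ collapses it to $4^{n-1}$. The case $n_0=0$ must be handled separately, since no $0$-columns appear and the formula does not extend cleanly. There, a two-row configuration is simply a closed bicolored Motzkin path of length~$n$ framed by two $*$-columns, whose total weighted count collapses to $4^n$ via $V_n(1/2,1/2)=2^{2n}$ from Lemma~\ref{L:Mk}.

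The principal technical obstacle is the mild asymmetry between the $y$- and $z$-labels (the ``no $z'$ to the left'' clause in the definition of $y$ has no counterpart on the right), which makes the weighted Motzkin tail on the left subtler than the one on the right. Carefully tracking these factors through the four endpoint cases and reducing the resulting four-fold sum to the single-parameter form in the theorem statement is what I expect to be the heart of the calculation; the ballot-number machinery already developed in Section~\ref{sec:bpart} should then handle the final simplification.
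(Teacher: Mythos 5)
Your plan for $n_0\ge 1$ is essentially the route the paper takes: split the two-row configurations according to whether each boundary column is a $0$-column or a $*$-column, enumerate the stretch between the extreme $0$-columns with Lemma~\ref{L:endinzero}, weight the star-side tails by $M_k(1/2)$ via Lemma~\ref{L:Mk}, and collapse the resulting convolutions with Lemma~\ref{L:Cat2}. The one step you explicitly defer -- the asymmetry between the $y$- and $z$-labels, which you rightly call the heart of the calculation -- is exactly what the paper dispatches in a single line: by Proposition~\ref{prop:ctdt-symm} the \dt{} is invariant under reversing the word and flipping signs, so the weighted count of the left tail equals that of a right tail, namely $2M_k(1/2)$, and the mixed $0/*$ case can be imported verbatim from Case~2 of the proof of Theorem~\ref{T:partitionB}. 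If you insist on tracking the labels directly, what you must show is that the left tail carries weight $V_k(\tfrac12,1)$ (blocking by $z'$ retained, but no $\beta$-weight) and that this equals $M_k(\tfrac12)=V_k(1,\tfrac12)$; this does follow from the manifest $\alpha\leftrightarrow\beta$ symmetry of the expansion in Lemma~\ref{L:Vk}, but it is a step you need to state and prove, not just gesture at.

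Two concrete problems remain. First, your $n_0=0$ case is off by two: by Theorem~\ref{thm:lump}(3) the \dt{} on $n$ sites corresponds to the two-row process on $n$ columns, so a zero-free configuration is a bicolored Motzkin path of length $n-2$ framed by the two $*$-columns, not one of length $n$. With the correct length the weighted count is $\frac{1}{\alpha_*\beta_*}V_{n-2}(\tfrac12,\tfrac12)=4\cdot 4^{n-2}=4^{n-1}$, so your argument does not actually produce the constant $4^{n}$ you claim to verify; this case has to be redone with care (the paper's own treatment is only a one-line citation of Lemma~\ref{L:Mk}, and the normalization used downstream in Lemma~\ref{L:Corr-sums-D} is the one consistent with $4^{n-1}$, so do not take the target constant on faith). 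Second, deducing the $n_0=1$ value by ``plugging $n_0=1$ into the general sum'' presupposes that your four-case derivation is valid at $n_0=1$, which it is not as stated: the $0/0$ case is empty, the middle segment between ``leftmost and rightmost zero'' degenerates to a single column, and Lemma~\ref{L:endinzero} is being invoked with shifted parameters that go out of range. The paper handles $n_0=1$ separately (Cases~2--4 with first zero equal to last zero) precisely for this reason; your plan should either do the same or verify explicitly that the decomposition argument, not just the final formula, survives at $n_0=1$.
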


\begin{proof} 
Recall that in the \dt{} we have $\alpha_* = \beta_* = \alpha = \beta = \frac{1}{2}$.
First, assume $n_0\ge 2$. We will appeal to the \dstar{} and use the lumping from Theorem~\ref{thm:lump}(3).
We divide configurations of the \dstar{} into four classes depending on the first and last sites.

\noindent
{\bf Case 1}: First we study all words starting and ending with 0. By Lemma \ref{L:endinzero},
 these configurations sum to $C_{n-n_0}^{n+n_0-3}$.

\noindent
{\bf Case 2}: The second case is all words starting with $0$ and ending in $*$. This is the same as the second case in the computation of the partition function $Z^B_{n-1,n_0-1}$ of the \bt{} in Theorem \ref{T:partitionB}, which is given by $2\binom{2n-3}{n-n_0-1}$.

\noindent
{\bf Case 3}: The third case is words starting with $*$ and ending in $0$, which by Case 2 and the symmetry in Proposition \ref{prop:ctdt-symm} also gives $2\binom{2n-3}{n-n_0-1}$. 

\noindent
{\bf Case 4}: The fourth case is words starting and ending with $*$. Let $i,j$ be the positions of the first and last zeros, respectively. The weight of the part to the right of the last zero is then $2M_{n-j-1} \left( \frac{1}{2} \right)$, and by the symmetry in Proposition~\ref{prop:ctdt-symm} the weight of the part to the left of the first zero is $2M_{i-2} \, \left( \frac{1}{2} \right)$. 
This gives
\begin{align*} 
&\sum_{i=2}^{n-n_0}\sum_{j=n_0+i-1}^{n-1} 2M_{i-2} \left( \frac{1}{2} \right) \, C_{j-i-1-(n_0-2)}^{j-i-1+(n_0-2)+1} 
\, 2M_{n-j-1} \left( \frac{1}{2} \right)\\ = 
&\sum_{i=2}^{n-n_0}\sum_{j=n_0+i-1}^{n-1} \binom{2i-2}{i-1} C_{j-i-1-(n_0-2)}^{j-i-1+(n_0-2)+1} \binom{2n-2j}{n-j}\\ = &\sum_{i=2}^{n-n_0} \binom{2i-2}{i-1} \binom{2(n-i)}{n-i-n_0+1}-\sum_{i=2}^{n-n_0} \binom{2i-2}{i-1} C_{n-i-n_0+1}^{n-i+n_0-2},
\end{align*} 
where the last equality is from Lemma \ref{L:Cat2}. To simplify the last sum for $n > n_0$ (both sums are 0 for $n = n_0$), we use again the same lemma which, after a suitable variable substitution, gives 
\[
\sum_{i=2}^{n-n_0} \binom{2i-2}{i-1} C_{n-i-n_0+1}^{n-i+n_0-2} 
=\binom{2n-2}{n-n_0}-C_{n-n_0}^{n+n_0-3}-\binom{2n-2n_0}{n-n_0}.
\]
Adding this result to those of Cases 1, 2 and 3, we get 
\begin{align*}
Z_{n,n_0} =&\sum_{i=1}^{n-n_0+1} \binom{2i-2}{i-1} \binom{2n-2i}{n-i-n_0+1}-\binom{2(n-n_0+1)-2}{n-n_0} \\
- &2\binom{2n-2}{n-n_0}
+2C_{n-n_0}^{n+n_0-3}+\binom{2n-2n_0}{n-n_0}+4\binom{2n-3}{n-n_0-1}.
\end{align*}
The last terms cancel and the theorem is proved for $n_0\ge 2$. Note that for $n = n_0$ the only contribution comes from Case 1 which gives the required result.

For $n_0=1$, Case 1 is empty but Case 2 and 3 are still $2\binom{2n-3}{n-2}=\binom{2n-2}{n-1}$ each. Case 4 is the same sum, with $i=j$, which becomes
$\sum_{i=2}^{n-1}\binom{2i-2}{i-1} \binom{2n-2i}{n-i}$. Together the cases give the desired formula which simplifies to $4^{n-1}$.

When $n_0=0$ this is given by Lemma \ref{L:Mk}, and that completes the proof.
\end{proof}

\begin{remark} 
One may rewrite the sum in Theorem \ref{T:partitionD} in several different ways, for example $Z_{n,n_0} =\sum_{j=0}^{n-n_0}\binom{2j-1}{j}\binom{2n-2j-1}{n-j-n_0}$, or as a recursion
$Z_{n,n_0} =4Z_{n-1,n_0} +C_{n-n_0}^{n+n_0-3}$. We do not need it here, but we note that the recursion is useful for proving a formula for a
generating function of the partition function:
\[
\sum_{n=n_0}^\infty Z_{n,n_0} t^{n}=\frac{t^{n_0}}{1-4t}\left(\frac{1-\sqrt{1-4t}}{2t}\right)^{2n_0-2}.
\] 
\end{remark}

Fix $n$ and $n_0$ such that $n_0 \leq n$.
Let, as before, ${\aver {i}_{n,n_0} }$ denote the stationary probability of having an $i$
Similarly, let ${\aver {i,j}_{n,n_0}}$ be the probability of having $i,j$ in the last two sites. 

\begin{theorem}  
\label{T:Dprob} 
For any $n\ge n_0\ge 1$, we have the following table for values of $Z_{n,n_0}\cdot\aver {a,b}_{n,n_0}$
 in the last two positions of the \dt{}.
\begin{center}
\renewcommand{\arraystretch}{2.5} 
\begin{tabular}{|c | c | c | }
\hline
$a\backslash b$ & $1\ \mathrm{or}\ \1$ & $0$ \\ 
\hline
$\1$ & $\sum_{j=2}^{n-n_0}\binom{2j-2}{j}\binom{2n-2j-2}{n-j-n_0}$ & $\displaystyle \binom{2n-3}{n-n_0-1}$ \\
\hline
$0$ & $\displaystyle \binom{2n-4}{n-n_0-1}$ &$ \displaystyle \binom{2n-4}{n-n_0}$ \\
\hline
$1$ & $\sum_{j=1}^{n-1-n_0}\binom{2j}{j}\binom{2n-2j-4}{n-j-n_0-1}$ & $\displaystyle \binom{2n-4}{n-n_0-1}$  \\
\hline
\end{tabular}
\end{center}
Note that having a $1$ in the last position has the same probability as having a $\1$.
\end{theorem}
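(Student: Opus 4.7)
My plan is to follow the template of the proof of Theorem~\ref{T:partitionD}. By Theorem~\ref{thm:lump}(3) the \dt{} lumps onto the \dstar{} with parameters $\alpha=\alpha_*=\beta=\beta_*=1/2$, and Corollary~\ref{cor:tworow} together with Theorem~\ref{thm:dist_d} then expresses each \dstar{} probability as a normalized sum of the weights $q(\omega)$ from \eqref{def-q} over the two-row $D^*$ configurations $\omega$. Consequently, $Z_{n,n_0}\cdot\aver{a,b}_{n,n_0}$ equals the sum of $q(\omega)$ over two-row configurations whose top row matches $(a,b)$ in positions $n-1,n$. The asserted equality between the ``$1$'' and ``$\1$'' columns I would establish directly: the sign-flip at position $n$ is a symmetry of the \dt{} Markov chain, since a quick check against Table~\ref{tab:dt-transitions} shows that this map pairs up the last-two-sites transitions (while commuting trivially with the bulk and first-two-sites transitions), hence the unique stationary distribution is invariant under it.

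For each of the six distinct entries I would split into cases according to whether column $1$ of $\omega$ is a $0$-column or a $*$-column, and according to the column shapes at positions $n-1$ and $n$ (each of which is forced up to at most a binary choice between $\bw$ and $\wb$). When the last column is a $0$-column (the $(a,0)$ entries), the only labels that can appear are $y$ and $y_*$ on the left end, and Lemma~\ref{L:endinzero} enumerates configurations in the $0$-bounded segment directly. When the last column is a $*$-column (the entries with last entry in $\{1,\1\}$), we pick up a factor $1/\beta_*=2$, and in the case $a=\1$ the balance condition forces the bottom row at position $n-1$ to be $1$, contributing an additional $z$-label and hence a further factor $1/\beta=2$; this matches the argument used in the proof of Theorem~\ref{T:Bprob}. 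The enumeration then proceeds by splitting on the position of the rightmost $0$-column, applying L:endinzero to the $0$-bounded segment on the left and Lemma~\ref{L:Mk} to the bicolored Motzkin segment to the right of the rightmost $0$-column.

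The principal technical obstacle is the careful bookkeeping of labels and weights across the multiple sub-cases, especially in the $(1,b)$ and $(\1,b)$ entries which mix $*$-column and $0$-column contributions at the two ends. Once the enumeration is assembled, each entry becomes a finite sum over the positions of internal $0$-columns of a product of binomial coefficients; the identities in Lemmas~\ref{L:Cat1} and~\ref{L:Cat2} should then collapse the sums to the closed forms in the table, exactly as in the proof of Theorem~\ref{T:partitionD}. A secondary complication is handling small-$n_0$ edge cases (notably $n_0\le 2$), where several sub-cases are empty and the formulas degenerate; these will need to be verified by hand, paralleling the separate treatment of $n_0=0,1$ in Theorem~\ref{T:partitionD}.
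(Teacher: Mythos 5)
Your overall framework is sound: lumping the \dt{} to the \dstar{} with $\alpha=\alpha_*=\beta=\beta_*=1/2$ and computing unnormalized weights through the two-row process is the right setting, and your involution argument (flipping the sign of the entry at site $n$ pairs up the last-two-sites transitions in Table~\ref{tab:dt-transitions} and commutes with the rest, so the unique stationary distribution is invariant) is an acceptable substitute for the paper's one-line appeal to the lumping of Theorem~\ref{thm:lump} for the equality of the $1$ and $\1$ columns. The genuine gap is in your structural analysis at the right end. You claim that when $a=\1$ and column $n$ is a $*$-column, the balance condition forces the bottom entry of column $n-1$ to be $1$, producing a $z$-label and a factor $1/\beta=2$. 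This is false: the column at position $n-1$ may be $\ww$, its two $\1$'s being matched by earlier $1$'s in the same segment; for instance $\st\bb\ww\st\in\widehat{\Omega}^*_{4,0}$ and, within the range of the theorem, $\xx\bb\ww\st\in\widehat{\Omega}^*_{4,1}$. The forcing you have in mind goes the other way: if the \emph{top} of column $n-1$ is $1$ and column $n$ is a $0$- or $*$-column, then a $\bb$ there would leave two unmatched $1$'s, so that column must be $\bw$ with bottom $\1$ --- this is precisely the argument used for $\aver{1,\1}$ in the proof of Theorem~\ref{T:Bprob}, which you have transferred to the wrong case. Moreover a bottom-row $1$ never carries a $z$-label, and the $\1$'s of a $\ww$ column lie inside a block and carry no label, so the weight bookkeeping attached to your claim is also off; likewise your parenthetical that each of the last two columns is ``forced up to a binary choice between $\bw$ and $\wb$'' is not right (the choice is $\bb$ versus $\bw$ when the top is $1$, and $\wb$ versus $\ww$ when the top is $\1$). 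As written, your direct enumeration therefore breaks exactly on the entry $a=\1$, $b\in\{1,\1\}$, which is the one whose answer is the genuinely more complicated sum $\sum_{j\ge 2}\binom{2j-2}{j}\binom{2n-2j-2}{n-j-n_0}$. Also keep in mind that the factor $1/\beta_*=2$ from the terminal $*$-column cancels when you pass from ``$*$ at site $n$'' to a specific sign there, as the paper notes in the proof of Theorem~\ref{T:Bprob}; your sketch risks a factor-of-two error here.

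For comparison, the paper avoids entry-by-entry enumeration altogether. It reads off the one-point weights $Z_{n,n_0}\aver{0}_{n,n_0}=\binom{2n-2}{n-n_0}$ and $Z_{n,n_0}\aver{\1}_{n,n_0}$ from the four cases in the proof of Theorem~\ref{T:partitionD}; the entries $(0,0)$, $(1,0)$, $(0,\1)$ then follow from $\aver{0}$ with $(n,n_0)$ shifted, the entry $(1,\1)$ comes from $\aver{\1}$ at size $n-1$ together with the factor coming from $\beta=1/2$, and the remaining entries $(\1,0)$ and $(\1,\1)$ are obtained by subtracting from the one-point weights; the $1$ versus $\1$ equality at the last site is deduced from the lumping. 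If you want to keep your direct approach, you must repair the $a=\1$ case by splitting on $\wb$ versus $\ww$ at column $n-1$ and redoing the sums with Lemmas~\ref{L:endinzero}, \ref{L:Mk} and \ref{L:Cat2}; otherwise the paper's subtraction shortcut is shorter and much less error-prone.
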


\begin{proof} 
Using the cases in the proof of Theorem \ref{T:partitionD}, 
we first compute $\aver{ 0}_{n,n_0} =\binom{2n-2}{n-n_0}$ and 
\[
\aver{ \1}_{n,n_0} =\sum_{j=0}^{n-n_0}\binom{2j}{j}\binom{2n-2j-2}{n-j-n_0}-\binom{2n-2}{n-n_0}.
\] 
Clearly $\aver{ 0,0}_{n,n_0} ,\aver{ 1,0}_{n,n_0} $ and $\aver{ 0,\1}_{n,n_0} $ can be directly deduced from $\aver{ 0}_{n,n_0} $ with an appropriate shift of $n$ and $n_0$. Then use $\aver{ \1,0}_{n,n_0} =\aver{ 0}_{n,n_0} -\aver{ 0,0}_{n,n_0} -\aver{ 1,0}_{n,n_0} $. Note that $\aver{ 1,\1}_{n,n_0} $ must (because of $\beta = 1/2$) be twice $\aver{\1}_{n-1,n_0}$ and $\aver{ \1,\1}_{n,n_0} =\aver{ \1}_{n,n_0} -\aver{ 0,\1}_{n,n_0} -\aver{ 1,\1}_{n,n_0} $.
The equality of probabilities replacing $\1$ with $1$ in the last position follows from the lumping in Theorem \ref{thm:lump}.
 \end{proof}

\subsection{Limiting direction for type $\D$}
\label{sec:limD}

In this section, we will follow the strategy of Section~\ref{sec:limB} and prove formulas for the two-point correlation for the \dmt{}
using the two-point correlations in Section \ref{sec:bpart} for the \dt{}.
As before, let $\aver i_{D}$ and $\aver {i,j}_{D}$ mean the stationary probability  in the \dmt{} of having a particle of species $i$ at the last site and the last two sites, respectively. 
We will continue to use $Z_{n,n_0}$ for the partition function of the \dt{} with $n$ sites and $n_0$ $0$'s.

The limiting direction for the \dmt{} can now be determined from Theorem \ref{T:Dprob} in the same way as the limiting direction for the \bmt{} in Section \ref{sec:limB} since both have the same expression. Again, define
\begin{align*}
\Col_i(n) &:=\sum_{j=-n}^n \aver {j,i}_{D},\quad -n\le i\le n\\
\Row_i(n) &:=\sum_{j=-n}^n \aver {i,j}_{D},\quad -n\le i\le n \\
\Hd_i(n) &:=\sum_{j=i+1}^n \aver {i,-j}_{D}+ \aver {j,-i}_{D},\quad 1\le i\le n \\
\Hu_i(n) &:=\sum_{j=i+1}^n \aver {-j,i}_{D}+ \aver {-i,j}_{D},\quad 1\le i\le n.
\end{align*}

\begin{proof}[\textbf{Proof} of Theorem~\ref{thm:lim-dir-d}]
The formal expression for the limiting direction for type $D$ is \eqref{eq:limB}, that is, the same as that of type \hatB{}. This follows from that the highest root is the same and that inversions are 
computed in similar ways for $B_n$ and $D_n$. Thus the coefficient of the root $e_k$ in the limiting direction $\psi$ is again given by
\begin{multline}
\sum_{j=-k+1}^n \big( \aver {j,k}_{D} +\aver {k,j}_{D} \big) - \sum_{j=k+1}^n \big( \aver {j,-k}_{D} + \aver {-k,j}_{D} \big) \\
= \Row_k(n)-\Hd_k(n)+\Col_k(n)-\Hu_k(n).
\end{multline}
Plugging in the formulas in Lemma \ref{L:Corr-sums-D} for $k = 1$ and $2 \leq k \leq n$ proves the theorem. The case $n = 2$ can be checked separately.
\end{proof}

\begin{lemma}
\label{L:Corr-sums-D}
For any $n\ge 3$, we have in the \dmt{}
\begin{align*}
\Col_i(n) &= 
\begin{cases}
\frac{\binom{2n-2}{n-1}}{2^{2n-1}} & \abs{i} = 1,\\[0.2cm]
\frac{\binom{2n-2}{n-i}}{2Z_{n,i} } - \frac{\binom{2n-2}{n-i+1}}{2Z_{n,i-1} } & 1 < \abs{i} \le n.
\end{cases}
\end{align*}
\[
\Row_i(n) = 
\begin{cases}
\ds \frac{\binom{2n-4}{n-2}}{4^{n-1}}  & i = 1,\\[0.2cm]
\ds \frac{\binom{2n-3}{n-2}}{4^{n-1}} & i=-1,\\[0.2cm]
\ds \frac{2\sum_{j=2}^{n-i+1}\binom{2j-2}{j}\binom{2n-2j-2}{n-j-i+1} + \binom{2n-3}{n-i}}{Z_{n, i-1} } \\[0.2cm]
\quad - \ds \frac{2\sum_{j=2}^{n-i}\binom{2j-2}{j}\binom{2n-2j-2}{n-j-i}+\binom{2n-3}{n-i-1}}{Z_{n, i} } & -n \le i < -1,\\[0.2cm]
\ds \frac{2\sum_{j=1}^{n-i}\binom{2j}{j}\binom{2n-2j-4}{n-j-i} + \binom{2n-4}{n-i}}{Z_{n, i-1} }\\[0.2cm] 
\quad - \ds \frac{2\sum_{j=1}^{n-i-1}\binom{2j}{j}\binom{2n-2j-4}{n-j-i-1}+\binom{2n-4}{n-i-1}}{Z_{n, i} } & 1 < i \le n.
\end{cases}
\]
\[
\Hd_i(n) = 
\begin{cases}
\ds \frac{\binom{2n-4}{n-2}}{4^{n-1}}  & i=1,\\[0.2cm]
\ds \frac{\sum_{j=1}^{n-i}\binom{2j}{j}\binom{2n-2j-4}{n-j-i}}{Z_{n, i-1} } \\[0.2cm]
\quad - \ds\frac{\sum_{j=1}^{n-i-1}\binom{2j}{j}\binom{2n-2j-4}{n-j-i-1}}{Z_{n, i} }  & 2 \leq i \leq n.
\end{cases}
\]
\[
\Hu_i(n) = 
\begin{cases}
\ds \frac{\binom{2n-3}{n-2}}{4^{n-1}} & i=1,\\[0.2cm]
\ds \frac{\sum_{j=2}^{n-i+1}\binom{2j-2}{j}\binom{2n-2j-2}{n-j-i+1}}{Z_{n, i-1} } \\[0.2cm]
\quad - \ds\frac{\sum_{j=2}^{n-i}\binom{2j-2}{j}\binom{2n-2j-2}{n-j-i}}{Z_{n, i} }  & 2 \leq i \leq n.
\end{cases}
\]
\end{lemma}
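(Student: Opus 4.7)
My plan is to mirror the strategy used for type $\hatB$ in Lemma~\ref{L:Corr-sums-B}, but with the inputs from Theorem~\ref{T:Dprob} and Theorem~\ref{T:partitionD} replacing those for the \bt{}. The central device is the $k$-coloring lumping of Proposition~\ref{prop:kcoloring}, which projects the \dmt{} on $n$ sites to the \dt{} on $n$ sites with $k-1$ zeros by sending species $\geq k$ to $1$, species $\leq -k$ to $\1$, and everything else to $0$. Under this map, each of the four quantities $\Col_i$, $\Row_i$, $\Hd_i$, $\Hu_i$ becomes a telescoping difference of two-species correlations in the \dt{}, which can then be evaluated explicitly from the table in Theorem~\ref{T:Dprob} together with the partition function identity in Theorem~\ref{T:partitionD}.

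More concretely, for $\Col_i(n)$ with $i\ge 2$ one has $\Col_i(n)=\aver{1}_{n,i-1}-\aver{1}_{n,i}$. Here the reflection symmetry $\aver{1}_{n,n_0}=\aver{\1}_{n,n_0}$ inherited from the \dstar{} lumping in Theorem~\ref{thm:lump}(3) gives $\aver{1}_{n,n_0}=\tfrac12(1-\aver{0}_{n,n_0})$, and summing the $b=0$ column of the table in Theorem~\ref{T:Dprob} then collapsing by Pascal produces $Z_{n,n_0}\aver{0}_{n,n_0}=\binom{2n-2}{n-n_0}$, yielding the stated ratio. By the same coloring, $\Row_i(n)=\aver{1,\cdot}_{n,i-1}-\aver{1,\cdot}_{n,i}$ for $i\ge 1$ (and analogously with $\aver{\1,\cdot}$ for $i\le -2$), where $\aver{1,\cdot}_{n,n_0}$ is read off as the sum of the third row of the table in Theorem~\ref{T:Dprob}. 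Since the species labels in the \dmt{} are a signed permutation with exactly one of $\pm i$ present, we have $\aver{-i,i}_D=0$ and $\aver{i,-i}_D=0$, so $(i+1)$- versus $i$-coloring give $\Hd_i(n)=\aver{1,\1}_{n,i-1}-\aver{1,\1}_{n,i}$ and $\Hu_i(n)=\aver{\1,1}_{n,i-1}-\aver{\1,1}_{n,i}$ for $i\ge 2$, into which the $(1,\1)$ and $(\1,1)$ entries of Theorem~\ref{T:Dprob} can be substituted directly to recover the sum-of-binomials formulas in the lemma.

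The main obstacle, and the reason the lemma is stated with a case split at $|i|=1$, is that Theorem~\ref{T:Dprob} only covers $n_0\ge 1$: the $|i|=1$ expressions involve the \dt{} with no zeros. For these we fall back to Theorem~\ref{T:partitionD}, which gives $Z_{n,0}=4^n$, and to the two-row $D^*$ analysis of Section~\ref{sec:duchi-schaeffer}, where configurations without zeros are essentially bicolored Motzkin paths bookended by $*$-columns, so the required correlations $\aver{a,b}_{n,0}$ and the marginal $\aver{1,\cdot}_{n,0}$ can be computed via the identities for $M_k(1/2)$ and $V_k(1/2,1/2)$ in Lemma~\ref{L:Mk}. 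Combining these boundary inputs with the telescoping arguments above, and recording that $\aver{1}_{n,0}=\aver{\1}_{n,0}=\tfrac12$ by the reflection symmetry, yields the $|i|=1$ formulas (with $\Col_1(n)$ simplifying to $\binom{2n-2}{n-1}/2^{2n-1}$ after replacing $Z_{n,1}=4^{n-1}$). Finally, the case $n=2$ is degenerate and is checked separately, which is why the lemma assumes $n\ge 3$; Theorem~\ref{thm:lim-dir-d} then absorbs $n=2$ as the exceptional value noted in its statement.
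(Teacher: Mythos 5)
Your proposal matches the paper's proof, which is literally a one-line pointer to redoing the Lemma~\ref{L:Corr-sums-B} computation with Theorems~\ref{T:partitionD} and \ref{T:Dprob} as inputs; your telescoping via the $k$-coloring lumping, the use of $\aver{i,-i}_D=0$, and your separate treatment of the zero-free boundary case needed for $|i|=1$ are exactly what that computation amounts to. One small caution: the value $Z_{n,0}=4^{n}$ you quote from Theorem~\ref{T:partitionD} is neither needed nor consistent with the two-row count (two $*$-columns of weight $2$ each times $V_{n-2}(\tfrac12,\tfrac12)=4^{n-2}$ gives $4^{n-1}$), so obtain the $n_0=0$ inputs from the symmetry argument ($\aver{1}_{n,0}=\aver{\1}_{n,0}=\tfrac12$, and the analogous $\tfrac12$ and $\tfrac14$ values for the site-$(n-1)$ marginal and the two-site correlations) rather than by normalizing with that stated partition function.
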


\begin{proof} 
The proof is more or less the same computation as the proof of Lemma \ref{L:Corr-sums-B}, using Theorems \ref{T:partitionD} and \ref{T:Dprob} instead.
\end{proof}

\section*{Acknowledgements}

We thank Anne Schilling for useful discussions.
We thank Anne Schilling for useful discussions.
AA was partially supported by Department of Science and Technology grant EMR/2016/ 006624 and by the UGC Centre for Advanced Studies. SL and SP were supported by the Swedish Research Council grant 621-2014-4780.
This material is based upon work supported by the Swedish Research Council under grant no.~2016-06596 while AA, SL and SP were in residence at the Institut Mittag-Leffler in Djursholm, Sweden during the January-April semester in 2020.

\bibliographystyle{alpha}
\bibliography{asep}

\end{document}